\newcommand{\beq}{\begin{equation}}
\newcommand{\bey}{\begin{eqnarray}}
\newcommand{\beyy}{\begin{eqnarray*}}
\newcommand{\eeq}{\end{equation}}
\newcommand{\eey}{\end{eqnarray}}
\newcommand{\eeyy}{\end{eqnarray*}}
\newcommand{\N}{{\mathbb N}}
\newcommand{\myspace}{\qquad\qquad\qquad}
\DeclareMathOperator{\wlim}{w-lim}
\newcommand{\cA}{{\mathcal A}}
\newcommand{\cD}{{\mathcal D}}
\newcommand{\cL}{{\mathcal L}}
\newcommand{\cM}{{\mathcal M}}
\newcommand{\cT}{{\mathcal T}}
\newtheorem{theorem}{Theorem}[section]
\newtheorem{lemma}[theorem]{Lemma}
\newtheorem{proposition}[theorem]{Proposition}
\newtheorem{remark}[theorem]{Remark}
\newtheorem{remarks}[theorem]{Remarks}
\newtheorem{assumption}[theorem]{Assumption}
\newtheorem{assumptions}[theorem]{Assumptions}
\newtheorem{corollary}[theorem]{Corollary}
\newtheorem{question}[theorem]{Question}
\newtheorem{claim}[theorem]{Claim}
\newtheorem{problem}[theorem]{Problem}
\numberwithin{equation}{section}
\begin{document}


\title{\bf \Large A theory of the infinite horizon LQ-problem for composite systems of 
PDEs with boundary control\footnote{The research of P.~Acquistapace was partially supported 
by the Universit\`a di Pisa.
The research of F.~Bucci was partially supported by the Universit\`a degli Studi di Firenze under the Project {\em Calcolo delle variazioni e teoria del controllo} and by the Italian MIUR 
under the PRIN~2009KNZ5FK Project ({\em Metodi di viscosit\`a, geometrici e di controllo per modelli diffusivi nonlineari}).
The research of I.~Lasiecka was partially supported by the NSF~Grant DMS-0606682 and AFOSR~Grant 
FA9550-09-1-0459.}
}

\author{
Paolo Acquistapace\\
\footnotesize{Universit\`a di Pisa}\\
\footnotesize{Pisa, Italy}\\
%
\and 
Francesca Bucci\\
\footnotesize{Universit\`a degli Studi di Firenze}\\
\footnotesize{Firenze, Italy}\\
%
\and
Irena Lasiecka\\
\footnotesize{University of Virginia}\\
\footnotesize{Charlottesville VA, U.S.A.}\\
}

\date{\empty}

\maketitle

\begin{abstract}
We study the infinite horizon Linear-Quadratic problem and the associated algebraic Riccati equations for systems with unbounded control actions. The operator-theoretic context is motivated by composite systems of Partial Differential Equations (PDE) with boundary or point control. Specific focus is placed on systems of coupled hyperbolic/parabolic PDE with an overall `predominant' hyperbolic character, such as, e.g., some models for thermoelastic or fluid-structure interactions.
While unbounded control actions lead to Riccati equations with unbounded (operator) coefficients,
unlike the parabolic case solvability of these equations becomes a major issue, owing to the lack 
of sufficient regularity of the solutions to the composite dynamics.
In the present case, even the more general theory appealing to estimates of the singularity displayed by the kernel which occurs in the integral representation of the solution to the control system fails.
A novel framework which embodies possible hyperbolic components of the dynamics has been introduced
by the authors in 2005, and a full theory of the LQ-problem on a finite time horizon has been developed. The present paper provides the infinite time horizon theory, culminating in well-posedness of the corresponding (algebraic) Riccati equations. New technical challenges are encountered and new tools are needed, especially in order to pinpoint the differentiability of the optimal solution.
The theory is illustrated by means of a boundary control problem arising in thermoelasticity. 
\end{abstract}

\section*{Introduction}
{\bf Historical background and motivation. }
The theory of the optimal control problem with coercive quadratic functionals for abstract 
linear systems $y'=Ay+Bu$---in spaces of finite or infinite dimensions---is intrinsically 
linked to well-posedness of the corresponding algebraic/differential Riccati equations.
As it is well known, by solving these celebrated equations one obtains the operator (a matrix, in the finite dimensional case) which occurs in the feedback representation of the optimal control as well as in the quadratic form which yields the optimal cost.

In an infinite dimensional setting, if $B$ is a {\em bounded} control operator from the control space $U$ to the state space $Y$, then a complete theory of the Riccati equations has been developed, where the analogs of all the key properties known in the classical finite dimensional context hold true.
Namely, the unique solution to the Riccati equation provides the optimal cost operator, the feedback representation of the optimal control holds true, and the closed loop equation yields the synthesis of the optimal control; see \cite{bala} for pioneering results in this direction, \cite{barbuc}, \cite{bddm} and \cite{curtain-zwart}. 

When it comes to the abstract representation of initial/boundary value problems for evolutionary Partial Differential Equations (PDE) in a bounded domain $\Omega\subset \mathbb{R}^n$, with control action exercised on the boundary $\partial\Omega$ or pointwise in the interior of the domain, the major technical challenges come from the {\em unboundedness} of the control operator $B$ which naturally arises from the modeling of the PDE problem. 
This, in turn, results in the possible unboundedness of the gain operator $B^*P$ which occurs 
in the quadratic term of the Riccati equations. 
In fact, this operator may be even {\em not densely defined}.
Indeed, this `pathology' is shown to happen even in the case of simple hyperbolic equations
with point control; see \cite{weiss-zwart}, \cite{triggiani}.

In this respect, the regularity properties of the pair $(A,B)$ which describes the free dynamics
and the control action or, more specifically, of $e^{At}B$ (where $e^{At}$ is the semigroup governing the free dynamics) are absolutely central.
And in fact, historically speaking, the first extensions of the LQ-problem theory for systems 
with bounded control operator to the present setting 
pertained to parabolic-like dynamics, where the underlying semigroup $e^{At}$ is {\em analytic}.
(Canonical illustrations are the heat equation with boundary or point control, as well as structurally damped plate equations, and certain thermoelastic systems.)
Analyticity made it possible the full development of a theory of the Riccati equations, with 
unbounded gains as well as without  additional regularity properties of the optimal cost operator.
These results date back to the beginning of the eighties (\cite{las-trig-analytic});
see \cite{las-trig-lecturenotes} for a concise treatise, or, alternatively, the 
extended monographs \cite{bddm} and \cite{las-trig-encyclopedia}. 
The analytic theory of Riccati equations with {\em non-autonomous} coefficients has been 
developed in \cite{a1}, \cite{a2}, \cite{a3,a4}.

In contrast, as it was pointed out already, the lower regularity exhibited by $e^{At}B$
in the case of hyperbolic-like equations with boundary/point control is in general insufficient 
to guarantee the sought-after properties recorded above. 
(Canonical examples are provided by PDE which display some kind of `wave propagation',
even in one dimension, under unbounded control actions.)
Thus, a requirement of an appropriate {\em smoothing effect} of the observation operator is 
called for, in order to establish well-posedness of the differential Riccati equations. 
See, again, \cite{las-trig-lecturenotes}, \cite{bddm}, \cite{las-trig-encyclopedia},
and the more recent \cite{barbu-las-trig}. 

\smallskip
The first break with analyticity and additional assumptions on the observation operator was introduced by G.~Avalos in 1996 in the study of PDE models that arise in structural acoustics; 
see \cite{avalos-96,avalos-las-1996}.
In these problems the equation for the acoustic waves is coupled on an interface with a 
parabolic-like equation describing the structural vibrations of an elastic wall and 
therefore the systems under consideration comprise both hyperbolic and parabolic dynamics. 
The study carried out in \cite{avalos-las-1996} pinpoints and establishes a fundamental ({\em singular}) estimate for the norm of the operator $e^{At}B$ in the space $\cL(U,Y)$;
this is recognized as the actual mathematical property which {\em by itself}---in the absence of analyticity---is sufficient to ensure boundedness of the gain operator. 
It is worth noting that while the aforesaid `singular estimates' are intrinsic to control systems whose free dynamics is governed  
by an analytic semigroup $e^{At}$, in the case of general $C_0$-semigroups these estimates correspond to suitable (non trivial) interior regularity results for the solutions of the {\em uncontrolled} system of PDE.   

This path of investigation has been followed later on, bringing about a theory of the LQ-problem
on both a finite and infinite time horizon, while other significant PDE systems yielding `singular estimates' have been discovered and analyzed; see \cite{cbms,las-trento}, \cite{las-trig-se-1,las-trig-se-2}, and \cite{las-tuff-2008} (dealing with the Bolza problem).
It turned out that the class of control systems characterized by this property---although inspired by the model for acoustic-structure interactions studied in \cite{avalos-las-1996}---covers a variety of systems of coupled hyperbolic/parabolic PDE, including different structural acoustic models, thermoelastic plate models, composite (sandwich) beams models, and others; see, e.g.,
\cite{cbms}, \cite[Section~4]{las-trig-se-2}, \cite[Lecture~III, Part~II]{las-trento}, \cite{bucci-las-thermo}, \cite{las-tuff-2009}.
(See also \cite{bucci-jee} for further applications of singular estimates to the study of 
semilinear evolution equations with nonlinear boundary terms.)

In fact, {\em interactions} between distinct physical phenomena widely occur in both nature and technology. 
To name a few relevant ones, we just recall---besides thermoelastic and acoustic-structure
interactions---fluid-solid or magnetoelectric interactions. 
The modeling of such interactions leads in a natural way to composite systems of PDE comprising 
components which display different dynamical behaviours.
In a mathematical setting, such evolutionary systems of coupled PDE can be represented 
in a simple form by the following abstract system, which also takes into account the possible influence of control actions:
\begin{equation*}
\begin{cases}
y_t = A_1 y + B_1 u + C_1(y,z)  & \quad y|_{t=0}=y^0\in H_1
\\
z_t = A_2 z + B_2 v + C_2(z,y) & \quad z|_{t=0}=z^0\in H_2
\end{cases},
\end{equation*}
where $H_1$ and $H_2$ are appropriate Hilbert spaces, $A_1$ and $A_2$ are infinitesimal
generators of $C_0$-semigroups $e^{A_i t}$ on $ H_i$, $i=1,2$, respectively, with e.g.
$e^{A_1 t}$ assumed to be {\em analytic}. 
The control operators $B_i$, $i=1,2$, are {\em unbounded} operators acting from the
control spaces $U_i$ into $H_i$ (or from $U_i$ into $[D(A^*_i)]'$); the coupling between the two different dynamics may occur as well by means of unbounded operators $C_i$. 

It is then natural to seek the proper set of assumptions on the overall dynamics operator in order to fully exploit the diverse features of the components of the system. 
The ultimate goal is to produce a rigorous theory which yields the feedback synthesis of the optimal control by means of a solution to the (nonlinear) operator Riccati equation, 
which must be shown to be well posed. 
The unboundedness of the control operator, while being a natural and prominent feature of the 
problem, provides the main mathematical challenge to achieve a proper definition of the gain operator, which may not have a sufficiently rich (and hence, meaningful) domain. 
On the other hand, the framework of control systems characterized by singular estimates turned out to be too rigid and failing to encompass many `mixed' dynamics that are of fundamental importance 
in the applications. 

A prime illustration of this fact is a benchmark thermoelastic plate model studied in 
\cite{bucci-las-thermo}. 
The distinct control-theoretic properties exhibited by its abstract representation, according to different sets of mechanical/thermal boundary conditions, revealed that the singular estimate 
which pertains to its parabolic component (that is the equation for the temperature distribution in the plate) does not always fully `propagate' to the entire system. 
Under certain boundary conditions, the coupling brings about a more involved behaviour of the (controlled) coupled dynamics. 
More precisely, in contrast with the case of {\em hinged} boundary conditions, when the system is supplemented with {\em clamped} (mechanical) boundary conditions and is subject to Dirichlet thermal control, the coupled dynamics precludes the validity of a singular estimate 
(for the norm of the operator $e^{At}B$, as a bounded operator from the control space $U$ into 
the state space $Y$); see \cite{bucci-las-thermo} and \cite{abl-1}.

It was indeed this initial/boundary value problem for the system of thermoelasticity to provide our original motivation for introducing in \cite{abl-2} a novel class of abstract linear control systems---which includes the one discussed above as a very special case. 
This class is characterized by a suitable decomposition of the operator $e^{At}B=F(t)+G(t)$, 
where the only component $F(t)$ satisfies a singular estimate, while the component $G(t)$ 
exhibits appropriate regularity properties which account for the (predominant) hyperbolic character and the parabolic one; see \cite[Hypothesis~2.2]{abl-2} for full details.
\\
The goal was then to develop a sufficiently general abstract set-up which could 
encompass significant interconnected PDE systems comprising both hyperbolic and parabolic components, and yet lacking the overall `parabolic-like' character disclosed
by a singular estimate. 
Moreover, we aimed at developing a corresponding theory of the quadratic optimal control problem which escaped restrictions on the observation operator in order to achieve 
well-posedness of the Riccati equations, unlike the case of {\em purely hyperbolic} problems.

\smallskip
Thus, a theory of the LQ-problem for the class of systems described above (and mathematically defined by Hypothesis~2.2 in \cite{abl-2}) in the {\it finite time horizon case} has been 
developed in \cite{abl-2}.
The major novel features of this optimal control theory are shortly outlined 
below; the reader is referred to \cite[Theorem~2.2]{abl-2} for a complete 
description of the obtained results.

First of all, in contrast with previous theories the gain operator $B^*P(t)$ is 
{\it only} densely defined in the state space $Y$ 
(it is {\em bounded} on $\cD(A^\epsilon)$, with arbitrarily small $\epsilon$). 
This turns out to be sufficient to obtain well-posed Differential Riccati equations 
\begin{equation*}
\begin{aligned}
& \frac{d}{dt}(P(t)x,z)_Y+ (A^*P(t)x,z)_Y+(P(t)Ax,z))Y-(B^*P(t)x,B^*P(t)z)_U
\label{e:riccati2-eq} \\[1mm]
& \myspace +(R^*Rx,z)_Y=0 \quad \textrm{for any $x,z\in \cD(A)$.}
\end{aligned}
\end{equation*}
Secondly, solvability of the corresponding quadratic optimal control problems
follows without assuming smoothing properties of the observation operator $R$.
More precisely, the weak requirement on $R$---that is the same as condition \eqref{e:keyasR}  
of Hypothesis~\ref{h:ip2}---allows bounded operators which, roughly, just `maintain regularity', 
such as the {\em identity operator}. 
Consequently, natural quadratic functionals such as the ones which involve the integral of the physical energy of the system are allowed for the first time.
 
Model-specific analyses needs to be carried out on diverse PDE problems, in order to establish the regularity of boundary traces that is required in order to apply the introduced abstract theory
(of the LQ-problem on a finite time interval).
This has been achieved, indeed, in the case of boundary control problems for acoustic-structure 
and fluid-structure interactions as well; see \cite{bucci-applicationes} and 
\cite{bucci-las-cvpde,bucci-las-dcds_s}, respectively. 

\medskip
{\bf Present work. } 
The purpose of this paper is to complement the LQ-theory described above by developing a 
complete {\it infinite time horizon} analysis. 
The task is not straightforward owing to a natural mixing of singularities occurring
in short and long time. 
The interplay between the long time stability for the forward problem and the short time 
development of singularities for the adjoint problem lie at the heart of the problem. 

We emphasize at the outset that we establish solvability of the optimal control problem, 
as well as well-posedness of the corresponding Algebraic Riccati equations,
under {\em minimal} assumptions on the operators involved.
More precisely, we work on the very same abstract framework set forth in the finite time horizon case, that is the one defined by \cite[Hypothesis~2.2]{abl-2} for the operator $e^{At}B$ 
and the observation operator $R$ which occurs in the cost functional \eqref{e:funct}.
The finite time horizon scenario is only complemented with two natural stability requirements pertaining to the underlying semigroup $e^{At}$ and the component $F(t)$ arising from the decomposition of $e^{At}B$; see Hypothesis~\ref{h:ip1} and Hypothesis~\ref{h:ip2}(i) in the next Section.
 
As a general strategy for the proof, we start with the variational approach which has been 
pursued in the work of the authors of \cite{las-trig-encyclopedia}, since their earlier
studies of parabolic PDEs with boundary control; see, e.g., \cite{las-trig-analytic}.
Accordingly, the program evolves through the following steps.
(a) The existence of a unique optimal pair $(\hat{u},\hat{y})$ follows by 
convex optimization arguments.
(b) The Lagrange-dual multipliers method yields the optimality condition for the optimal pair.
Then, the optimal pair $(\hat{u},\hat{y})$ is characterized in terms of the data of the problem.
(c)
An operator $P$ is introduced, defined in terms of the optimal state 
$\hat{y}(t;y_0)=\Phi(t)y_0$; this operator is ultimately shown to satisfy the Algebraic 
Riccati Equation (ARE). 

However, although the general philosophy of the aforesaid approach applies as well to the present abstract class of boundary control systems, the regularity specific to the operator 
$e^{At}B$---equivalently, to the corresponding input-to-state map $L$---brings about novel technical challenges 
in order to establish that the ARE is well-posed. 
Delicate issues are encountered when we are 
\begin{enumerate}
\item[(i)]
to give a meaning to the gain operator $B^*P$, for which boundedness on the state space does not hold: hence, it is sought (and in fact established) on a dense subset; even more, when we are  
\item[(ii)]
to pinpoint the regularity of the map $t\mapsto\Phi(t)B$, which plays a central role in 
the study of the differential properties of the optimal state semigroup $\Phi(t)$---that is eventually shown to be differentiable for $t > 0$ on $\cD(A)$.
\end{enumerate}
We note that $\cD(A)$ is {\it not} a natural domain of the strongly perturbed evolution 
$\Phi(t)$ and therefore the aforementioned differentiability is far from expected. 
This differential property is a consequence of an appropriate (time and space) regularity
result obtained for the map $t\mapsto \Phi(t)B$, which in turn hinges on the a suitably
developed operator perturbation theory applied to the original operator 
$e^{At}B: U \rightarrow [\cD(A^*)]'$.  

It is important to emphasize that while the former issue (i) has its counterpart in the 
finite time horizon theory for the present class of dynamics (see \cite{abl-2}), 
(ii) requires novel developments not only with respect to previous theories of the LQ-problem 
(namely, for different classes of boundary control systems, such as the one characterized by 
singular estimates), but also with respect to the finite time horizon case for the present class of dynamics.
For our developments we introduce and employ an appropriate class of weighted function spaces 
(see \eqref{e:weighted-spaces}), whose central role will become fully apparent in 
Section~\ref{ss:differentiate-Phi(t)B}. 
 
\medskip
{\bf Paper outline. }
We conclude the Introduction with a brief overview of the paper.
In Section~\ref{s:one} we introduce the class of linear control systems, characterized by Assumptions~\ref{h:ip1}-\ref{h:ip2}, for which we develop the infinite time horizon optimal
control theory.
The major statements of this theory are collected in Theorem~\ref{t:main}.
Few remarks about the notation are given at the end of the Section.

In Section~\ref{s:two} we derive a complex of regularity results which concern the operator
$B^*e^{A^*t}$, as well as the components $F$ and $G$ of its decomposition.
In addition, we develop a full regularity theory for the input-to-state map $L$ 
defined by \eqref{e:input-to-state} and its adjoint $L^*$ (Proposition~\ref{p:stimeL}, 
Proposition~\ref{p:stimeL*}).
These results constitute the fundamental basis for the developments of the subsequent 
sections.

In Section~\ref{s:gain} we introduce the optimal cost operator $P$ and show that the gain 
operator $B^*P$ is bounded on $\cD(A^\epsilon)$ (Theorem~\ref{t:bounded-gain}).
The validity of the feedback representation of the optimal control is established here.

In Section~\ref{s:are} we prove well-posedness of the algebraic Riccati equations.
The technical basis for the corresponding Theorem~\ref{t:are-wellposed} is
found in Proposition~\ref{p:heritage} and its Corollary~\ref{c:differentiability-on-domain_A},
which in turn follow from the novel result of Theorem~\ref{t:invertible-on-X_q}.
The preliminary description of the domain $\cD(A_P)$ of the optimal state semigroup, 
along with the detailed information provided by Proposition~\ref{p:berliner-inclusion}
(whose proof relies on the theory of interpolation spaces), will also prove essential 
in showing Corollary~\ref{c:differentiability-on-domain_A}.

In Section~\ref{s:examples} we briefly illustrate the applicability of the obtained 
infinite-time horizon theory through the analysis of a natural optimal control problem for the thermoelastic system which motivated and initiated our former theory of \cite{abl-2}.

\section{The mathematical problem. Main results}   \label{s:one}
Let $Y$ and $U$ be two separable Hilbert spaces, the {\em state} and {\em control} space, respectively.
We consider, on the extrapolation space $[\cD(A^*)]'$, the abstract control system 
\begin{equation}\label{e:state-eq}
\begin{cases}
y'(t)=Ay(t)+Bu(t)\,, & \quad t>0
\\[2mm]
y(0)=y_0\in Y\,, & 
\end{cases}
\end{equation}
under the following basic Assumptions.
\begin{assumption} \label{h:ip1} 
Let $Y$, $U$ separable complex Hilbert spaces. 
\begin{itemize}
\item 
The closed linear operator $A:\cD(A)\subseteq Y \to Y$ is the infinitesimal generator of a strongly continuous semigroup $\{e^{At}\}_{t\ge 0}$ on $Y$, which is exponentially stable;
namely, there exist constants $M\ge 1$ and $\omega>0$ such that 
\begin{equation} \label{e:stability}
\|e^{At}\|_{\cL(Y)} \le M \,e^{-\omega t} \qquad \forall t\ge 0\,;
\end{equation}
then $A^{-1}\in \cL(Y)$;
\item 
$B:U\to [\cD(A^*)]'$ is a bounded linear operator; equivalently, $A^{-1}B\in \cL(U,Y)$.
\end{itemize}
\end{assumption} 

To the state equation \eqref{e:state-eq} we associate the quadratic functional
\begin{equation} \label{e:funct}
J(u)=\int_0^\infty \left(\|Ry(t)\|_Z^2 + \|u(t)\|_U^2\right)dt\,, 
\end{equation}
where $Z$ is a third separable Hilbert space---the so called observation space (possibly
$Z\equiv Y$)---and at the outset the {\em observation} operator $R$ simply satisfies 
\begin{equation}\label{e:basic-for-r}
R\in \cL(Y,Z)\,. 
\end{equation}

\begin{remarks}
\begin{rm}
(i) Since by Assumption~\ref{h:ip1} the semigroup $e^{At}$ is uniformly stable, 
then $-A$ is a positive operator and the fractional powers $(-A)^\alpha$, $\alpha\in (0,1)$,
are well defined.
In order to make the notation lighter, we shall write $A^\alpha$ instead of $(-A)^\alpha$ throughout the paper. 
\\
(ii) We note that the functional \eqref{e:funct} makes sense at least for $u\equiv 0$.
This again in view of the exponential stability of the semigroup $e^{At}$ (Assumption~\ref{h:ip1}), 
which combined with \eqref{e:basic-for-r} ensures $Ry(\cdot,y_0;0)\in L^2(0,\infty;Y)$.
\\
(iii) The analysis carried out in the present paper easily extends to more general quadratic functionals, like
\begin{equation*}
J(u)=\int_0^\infty \big(\|Ry(t)\|_Z^2 + \|\tilde{R}u(t)\|_U^2\big)dt\,,
\end{equation*}
provided $\tilde{R}$ is a coercive operator in $U$. 
We take $\tilde{R}=I$ just for the sake of simplicity and yet without loss of generality.
\end{rm}
\end{remarks}

The optimal control problem under study is formulated in the usual way.

\begin{problem}[The optimal control problem] \label{p:problem-0}
Given $y_0\in Y$, seek a control function $u\in L^2(0,T;U)$ which minimizes the
cost functional \eqref{e:funct}, where $y(\cdot)=y(\cdot\,;y_0,u)$ is the solution to
\eqref{e:state-eq} corresponding to the control function $u(\cdot)$.
\end{problem}

Aimed to pursue the study of the infinite horizon problem for the abstract class of boundary 
control systems first introduced in \cite{abl-2}, we assume throughout the paper that 
the dynamics, control and observation operators are subject to the following 
conditions.

\begin{assumptions}\label{h:ip2} 
The operator $B^*e^{A^*t}$ can be decomposed as 
\beq \label{e:key-hypo} 
B^*e^{A^*t}x = F(t)x + G(t)x, \qquad t\ge 0, \quad x\in \cD(A^*)\,,
\eeq
where $F(t):Y\to U$ and $G(t):\cD(A^*)\to U$, $t>0$, are bounded linear
operators satisfying the following assumptions:
\begin{description}
\item[(i)] 
there exist constants $\gamma\in (0,1)$ and $\eta, N>0$ such that 
\beq \label{e:keyasF}
\|F(t)\|_{\cL(Y,U)} \le N\,t^{-\gamma}\,e^{-\eta t}\qquad \forall t>0\,;
\eeq
\item[(ii)] 
there is a time $T>0$ such that the operator $G(\cdot)$ belongs 
to $\cL(Y,L^p(0,T;U))$ for all $p\in [1,\infty[$;
\item[(iii)] 
there exist $T>0$ and $\epsilon>0$ such that:
\begin{description}
\item[(a)] 
the operator $G(\cdot){A^*}^{-\epsilon}$ belongs to $\cL(Y,C([0,T];U))$, 
and in particular
\beq \label{e:keyasGb}
K_T:= \sup_{t\in [0,T]}\|G(t){A^*}^{-\epsilon}\|_{\cL(Y,U)} <\infty\,;
\eeq
\item[(b)] 
the operator $R^*R$ belongs to $\cL(\cD(A^{\epsilon}),\cD({A^*}^{\epsilon}))$, i.e.
\beq \label{e:keyasR} 
\|{A^*}^{\epsilon}R^*RA^{-\epsilon}\|_{\cL(Y)} \le c<\infty\,;
\eeq
\item[(c)] 
there exists $q\in\, (1,2)$ such that the operator 
$B^*e^{A^*\cdot }{A^*}^\epsilon$, which is defined in $\cD({A^*}^{1+\epsilon})$, has an 
extension which belongs to $\cL(Y,L^q(0,T;U))$. 
\end{description}
\end{description}
\end{assumptions}


\subsection{Statement of the main results}

The main result of this paper is the following Theorem, which collects the most significant
statements which pertain to the solution of the optimal control problem, as well as
to well-posedness of the corresponding algebraic Riccati equations. 

\begin{theorem} \label{t:main}
Consider the optimal control Problem~\ref{p:problem-0}, under the
set of Hypotheses~\ref{h:ip1}-\ref{h:ip2}.
Then, the following statements are valid.
\begin{enumerate}

\item[S1.] 
For any $y_0\in Y$ there exists a unique optimal pair 
$(\hat{u}(\cdot),\hat{y}(\cdot))$ for Problem~\ref{p:problem-0},
which satisfies the following regularity properties
\begin{equation}\label{e:regularity-optimalpair}
\hat{u}\in \bigcup_{2\le p<\infty} L^p(0,\infty;U)\,,
\quad 
\hat{y}\in C_b([0,\infty);Y) \cap \big[\bigcup_{2\le p< \infty} L^p(0,\infty;Y)\big]\,.
\end{equation}  

\item[S2.] 
The operator $\Phi(t)$, $t\ge 0$, defined by 
\begin{equation}\label{e:optimal-state-semigroup}
\Phi(t)y_0:=\hat{y}(t)=y(t,y_0;\hat{u})
\end{equation}
is a strongly continuous semigroup on $Y$, $t\ge 0$, which is exponentially stable;
namely, 
\begin{equation} \label{e:exponential_1}
\exists M_1\ge 1\,, \,\omega_1>0: \qquad \|\Phi(t)\|_{\cL(Y)} \le M_1 \,e^{-\omega_1t} \quad \forall t\ge 0\,.
\end{equation}

\item[S3.] 
The operator $P\in\cL(Y)$ defined by 
\begin{equation} \label{e:optimal-cost-op}
Px:= \int_0^\infty e^{A^*t}R^*R \Phi(t)x\, dt \qquad x\in Y
\end{equation}
is the optimal cost operator: namely, 
\begin{equation*}
(Px,x)_Y=\int_0^\infty \big(\|R\hat{y}(t;x))\|_Z^2 + \|\hat{u}(t;x)\|_U^2\big)dt\,,
\qquad \forall x\in Y\,,
\end{equation*}
which also shows that $P$ is (self-adjoint and) non-negative.

\item[S4.] 
The gain operator $B^*P$ belongs to $\cL(\cD(A^\epsilon),U)$;
namely, it is just densely defined on $Y$ and yet it is bounded on $\cD(A^\epsilon)$.

\item[S5.] 
The infinitesimal generator $A_P$ of the (optimal state) semigroup $\Phi(t)$ 
coincides with the operator $A(I-A^{-1}BB^*P)$, on the domain
\begin{align*}
\cD(A_P)& \subset \big\{x\in Y: x-A^{-1}BB^*Px\in \cD(A) \big\}
\\
& \subset \big\{ x\in Y: \exists \wlim_{t\to 0^+} \frac1{t} \int_0^t e^{A(t-\tau)}A^{-1}BB^*P\Phi(\tau)x\,d\tau 
\end{align*}

\item[S6.] 
The operator $e^{At}B$, defined in $U$ and a priori with values in $[\cD(A^*)]'$,
is such that
\begin{equation}
e^{\delta\cdot}e^{A\cdot}B\in \cL(U,L^p(0,\infty;[\cD({A^*}^{\epsilon})]')) 
\qquad \forall p\in [1,1/{\gamma})
\end{equation}
for all $\delta\in [0,\omega\wedge \eta)$;
almost the very same regularity is inherited by the operator $\Phi(t)B$:
\begin{equation}
e^{\delta\cdot}\Phi(\cdot)B\in \cL(U,L^p(0,\infty;[\cD({A^*}^{\epsilon})]')) 
\qquad \forall p\in [1,1/{\gamma})\,,
\end{equation}
provided $\delta\in [0,\omega\wedge \eta)$ is sufficiently small.

\item[S7.] 
The optimal cost operator $P$ defined in \eqref{e:optimal-cost-op} 
satisfies the following additional regularity properties:
\begin{equation*}
P\in \cL(\cD(A_P),\cD(A^*))\cap \cL(\cD(A),\cD(A^*_P))\,; 
\end{equation*}
moreover, $P$ is a solution to the Algebraic Riccati equation corresponding to 
Problem~\ref{p:problem-0}, that is 
\begin{equation*}
\begin{split}
& (Px,Az)_Y+(Ax,Pz)_Y-(B^*Px,B^*Pz)_U+(Rx,Rz)_Z=0 
\\[1mm]
& \myspace\myspace\myspace\textrm{for any $x,z\in \cD(A)$,}
\end{split}
\end{equation*}
to be interpreted as   
\begin{equation*}
\begin{split}
& (A^*Px,z)_Y+(x,A^*Pz)_Y-(B^*Px,B^*Pz)_U+(Rx,Rz)_Z=0 
\\[1mm]
& \myspace\myspace\myspace\textrm{when $x,z\in \cD(A_P)$.}
\end{split}
\end{equation*}

\item[S8.] 
If $x\in\cD(A^\epsilon)$, the regularity \eqref{e:regularity-optimalpair} of the optimal pair 
is improved as follows:
\begin{equation*}
\begin{split}
& \hat{y}\in C_b([0,\infty);\cD(A^\epsilon)) 
\cap \big[\bigcup_{1\le p\le \infty} L^p(0,\infty;\cD(A^\epsilon))\big]\,,
\\ 
& \hat{u}\in C_b([0,\infty);U)\cap \big[\bigcup_{1\le p\le \infty} L^p(0,\infty;U)\big]\,.
\end{split}
\end{equation*}  

\item[S9.] 
The following (pointwise in time) feedback representation of the optimal control is valid
for any initial state $x\in Y$:
\begin{equation*}
\hat{u}(t,x) = - B^*P \hat{y}(t,x) \qquad \textrm{for a.e. $t\in (0,\infty)$.}
\end{equation*}

\end{enumerate}

\end{theorem}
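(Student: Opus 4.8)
\medskip
\noindent\emph{Plan of proof.}\quad
The plan is to follow the classical variational route to the feedback synthesis of the optimal control --- as in \cite{las-trig-analytic,las-trig-encyclopedia} and its adaptation to the present class in \cite{abl-2} --- paying close attention to the singular-plus-regular structure \eqref{e:key-hypo} of $B^*e^{A^*t}$. For S1 one first uses the exponential stability in Assumption~\ref{h:ip1} to see that \eqref{e:funct} is finite at $u\equiv 0$; the functional is then strictly convex and coercive on $L^2(0,\infty;U)$, and the direct method --- the requisite continuity and weak lower semicontinuity being supplied by the regularity of the input-to-state map $L$ (Proposition~\ref{p:stimeL}) --- yields a unique optimal control $\hat u$. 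The Lagrange-multiplier condition characterises it as $\hat u=-B^*p$, with adjoint state $p(t)=\int_t^\infty e^{A^*(\sigma-t)}R^*R\,\hat y(\sigma)\,d\sigma$, and the higher integrability in \eqref{e:regularity-optimalpair} is read off from the mapping properties of $L$ and $L^*$ established in Section~\ref{s:two} (Proposition~\ref{p:stimeL}, Proposition~\ref{p:stimeL*}), via \eqref{e:key-hypo} and \eqref{e:keyasF}. For S2 and S3, Bellman's optimality principle gives that $\hat y(\cdot+s)$ is itself optimal from $\hat y(s)$, whence $\Phi(t+s)=\Phi(t)\Phi(s)$; strong continuity follows from S1 and the exponential decay \eqref{e:exponential_1} from $\Phi(\cdot)x\in L^2(0,\infty;Y)$ through a Datko-type argument. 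The semigroup law also yields $p(t)=P\hat y(t)$ with $P$ as in \eqref{e:optimal-cost-op}; boundedness of $P$ is immediate from the two exponential stabilities, and integrating $\frac{d}{dt}(p(t),\hat y(t))_Y$ and using $\hat u=-B^*p$ identifies $(Px,x)_Y$ with the optimal cost, whence $P$ is self-adjoint and non-negative.

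The core of the work --- and where the present analysis genuinely departs both from the singular-estimate theories and from the finite-horizon case of \cite{abl-2} --- is the block S4--S7. For S4 one writes $B^*Px=\int_0^\infty\bigl(F(t)+G(t)\bigr)R^*R\,\Phi(t)x\,dt$ and estimates the two pieces separately: the $F$-term through the integrable singularity \eqref{e:keyasF} and the smoothing \eqref{e:keyasR} of $R^*R$, the $G$-term through the $\cD(A^\epsilon)$-regularity \eqref{e:keyasGb} of $G(\cdot)$, long-time integrability being furnished by the exponential weights; this gives $B^*P\in\cL(\cD(A^\epsilon),U)$, i.e.\ Theorem~\ref{t:bounded-gain}, and with it the feedback representation. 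For S6 one first deduces, for $p<1/\gamma$, that $e^{\delta\cdot}e^{A\cdot}B\in\cL(U,L^p(0,\infty;[\cD({A^*}^\epsilon)]'))$, by duality from the $L^q$-extension of $B^*e^{A^*\cdot}{A^*}^\epsilon$ in Assumption~\ref{h:ip2}(iii)(c) together with \eqref{e:keyasF}, and then transfers this regularity to $\Phi(\cdot)B$ through the perturbed (closed-loop) identity $\Phi(t)B=e^{At}B-\int_0^t e^{A(t-\tau)}BB^*P\,\Phi(\tau)B\,d\tau$, solved in the weighted spaces \eqref{e:weighted-spaces}; here the new operator-perturbation machinery of Theorem~\ref{t:invertible-on-X_q} and Proposition~\ref{p:heritage} is indispensable.

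I expect this last step --- the time/space regularity of $t\mapsto\Phi(t)B$ and the resulting differentiability of $\Phi(t)$ on $\cD(A)$ --- to be the main obstacle: $\cD(A)$ is not $\Phi(t)$-invariant, so that differentiability is not formal and must be extracted from the regularity just described combined with the preliminary description of $\cD(A_P)$ and the interpolation-based inclusions of Proposition~\ref{p:berliner-inclusion}. Granting it, S5 follows by differentiating $\Phi(t)x=e^{At}x-\int_0^t e^{A(t-\tau)}BB^*P\,\Phi(\tau)x\,d\tau$ at $t=0$, which identifies $A_P$ with $A(I-A^{-1}BB^*P)$ on the stated domain, and S7 --- i.e.\ $P\in\cL(\cD(A_P),\cD(A^*))\cap\cL(\cD(A),\cD(A^*_P))$ and the algebraic Riccati equation, Theorem~\ref{t:are-wellposed} --- is obtained by differentiating $t\mapsto(P\Phi(t)x,\Phi(t)z)_Y$, using $\frac{d}{dt}\Phi(t)x=A_P\Phi(t)x$ on $\cD(A)$ (Corollary~\ref{c:differentiability-on-domain_A}) and letting the quadratic term $(B^*Px,B^*Pz)_U$ appear through $\hat u=-B^*P\hat y$.

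Finally, S8 is a bootstrap of S1: for $x\in\cD(A^\epsilon)$ the improved continuity and integrability of $\hat y$ and $\hat u$ follow by feeding $B^*P\in\cL(\cD(A^\epsilon),U)$ and the smoothing of $F(\cdot)$ back into the variational representation of the optimal pair; and S9 is the pointwise-in-time reading of $\hat u(t)=-B^*p(t)=-B^*P\hat y(t)$, meaningful for a.e.\ $t$ since $\hat y(t)\in\cD(A^\epsilon)$ for a.e.\ $t$ by S8.
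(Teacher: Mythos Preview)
Your outline is essentially the paper's own variational route, and the identifications of the key technical ingredients (Propositions~\ref{p:stimeL}--\ref{p:stimeL*}, Theorem~\ref{t:bounded-gain}, Theorem~\ref{t:invertible-on-X_q}, Proposition~\ref{p:heritage}, Corollary~\ref{c:differentiability-on-domain_A}) are correct. Two points, however, need repair.

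\smallskip
\textbf{The representation of $\Phi(t)B$ in S6.} The closed-loop Volterra identity you write,
\[
\Phi(t)B=e^{At}B-\int_0^t e^{A(t-\tau)}B\,B^*P\,\Phi(\tau)B\,d\tau,
\]
is circular: you would need $\Phi(\tau)Bu\in\cD(A^\epsilon)$ in order for $B^*P\Phi(\tau)Bu$ to make sense (by S4), yet the conclusion you seek is only that $\Phi(\tau)Bu\in[\cD({A^*}^\epsilon)]'$. The paper avoids this by working instead with the \emph{open-loop} representation
\[
\Phi(\cdot)=\bigl(I-L\,\Lambda^{-1}L^*R^*R\bigr)\,e^{A\cdot},\qquad \Lambda=I+L^*R^*RL,
\]
formally evaluated at $x=Bu$ (formula~\eqref{e:phi-b}). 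Here each factor is controlled by the $L^q_\delta$-theory of Section~\ref{s:two}: $e^{A\cdot}Bu\in L^1_\delta(0,\infty;[\cD({A^*}^\epsilon)]')$ by Lemma~\ref{l:reg-S(t)B}, then $L^*R^*R$ sends this to $L^q_\delta(0,\infty;U)$ by Proposition~\ref{p:L*R*R}, and the genuinely new step is that $\Lambda^{-1}$ maps $L^q_\delta$ to $L^q_{\delta-\sigma_0}$ (Theorem~\ref{t:invertible-on-X_q}), after which $L$ returns to $L^p_\beta(0,\infty;[\cD({A^*}^\epsilon)]')$. This is precisely why the weighted-space machinery and Theorem~\ref{t:invertible-on-X_q} are needed: they act on the \emph{control} side, not on $B^*P\Phi(\cdot)B$.

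\smallskip
\textbf{The justification of S9.} Your last sentence invokes S8 to conclude $\hat y(t)\in\cD(A^\epsilon)$ for a.e.\ $t$, but S8 is stated only for initial data $x\in\cD(A^\epsilon)$; for general $x\in Y$ this membership is not available. The paper (Proposition~\ref{p:feedback}) first proves the feedback formula for $x\in\cD(A^\epsilon)$ via S8, and then extends to all $x\in Y$ by density, using the a~priori bound $\|B^*P\Phi(\cdot)x\|_{L^p(0,\infty;U)}=\|\hat u(\cdot;x)\|_{L^p}\le C\|x\|_Y$ from S1.
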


\subsection{Notation} 
Inner products in Hilbert spaces $X$ ($Y$ and $U$, in practice) will be denoted by
$(\cdot,\cdot)_X$ throughout the paper.
The subscripts will be omitted when no confusion arises.
Instead, the symbol $\langle\cdot,\cdot\rangle_V$ will denote a duality pairing of 
$V'$ with $V$; $V=\cD({A^*}^\epsilon)$ will occur most often.

We shall utilize $L^p$-spaces with weights, defined (in the usual way) as follows:
\begin{equation*}
L^p_g(0,\infty;X):=\Big\{ f:(0,\infty)\to X\,, \; g(\cdot)\,f(\cdot)\in L^p(0,\infty;X)\Big\}\,,
\end{equation*}
where $g:(0,\infty)\to \mathbb{R}$ is a given (weight) function.
We will more specifically utilize {\em exponential weights} such as $g(t) =e^{\delta t}$;
to simplify the notation we will write
\begin{equation} \label{e:weighted-spaces}
L^p_\delta(0,\infty;X):=\Big\{ f:(0,\infty)\to X\,, 
\; e^{\delta \cdot}\,f(\cdot)\in L^p(0,\infty;X)\Big\}\,.
\end{equation}

\section{The input-to-state map: relevant regularity results} \label{s:two}
We begin by providing a series of regularity results, concerning first the operator $B^*e^{A^*t}$ 
(or one of its components) and then the input-to-state map $L$ defined by \eqref{e:input-to-state}
(and its adjoint $L^*$).
These results constitute the first consequences of the abstract Assumptions~\ref{h:ip2} as well
as the fundamental basis for the more challenging developments of the subsequent sections.

\subsection{Preliminary results}

\begin{proposition} \label{p:expG} 
For each $\delta\in [0,\omega\wedge\eta[$ and $p\in [1,\infty[$ the map $t \mapsto e^{\delta t} G(t)$ belongs to $\cL(Y,L^p(0,\infty;U))$.
\end{proposition}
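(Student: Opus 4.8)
The statement extends Assumption~\ref{h:ip2}(ii)---which only provides
$G(\cdot)\in\cL(Y,L^p(0,T;U))$ on a \emph{finite} interval $[0,T]$, together with the
uniform bound \eqref{e:keyasGb}---to the whole half-line, with an exponential weight
$e^{\delta t}$ for any $\delta<\omega\wedge\eta$. The natural strategy is to split
$(0,\infty)=(0,T]\cup[T,\infty)$ and treat the two pieces by entirely different mechanisms:
on $(0,T]$ the hypothesis already gives integrability and we only need to absorb the bounded
weight $e^{\delta t}\le e^{\delta T}$; on $[T,\infty)$ the key point is that $G(t)$ inherits
the exponential decay of the whole system, and one can afford to estimate it in the
\emph{operator norm} (i.e.\ as an $L^\infty$-in-time bound), which is more than enough for
$L^p$-integrability against any slower-growing exponential weight.

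\medskip
First I would record, from the decomposition \eqref{e:key-hypo} and \eqref{e:keyasF}, that
for $x\in\cD(A^*)$
\[
G(t)x = B^*e^{A^*t}x - F(t)x,
\]
and use this to propagate the bound for large $t$. The idea is to write, for $t\ge T$,
$e^{A^*t}x = e^{A^*(t-T)}\,e^{A^*T}x$; by \eqref{e:stability} (applied to $A^*$, whose
semigroup has the same norm bound) we have $\|e^{A^*(t-T)}\|_{\cL(Y)}\le M e^{-\omega(t-T)}$,
hence
\[
\|B^*e^{A^*t}x\|_U
= \|(B^*e^{A^*T})\,e^{A^*(t-T)}x\|_U
\le M e^{-\omega(t-T)}\,\|B^*e^{A^*T}x\|_U .
\]
Now $B^*e^{A^*T} = F(T)+G(T)$ is a bounded operator $Y\to U$ (by \eqref{e:keyasF} and by
\eqref{e:keyasGb} applied at $t=T$, since $G(T)=\big(G(T)A^{*-\epsilon}\big)A^{*\epsilon}$ is
bounded on $\cD(A^{*\epsilon})$, or more directly one picks an interior time and uses the
semigroup property once more to land in $C([0,T];U)$); so $\|B^*e^{A^*t}\|_{\cL(Y,U)}\le
C e^{-\omega t}$ for $t\ge T$, and consequently, using \eqref{e:keyasF} again on the $F$-term,
\[
\|G(t)\|_{\cL(Y,U)} \le C e^{-\omega t} + N t^{-\gamma}e^{-\eta t}
\le C' e^{-(\omega\wedge\eta) t}\qquad (t\ge T).
\]
Multiplying by $e^{\delta t}$ with $\delta<\omega\wedge\eta$ yields an exponentially decaying
$L^\infty$ bound on $[T,\infty)$, which lies in $L^p(T,\infty;\cL(Y,U))$ for every
$p\in[1,\infty[$; that takes care of the tail. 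For the finite piece, Assumption~\ref{h:ip2}(ii)
gives $G(\cdot)\in\cL(Y,L^p(0,T;U))$ and $\|e^{\delta\cdot}G(\cdot)x\|_{L^p(0,T;U)}\le
e^{\delta T}\|G(\cdot)x\|_{L^p(0,T;U)}\le e^{\delta T}\|G(\cdot)\|_{\cL(Y,L^p(0,T;U))}\|x\|_Y$.
Adding the two contributions gives the claimed membership in $\cL(Y,L^p(0,\infty;U))$ for each
$\delta<\omega\wedge\eta$ and each $p<\infty$.

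\medskip
The one point deserving care---and the place where I expect the only genuine friction---is
the justification that $\|B^*e^{A^*t}\|_{\cL(Y,U)}$ is finite and exponentially small for
$t$ bounded away from $0$: a priori $B^*$ is only defined as the adjoint of the unbounded
$B:U\to[\cD(A^*)]'$, so $B^*e^{A^*t}x$ makes literal sense only for $x\in\cD(A^*)$. This is
handled exactly as above by the semigroup factorization: pick any $t_0\in(0,T)$; then
$B^*e^{A^*t_0}=F(t_0)+G(t_0)$ where $F(t_0)\in\cL(Y,U)$ by \eqref{e:keyasF} and $G(t_0)\in
\cL(\cD(A^{*\epsilon}),U)$ is bounded on the \emph{larger} space by a density/closed-graph
argument combined with \eqref{e:keyasGb} (or one simply chooses $t_0$ so that
$G(t_0)A^{*-\epsilon}$ is controlled by $K_T$ and notes $G(\cdot)\in\cL(Y,L^p)$ forces
$G(t)\in\cL(Y,U)$ for a.e.\ $t$, then shifts by a further small time). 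Either way, for
$t\ge t_0$ one has $B^*e^{A^*t}x=\big(B^*e^{A^*t_0}\big)e^{A^*(t-t_0)}x$ with a genuinely
bounded first factor, so the estimate extends from $\cD(A^*)$ to all of $Y$ with
exponential decay $M e^{-\omega(t-t_0)}$ times a constant. Everything else is the elementary
bookkeeping of exponential weights against exponential decay recorded in the previous paragraph.
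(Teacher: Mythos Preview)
Your overall strategy---split $(0,\infty)$ at $T$, control the finite piece by Hypothesis~\ref{h:ip2}(ii), and exploit the semigroup decay for the tail---is the right one and matches the paper's. The gap lies precisely where you flag the ``one point deserving care'': you need $B^*e^{A^*t_0}\in\cL(Y,U)$ for some fixed $t_0>0$, and none of your proposed justifications actually delivers this. The decomposition $B^*e^{A^*t_0}=F(t_0)+G(t_0)$ only gives $G(t_0)\in\cL(\cD({A^*}^\epsilon),U)$, not $\cL(Y,U)$; a closed-graph argument cannot upgrade this, since $G(t_0)x$ is not even defined for $x\notin\cD(A^*)$. The claim that ``$G(\cdot)\in\cL(Y,L^p)$ forces $G(t)\in\cL(Y,U)$ for a.e.\ $t$'' is false in general for vector-valued maps: a bounded operator $Y\to L^p(0,T;U)$ need not factor through pointwise bounded slices. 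And ``shifting by a further small time'' does not help either, because $e^{A^*s}$ is merely $C_0$, not analytic, so it does not map $Y$ into $\cD({A^*}^\epsilon)$.

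The paper circumvents this entirely by never seeking a pointwise $\cL(Y,U)$ bound. Instead it establishes the \emph{integral} estimate
\[
\int_{T/2}^{T}\|B^*e^{A^*t}x\|_U^p\,dt\le c\|x\|_Y^p\qquad(x\in\cD(A^*)),
\]
which follows directly from Hypotheses~\ref{h:ip2}(i)--(ii), and then writes $\int_{T/2}^\infty$ as a sum over intervals $[kT/2,(k+1)T/2]$, substituting $t=s+(k-1)T/2$ and applying the integral bound to $e^{A^*(k-1)T/2}x$ together with \eqref{e:stability}. This yields a convergent geometric series and, after subtracting the (easily controlled) $F$-part, the claimed $L^p$ bound for $e^{\delta\cdot}G(\cdot)$ on $[T/2,\infty)$. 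Your argument is easily repaired along these lines: replace the pointwise factorization on $[T,\infty)$ by this $L^p$-on-a-fixed-window plus telescoping-sum device.
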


\begin{proof}
Let $T>0$ be such that Hypotheses~\ref{h:ip2}(ii) holds. 
By hypothesis~\ref{h:ip2}(i), we have
\begin{equation*}
F(\cdot) \in \cL(Y,L^p(\epsilon,T/2;U)) \quad \forall \epsilon \in ]0,\frac{T}{2}]\,.
\end{equation*}
Hence,
\beq \label{stima1}
\int_{T/2}^T \|B^*e^{A^*t}x\|_U^p\, dt \le c\|x\|_Y^p\quad \forall x\in \cD(A^*)\,.
\eeq
Now we can write:
\beyy
\int_{T/2}^\infty \|e^{\delta t}B^*e^{tA^*}x\|_U^p\, dt 
& = & \int_{T/2}^\infty e^{\delta pt}\|B^*e^{A^*t}x\|_U^p\, dt = 
\\ 
& = & \sum_{k=1}^\infty \int_{kT/2}^{(k+1)T/2}e^{\delta pt}\|B^*e^{A^*t}x\|_U^p\, dt = 
\\
& = & \sum_{k=1}^\infty \int_{T/2}^Te^{\delta ps}e^{\delta p(k-1)\frac{T}2}
\|B^*e^{A^*s}[e^{A^*(k-1)\frac{T}2 }x]\|_U^p\, ds\,;
\eeyy
using (\ref{stima1}), we deduce for each $x\in \cD(A^*)$
\beyy
\int_{T/2}^\infty \|e^{\delta t}B^*e^{A^*t}x\|_U^p\, dt & \le & 
c\,e^{\delta pT}\sum_{k=1}^\infty e^{\delta p(k-1)\frac{T}2} \|e^{A^*(k-1)\frac{T}2}x\|_Y^p\, 
\le \\
& \le & c\,e^{\delta pT}M^p \sum_{k=1}^\infty e^{-(\omega-\delta)p\frac{T}2(k-1)} \|x\|_Y^p 
= c(p,T)\|x\|_Y^p\,.
\eeyy
By density, this shows that the map $t\mapsto e^{\delta t}B^*e^{A^*t}$ has a continuous extension from $Y$ to $L^p(T/2,\infty;U)$. 
On the other hand, Hypothesis~\ref{h:ip2}(i) implies that 
\begin{equation*}
\int_{T/2}^\infty \|e^{\delta t}F(t)x\|_U^p\, dt 
\le N^p \left(\frac2{T}\right)^{\gamma p} \int_{T/2}^\infty e^{-(\eta-\delta)pt} \|x\|_Y^p\,dt 
\le c(p,T)\,\|x\|_Y^p\,,
\end{equation*}
so that
\begin{equation*}
\int_{T/2}^\infty \|e^{\delta t}G(t)x\|_U^p\, dt = 
\int_{T/2}^\infty \|e^{\delta t}\left[B^*e^{A^*t}-F(t)\right]x\|_U^p\, dt \le c(p,T)\|x\|_Y^p\,.
\end{equation*}

As, by Hypothesis~\ref{h:ip2}(ii),
\begin{equation*}
\int_0^{T/2}\|e^{\delta t}G(t)x\|_U^p\, dt \le e^{\delta p \frac{T}2}\int_0^{T/2}\|G(t)x\|_U^p\, dt 
\le c(p,T)\|x\|_Y^p\,,
\end{equation*}
we conclude that $t\mapsto e^{\delta t}G(t)$ is continuous from $Y$ to $L^p(0,\infty;U)$. 
\end{proof}

\begin{proposition}\label{p:expBA} 
For each $\delta\in [0,\omega\wedge \eta[$ the map 
$t \mapsto e^{\delta t}B^*e^{A^*t}{A^*}^\epsilon$ has an extension which belongs to $\cL(Y,L^q(0,\infty;U))$. 
\end{proposition}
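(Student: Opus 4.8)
The strategy mirrors the proof of Proposition~\ref{p:expG}: I would split the time axis into a neighborhood of the origin $[0,T]$, where the short-time singularity is controlled directly by Hypothesis~\ref{h:ip2}(iii)(c), and a tail $[T,\infty)$, where exponential decay of the semigroup does the work. On $[0,T]$ there is nothing to prove beyond what is assumed: Hypothesis~\ref{h:ip2}(iii)(c) states precisely that $B^*e^{A^*\cdot}{A^*}^\epsilon$, a priori defined on $\cD({A^*}^{1+\epsilon})$, extends to an operator in $\cL(Y,L^q(0,T;U))$, and since $e^{\delta t}\le e^{\delta T}$ on this interval the weight is harmless: $\int_0^T\|e^{\delta t}B^*e^{A^*t}{A^*}^\epsilon x\|_U^q\,dt\le e^{\delta q T}\,c\,\|x\|_Y^q$. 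So the content of the proposition is entirely in the tail estimate.

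For the tail, I would reproduce the semigroup-decomposition trick from Proposition~\ref{p:expG}. Write $[T,\infty)=\bigcup_{k\ge 1}[kT,(k+1)T)$ (or use $T/2$-blocks as in that proof; either works), and on the $k$-th block use the semigroup property $e^{A^*t}=e^{A^*s}e^{A^*(k-1)T}$ with $s=t-(k-1)T\in[0,T]$, together with the fact that ${A^*}^\epsilon$ commutes with $e^{A^*t}$. This gives, for $x\in\cD({A^*}^{1+\epsilon})$,
\begin{equation*}
\int_{kT}^{(k+1)T}\|e^{\delta t}B^*e^{A^*t}{A^*}^\epsilon x\|_U^q\,dt
= e^{\delta q(k-1)T}\int_0^T\|e^{\delta s}B^*e^{A^*s}{A^*}^\epsilon\big[e^{A^*(k-1)T}x\big]\|_U^q\,ds\,,
\end{equation*}
and now the inner integral is $\le c\,e^{\delta qT}\|e^{A^*(k-1)T}x\|_Y^q$ by the assumed $L^q(0,T;U)$ bound applied to the vector $e^{A^*(k-1)T}x$. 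Using $\|e^{A^*(k-1)T}x\|_Y\le M e^{-\omega(k-1)T}\|x\|_Y$ from \eqref{e:stability} and summing the geometric series $\sum_k e^{-(\omega-\delta)q(k-1)T}$, which converges because $\delta<\omega$, yields $\int_T^\infty\|e^{\delta t}B^*e^{A^*t}{A^*}^\epsilon x\|_U^q\,dt\le c(q,T,\delta)\|x\|_Y^q$. Combining with the $[0,T]$ piece and extending by density from $\cD({A^*}^{1+\epsilon})$ (which is dense in $Y$) to all of $Y$ finishes the argument.

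\textbf{Expected main obstacle.} The only genuinely delicate point is the density/extension step: the identity used in the block decomposition is literally valid only on $\cD({A^*}^{1+\epsilon})$, the natural domain of $B^*e^{A^*\cdot}{A^*}^\epsilon$, so one must first establish the uniform estimate $\|e^{\delta\cdot}B^*e^{A^*\cdot}{A^*}^\epsilon x\|_{L^q(0,\infty;U)}\le C\|x\|_Y$ on that dense subspace and then invoke boundedness to get the closed extension on $Y$. One should also be slightly careful that the extension obtained in the tail is consistent with the one postulated in Hypothesis~\ref{h:ip2}(iii)(c) on $[0,T]$ — but since both are obtained as continuous extensions from the same dense set, they agree. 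Everything else (the geometric summation, pulling the weight out on $[0,T]$) is routine and parallels Proposition~\ref{p:expG} verbatim.
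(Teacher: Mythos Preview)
Your proposal is correct and follows essentially the same route as the paper: block decomposition of $[0,\infty)$ into intervals of length $T$, use of the semigroup property $e^{A^*t}{A^*}^\epsilon=B^*e^{A^*s}{A^*}^\epsilon e^{A^*kT}$ to reduce each block to the $[0,T]$ estimate of Hypothesis~\ref{h:ip2}(iii)(c) applied to the vector $e^{A^*kT}x$, and summation of the resulting geometric series in $e^{-(\omega-\delta)qkT}$. The paper's proof is slightly more compact (it treats all blocks $k\ge 0$ uniformly rather than separating $[0,T]$ from the tail) and works directly with $x\in\cD({A^*}^\epsilon)$ rather than $\cD({A^*}^{1+\epsilon})$, but your more explicit handling of the density/extension step is arguably cleaner; the arguments are otherwise identical.
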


\begin{proof}
Let $T>0$ such that Hypothesis~\ref{h:ip2}(iii)(c) holds. 
We can write, for each $x\in \cD({A^*}^\epsilon)$,
\beyy
\int_0^\infty \|e^{\delta t}B^*e^{A^*t}{A^*}^\epsilon x\|_U^q\,dt 
& = & \sum_{k=0}^\infty \int_{kT}^{(k+1)T}e^{\delta qt}\|B^*e^{A^*t}{A^*}^\epsilon x\|_U^q\,dt =
\\
& = & \sum_{k=0}^\infty \int_0^T e^{\delta q(s+kT)}\|B^*e^{A^*s}{A^*}^\epsilon e^{A^*kT}x\|_U^q\,ds\,.
\eeyy
As $e^{A^*kT}x\in \cD({A^*}^\epsilon)$ whenever $x\in \cD({A^*}^\epsilon)$, we get by 
Hypothesis~\ref{h:ip2}(iii)(c)
\beyy
\int_0^\infty \|e^{\delta t}B^*e^{A^*t}{A^*}^\epsilon x\|_U^q\,ds & \le & e^{\delta qT}\sum_{k=0}^\infty e^{\delta qkT}\int_0^T \big\|B^*e^{A^*s}{A^*}^\epsilon\big[e^{A^*kT}x\big]\big\|_U^q \,ds 
\le \\
& \le & c\,e^{\delta qT}\sum_{k=0}^\infty M^q e^{-(\omega-\delta)qkT}\|x\|_Y^q\,.
\eeyy
\end{proof}

\begin{proposition}\label{p:expGf} 
For each $\delta\in [0,\omega\wedge\eta[$ and $p\in [1,1/\gamma[$ the map 
$t \mapsto e^{\delta t} G(t)f(t)$ belongs to $L^p(0,\infty;U)$ whenever 
$f\in L^r(0,\infty;\cD({A^*}^\epsilon))$, with
\begin{equation} \label{e:constraint-r}
\frac{p}{1-\gamma p}<r\le \infty\,;
\end{equation}
in addition,
\begin{equation}\label{e:expGf}
\|e^{\delta\cdot}G(\cdot)f(\cdot)\|_{L^p(0,\infty;U)} 
\le c_p\|f\|_{L^r(0,\infty;\cD({A^*}^\epsilon))}\,.
\end{equation}
The constraints assumed on the exponents $p$ and $r$ are sharp.
\end{proposition}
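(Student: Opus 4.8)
The plan is to combine the regularity of $G(\cdot){A^*}^{-\epsilon}$ near $t=0$, given by Hypothesis~\ref{h:ip2}(iii)(a), with the $L^q$-in-time control of $B^*e^{A^*\cdot}{A^*}^\epsilon$ from Proposition~\ref{p:expBA} over long time, exactly as in the decomposition philosophy of the preceding two propositions: split the time axis at the threshold $T$ of the hypotheses, handle $(0,T)$ by a Hölder argument exploiting the singular bound $\|F(t)\|\le N t^{-\gamma}e^{-\eta t}$ together with the boundedness of $G(t){A^*}^{-\epsilon}$, and handle $(T,\infty)$ by the exponential decay already quantified in Proposition~\ref{p:expBA} (applied with a $\delta'>\delta$ still in $[0,\omega\wedge\eta)$, which absorbs the weight). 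First I would write, for $t\in(0,T)$,
\[
e^{\delta t}G(t)f(t) = \big(e^{\delta t}G(t){A^*}^{-\epsilon}\big)\,{A^*}^\epsilon f(t),
\]
but this only uses $f(t)\in\cD({A^*}^\epsilon)$ pointwise; to extract an $L^p$-bound with the stated exponent range one must instead treat $G(t)=B^*e^{A^*t}-F(t)$ and bound the two pieces separately. For the $F$-piece, $\|e^{\delta t}F(t)f(t)\|_U\le N t^{-\gamma}\|f(t)\|_Y\le N t^{-\gamma}\|f(t)\|_{\cD({A^*}^\epsilon)}$ on $(0,T)$, and since $t^{-\gamma}\in L^s(0,T)$ exactly for $s<1/\gamma$, Hölder's inequality with $\tfrac1p=\tfrac1s+\tfrac1r$ gives the $L^p$-bound precisely under the constraint $\tfrac{p}{1-\gamma p}<r$ (equivalently $\tfrac1p-\tfrac1r>\gamma$, i.e. $s<1/\gamma$); this is the origin of \eqref{e:constraint-r}. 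For the $B^*e^{A^*t}$-piece on $(0,T)$ I would further split $B^*e^{A^*t}f(t)$ as $\big(B^*e^{A^*t/2}{A^*}^\epsilon\big)\big({A^*}^{-\epsilon}e^{A^*t/2}\big){A^*}^\epsilon f(t)$ — wait, this reintroduces ${A^*}^\epsilon f$; more cleanly, write $B^*e^{A^*t}f(t)$ and use Proposition~\ref{p:expBA}'s extension: since $t\mapsto B^*e^{A^*t}{A^*}^\epsilon$ is bounded $Y\to L^q(0,T;U)$, a generalized Young/Hölder estimate with $q\in(1,2)$ and the convolution-free pointwise structure yields the $L^p$ bound once $1/p \ge 1/q - 1/r$ is compatible with $p<1/\gamma$; one checks $q<1/\gamma$ follows from $\gamma<1/2\le 1/q'$... so a short computation confirms $q$ sits in the admissible window because $q<2$.

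The long-time part $(T,\infty)$ is the routine one: on each interval $[kT,(k+1)T]$ use the semigroup property $e^{A^*t}=e^{A^*s}e^{A^*kT}$, the exponential decay $\|e^{A^*kT}\|\le M e^{-\omega kT}$ applied to $f$ (or to the already-shifted argument), and sum a geometric series in $e^{-(\omega-\delta)kT}$, precisely mimicking the proofs of Proposition~\ref{p:expG} and Proposition~\ref{p:expBA}. Assembling the $(0,T)$ and $(T,\infty)$ estimates gives \eqref{e:expGf} with a constant $c_p$ depending on $p$ (and on $r$, $\gamma$, $N$, $M$, $\omega$, $\delta$, $T$, $K_T$).

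Finally, for sharpness: I would argue that the upper constraint $p<1/\gamma$ cannot be relaxed by testing against the single-operator situation $G(t)=F(t)$ with $F(t)$ saturating $\|F(t)\|\sim t^{-\gamma}$ near $0$ (which is consistent with all the hypotheses) and $f$ a fixed nonzero element of $\cD({A^*}^\epsilon)$, forcing $t\mapsto F(t)f\notin L^{1/\gamma}_{\mathrm{loc}}$; and the lower constraint $r>\tfrac{p}{1-\gamma p}$ is sharp because at equality one would need $t^{-\gamma}\cdot\|f(t)\|\in L^p$ with $\|f(\cdot)\|$ only in the borderline Lorentz space, which fails for suitable $f$ — a standard Hölder-endpoint counterexample. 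The main obstacle I anticipate is the middle-range bookkeeping on $(0,T)$: making the Hölder exponents match simultaneously for the $F$-piece and the $B^*e^{A^*t}$-piece, and verifying that the resulting constraint on $(p,r)$ collapses exactly to \eqref{e:constraint-r} rather than to something strictly stronger; in particular one must confirm that the $q$-integrability from Proposition~\ref{p:expBA} never becomes the binding restriction, which hinges on $q<1/\gamma$, itself a consequence of $q<2$ and $\gamma<1/2$ — this last inequality should be noted and used explicitly.
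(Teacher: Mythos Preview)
Your overall architecture (split at $T$, H\"older on the $F$-piece to produce the constraint \eqref{e:constraint-r}, semigroup bootstrap on $(T,\infty)$ via a geometric series) matches the paper's. But there is a genuine gap in your treatment of $(0,T)$, and it stems from a wrong turn you take right after writing the identity
\[
e^{\delta t}G(t)f(t)=\big(e^{\delta t}G(t){A^*}^{-\epsilon}\big)\,{A^*}^\epsilon f(t).
\]
You dismiss this as ``only pointwise'' and switch to the decomposition $G=B^*e^{A^*\cdot}-F$. In fact the identity you discarded is exactly what the paper uses, and it is enough: Hypothesis~\ref{h:ip2}(iii)(a) gives the uniform bound $\|G(t){A^*}^{-\epsilon}\|_{\cL(Y,U)}\le K_T$ on $[0,T]$, so
\[
\int_0^T e^{\delta pt}\|G(t)f(t)\|_U^p\,dt \le K_T^p\,e^{\delta pT}\int_0^T\|{A^*}^\epsilon f(t)\|_Y^p\,dt,
\]
and a single H\"older step (exponents $r/p$, $r/(r-p)$; note $r>p$ is automatic from $r>p/(1-\gamma p)$) gives the $L^p$ bound for $G$ on $(0,T)$. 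The sharp constraint \eqref{e:constraint-r} then enters only through the $F$-piece, exactly as you computed. The paper next combines these two into an estimate for $B^*e^{A^*t}f(t)$ on $(0,T)$, bootstraps \emph{that} to $(T,\infty)$ by the semigroup/geometric-series argument (applied to the shifted function $e^{A^*kT}f(\cdot+kT)$), and finally recovers $G$ on $(T,\infty)$ as $B^*e^{A^*\cdot}-F$.

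By contrast, your attempt to control $B^*e^{A^*t}f(t)$ on $(0,T)$ via Proposition~\ref{p:expBA} does not go through. That proposition bounds $t\mapsto B^*e^{A^*t}{A^*}^\epsilon x$ in $L^q(0,T;U)$ for \emph{fixed} $x\in Y$; it gives no pointwise operator-norm control of $B^*e^{A^*t}{A^*}^\epsilon$, so there is no direct way to pass to time-varying $f(t)$ (there is no convolution structure here). Your subsequent appeal to ``$\gamma<1/2$'' is also illegitimate: the standing hypothesis is $\gamma\in(0,1)$, and nothing in the statement or the paper's proof needs $\gamma<1/2$. Once you restore the direct $(0,T)$ estimate for $G$ via (iii)(a), both of these difficulties disappear and the exponent bookkeeping collapses exactly to \eqref{e:constraint-r}.
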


\begin{proof}
Let $f\in L^r(0,\infty;\cD({A^*}^\epsilon))$ be given, initially with
$1\le r<\infty$.
We take a representative of $f(t)$, defined for almost any $t\in [0,T]$,
and derive a preliminary estimate of $\|e^{\delta\cdot}G(\cdot)f(\cdot)\|$ in 
$L^p(0,T;U)$, that is 
\begin{equation}\label{e:finite_1}
\begin{split}
\int_0^T \|e^{\delta t}\,G(\cdot)f(\cdot)\|_U^p\,dt 
&\le K_T^p \int_0^T e^{\delta pt}\|{A^*}^\epsilon f(t)\|_Y^p\, dt
\\[1mm]
& \le K_T^p \Big(\int_0^T e^{\delta t\,pr/(r-p)}\,dt \Big)^{(r-p)/r}\, 
\|f\|_{L^r(0,\infty;\cD({A^*}^\epsilon))}^p
\\[1mm]
& = C_T\|f\|_{L^r(0,\infty;\cD({A^*}^\epsilon))}^p
\end{split}
\end{equation} 
where we utilized first Hypothesis~\ref{h:ip2}(iii)(a) and then we applied the H\"older inequality
with exponents $r/(r-p)$ and $r/p$, taking $r>p\ge 1$. 

On the other hand, owing to Hypothesis~\ref{h:ip2}(i), we immediately find the estimate
\begin{equation}\label{e:pre-finite_2}
\int_0^T e^{\delta pt}\|F(t)f(t)\|_U^p \, dt 
\le N^p\int_0^T\,\frac{e^{-(\eta-\delta)pt}}{t^{\gamma p}}\,\|f(t)\|_Y^p\,.
\end{equation}
Thus, in order to render the integrand in the right hand side of \eqref{e:pre-finite_2} 
summable, we take $p\in [1,1/\gamma)$ first, and apply once more the H\"older inequality, this time with exponents $s/p$ and $s/(s-p)$, with $s= pr/(r-p)$, thus obtaining
\begin{equation}\label{e:finite_2}
\begin{split}
\int_0^T e^{\delta pt}\|F(t)f(t)\|_U^p \, dt 
& \le N^p\Big(\int_0^\infty \,\frac{e^{-(\eta-\delta)st}}{t^{\gamma s}}\,dt\Big)^{p/s}\,
\|f\|_{L^r(0,\infty;\cD({A^*}^\epsilon))}^p
\\[1mm]
& = C_p\|f\|_{L^r(0,\infty;\cD({A^*}^\epsilon))}^p\,,
\end{split}
\end{equation}
We note that in \eqref{e:finite_2} we used $s/p>1$, along with $s\gamma=pr\gamma/(r-p)<1$,
which readily yields the lower bound in \eqref{e:constraint-r}.
Combining \eqref{e:finite_2} with \eqref{e:finite_1} we find 
\begin{equation} \label{e:estimate-on-bddinterval}
\begin{split}
\int_0^T e^{\delta pt}\|B^*e^{A^*t}f(t)\|_U^p\,dt & \le  
2^p \int_0^T e^{\delta pt}\big( \|F(t)f(t)\|_U^p + \|G(t)f(t)\|_U^p\big)\,dt
\\[1mm]
& \le  C(p,T) \|f\|^p_{L^r(0,\infty;\cD({A^*}^\epsilon))}\,.
\end{split}
\end{equation}

\smallskip
The obtained estimate \eqref{e:estimate-on-bddinterval} pertaining to the integral on $(0,T)$
is now used to derive the following one on $(T,\infty)$:  
\begin{eqnarray}
\lefteqn{\hskip -5mm
\int_T^\infty e^{\delta pt}\|B^*e^{A^*t}f(t)\|_U^p\,dt 
= \sum_{k=1}^\infty \int_{kT}^{(k+1)T}e^{\delta pt}\|B^*e^{A^*t}f(t)\|_U^p\,dt =}
\nonumber \\
& & = \sum_{k=1}^\infty e^{\delta pkT} \int_0^T e^{\delta p\tau}
\|B^*e^{A^*\tau}e^{A^*kT}f(\tau+kT)\|_U^p\,d\tau 
\nonumber\\
& & \le c(p,T)\sum_{k=1}^\infty e^{\delta pkT}
\|e^{A^*kT}f(\cdot+kT)\|^p_{L^r(0,\infty;\cD({A^*}^\epsilon))}
\nonumber\\
& & \le c(p,T)M^p \sum_{k=1}^\infty e^{-(\omega-\delta) pkT}
\|f\|^p_{L^r(0,\infty;\cD({A^*}^\epsilon))}\,.
\label{e:convergent-series}
\end{eqnarray}
Since $\delta<\omega$, the series in \eqref{e:convergent-series} is convergent, and there exists a constant $C_p$, which depends only on $p$, such that 
\begin{equation}\label{e:tobeused-aswell}
\int_T^\infty e^{\delta pt}\|B^*e^{A^*t}f(t)\|_U^p\,dt 
\le C_p\|f\|^p_{L^r(0,\infty;\cD({A^*}^\epsilon))}\,.
\end{equation}

Assume now on $p$ and $r$ the constraints arisen so far during the proof, namely
\begin{equation*}
1\le p<\frac{1}{\gamma}\,, \qquad \frac{p}{1-\gamma p}<r<\infty\,.
\end{equation*}
The same arguments used to find \eqref{e:finite_2} provide
\begin{equation*}
\begin{split}
\int_T^\infty e^{\delta pt}\|F(t)f(t)\|_U^p\,dt 
& \le M^p \int_T^\infty t^{-\gamma p} e^{-(\eta-\delta)pt}\|f(t)\|^p_Y\,dt  
\\
& \le c(p)\|f\|^p_{L^r(0,\infty;\cD({A^*}^\epsilon))}\,,
\end{split}
\end{equation*}
which in view of \eqref{e:tobeused-aswell} implies as well 
\begin{align*}
\int_T^\infty e^{\delta pt}\|G(t)f(t)\|_U^p\,dt 
& = \int_0^\infty e^{\delta pt}\|[F(t)-B^*e^{A^*t}]f(t)\|_U^p\,dt 
\\
& \le c(p,T)\|f\|^p_{L^r(0,\infty;\cD({A^*}^\epsilon))}\,.
\end{align*}
The above estimate and \eqref{e:finite_1} finally establish \eqref{e:expGf}.

\smallskip
The proof of \eqref{e:expGf} in the case $r=+\infty$ is similar (even simpler), hence it is omitted.
\end{proof}

\subsection{Regularity of the input-to-state map and its adjoint} 
A fundamental prerequisite for all the computations performed in the paper is a detailed 
description of the regularity properties of the `input-to-state' mapping, that is the mapping 
$L$ which associates to any control function $u(\cdot)$ the solution to the state equation 
\eqref{e:state-eq} with $y_0=0$.
Namely, $L$ is defined as follows:
\begin{equation} \label{e:input-to-state}
\begin{aligned}
L:v\mapsto Lv\,, \quad Lv(t) & := \int_0^t e^{(t-s)A}Bv(s)\,ds 
\\ &\, = \int_0^t F(t-s)^*v(s)\,ds + \int_0^t G(t-s)^*v(s)\,ds 
\\ 
& \,= L_{(1)}v(t)+L_{(2)}v(t), \qquad t\ge 0\,.
\end{aligned}
\end{equation}

\smallskip
\noindent
We begin by recalling the main regularity properties of the integral operator 
$L_{(1)}$. We note that in view of the key estimate \eqref{e:keyasF} satisfied by 
the component $F(t)$, the proof of the statements of Proposition~\ref{p:stimeL1} 
below follows a pretty standard route. 
In fact, it employs the same arguments used in the study of the input-to-state map pertaining to parabolic-like dynamics or systems yielding singular estimates; see \cite{las-trig-encyclopedia} and \cite{las-trig-se-1}.
 
\begin{proposition}\label{p:stimeL1} 
Let $L_{(1)}$ be the operator defined in \eqref{e:input-to-state}. 
Then, the following regularity properties hold.
\begin{description}
\item[(i)] $L_{(1)}$ maps continuously $L^1(0,\infty;U)$ into $L^r(0,\infty;Y)$ 
for each $r\in [1,\frac1{\gamma}[$;
\item[(ii)] $L_{(1)}$ maps continuously $L^p(0,\infty;U)$ into $L^r(0,\infty;Y)$ for each 
$p\in ]1,\frac1{1-\gamma}[$ and $r\in [p,\frac{p}{1-(1-\gamma)p}]$;
\item[(iii)] $L_{(1)}$ maps continuously $L^{\frac{1}{1-\gamma}}(0,\infty;U)$ into 
$L^r(0,\infty;Y)$ for each $r\in [\frac1{1-\gamma},\infty[$;
\item[(iv)] $L_{(1)}$ maps continuously $L^p(0,\infty;U)$ into 
$L^r(0,\infty;Y)\cap C_b([0,\infty[;Y)$ for each $p\in ]\frac1{1-\gamma}, \infty]$ and $r\in [p,\infty]$.
\end{description}
\end{proposition}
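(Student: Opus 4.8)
\textbf{Proof plan for Proposition~\ref{p:stimeL1}.}

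The plan is to treat $L_{(1)}v(t)=\int_0^t F(t-s)^*v(s)\,ds$ as a convolution operator whose kernel norm satisfies $\|F(t-s)^*\|_{\cL(U,Y)}=\|F(t-s)\|_{\cL(Y,U)}\le N(t-s)^{-\gamma}e^{-\eta(t-s)}$, by the key estimate \eqref{e:keyasF}. Hence $\|L_{(1)}v(t)\|_Y \le N\int_0^t (t-s)^{-\gamma}e^{-\eta(t-s)}\|v(s)\|_U\,ds = (k * \|v(\cdot)\|_U)(t)$ where $k(\sigma)=N\sigma^{-\gamma}e^{-\eta\sigma}$. The entire proposition then reduces to mapping properties of this scalar convolution with $k$ on the half-line, which I would obtain from the classical Young (and Hardy--Littlewood--Sobolev) convolution inequalities: since the exponential factor makes $k\in L^1(0,\infty)$ \emph{and} $k\in L^m(0,\infty)$ for every $m<1/\gamma$, both the $L^1$ and the weak-$L^{1/\gamma}$ behaviour are available, and the exponential decay lets all the integrals run to $+\infty$ without loss.

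First I would record the integrability of $k$: $k\in L^m(0,\infty)$ precisely for $m\in[1,1/\gamma)$, with a uniform bound by a Gamma-function constant, and also $k$ lies in the Lorentz space $L^{1/\gamma,\infty}$. Then statement (i) follows from Young's inequality $\|k * \phi\|_{L^r}\le \|k\|_{L^r}\|\phi\|_{L^1}$ with $\phi=\|v(\cdot)\|_U$ and $r<1/\gamma$; statement (ii) follows from Young's inequality with exponents $1+\tfrac1r = \tfrac1m + \tfrac1p$, i.e. choosing $m = \big(1+\tfrac1r-\tfrac1p\big)^{-1}$, and checking that the constraints $p\in(1,\tfrac1{1-\gamma})$ and $r\in[p,\tfrac{p}{1-(1-\gamma)p}]$ are exactly equivalent to $m\in[1,1/\gamma)$ together with $r\ge p$; statement (iii) is the endpoint where $m=1$, so that $\|k*\phi\|_{L^r}\le\|k\|_{L^1}\|\phi\|_{L^r}$ directly, valid for every $r\ge \tfrac1{1-\gamma}$ (again using exponential decay to reach $r=\infty$ is not needed here, only $r<\infty$); statement (iv) is the supercritical range where $p>\tfrac1{1-\gamma}$ forces the Hölder conjugate $p'<1/\gamma$ so that $k\in L^{p'}(0,\infty)$, whence $\|L_{(1)}v(t)\|_Y\le \|k\|_{L^{p'}}\|v\|_{L^p}$ pointwise in $t$, giving the $L^\infty$ (hence every $L^r$, $r\ge p$) bound; continuity in $t$, i.e. membership in $C_b([0,\infty);Y)$, I would get from dominated convergence applied to $t\mapsto \int_0^t F(t-s)^*v(s)\,ds$ using the $L^{p'}$-integrable majorant $k$ together with strong continuity of $s\mapsto F(s)^*$ off $s=0$ — or, more cleanly, by a standard density argument approximating $v$ by continuous compactly supported functions.

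The main obstacle — really the only non-bookkeeping point — is the endpoint/borderline case in (iii) and the sharp matching of exponent ranges in (ii): one must verify that the Young exponent relation, combined with $\|k\|_{L^m}<\infty\iff m<1/\gamma$, reproduces \emph{exactly} the stated intervals for $r$, and in particular that the upper endpoint $r=\tfrac{p}{1-(1-\gamma)p}$ is attained (it corresponds to $m$ approaching, but this is the \emph{strong}-type estimate, so one uses $m$ strictly below $1/\gamma$; the endpoint $r$ for fixed $p$ corresponds to $m=\tfrac1\gamma$ only in the weak sense, but here $r$ at the endpoint pairs with $m=\big(1+\tfrac1r-\tfrac1p\big)^{-1}<\tfrac1\gamma$ strictly as long as $p>1$, so strong Young suffices). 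I expect no genuine difficulty, since the exponential weight $e^{-\eta\sigma}$ removes every integrability issue at infinity and the only singularity, at $\sigma=0$, is the integrable power $\sigma^{-\gamma}$ with $\gamma<1$; the proof is therefore essentially the translation of \eqref{e:keyasF} into Young's inequality, exactly as in the cited treatments of parabolic and singular-estimate dynamics, and I would present it at that level of detail.
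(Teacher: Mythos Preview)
Your overall strategy --- reduce to a scalar convolution with $k(\sigma)=N\sigma^{-\gamma}e^{-\eta\sigma}$ and apply Young/H\"older --- is exactly what the paper does (it cites H\"older, Tonelli, and \cite[Theorem~383]{hlp}, i.e.\ Hardy--Littlewood--Sobolev, plus interpolation). So the route is the same.

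There is, however, a genuine slip in your handling of the upper endpoint in (ii). At $r=\dfrac{p}{1-(1-\gamma)p}$ the Young exponent is
\[
\frac{1}{m}=1+\frac{1}{r}-\frac{1}{p}=1+\Big(\frac{1}{p}-(1-\gamma)\Big)-\frac{1}{p}=\gamma,
\]
so $m=1/\gamma$ \emph{exactly}, not strictly below; your parenthetical claim that ``$m<1/\gamma$ strictly as long as $p>1$, so strong Young suffices'' is simply a miscomputation. Since $k\notin L^{1/\gamma}(0,\infty)$ (the singularity $\sigma^{-1}$ at $0$ is not integrable), strong Young does \emph{not} reach that endpoint. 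You need the weak-type information you already recorded, $k\in L^{1/\gamma,\infty}$, and invoke Hardy--Littlewood--Sobolev (equivalently weak Young plus Marcinkiewicz) --- which is precisely why the paper appeals to \cite[Theorem~383]{hlp} for (ii) and then interpolates with the $L^p\to L^p$ bound. Your treatment of (iii) is also garbled: ``$m=1$'' and $\|k\|_{L^1}\|\phi\|_{L^r}$ only gives $r=p$; for $r>p=\tfrac{1}{1-\gamma}$ you again need Young with $m=(\gamma+1/r)^{-1}\in(1,1/\gamma)$, which is fine but is not what you wrote. Fix these two points and the argument goes through.
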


\begin{proof}
First, an easy application of H\"older inequality and Tonelli Theorem shows that $L_{(1)}$ maps continuously $L^p(0,\infty;U)$ into $L^p(0,\infty;Y)$ for each $p\in [1,\infty]$. Next, property (i) follows directly by H\"older inequality; property (ii) is a consequence of \cite[Theorem~383]{hlp} and interpolation, and finally properties (iii) and (iv) follow again by 
the H\"older inequality and interpolation. 
\end{proof}

\smallskip
The analysis of the operator $L_{(2)}$ is more tricky.
It exploits the distinct regularity properties of the component $G(t)$ pointed out
in Proposition~\ref{p:expGf}.
\begin{proposition}\label{p:stimeL2} 
The following properties hold true:
\begin{description}
\item[(i)] 
$L_{(2)}$ maps continuously $L^1(0,\infty;U)$ into $L^s(0,\infty;[\cD({A^*}^\epsilon)]')$ 
for each $s\in[1,1/\gamma)$;
\item[(ii)] 
$L_{(2)}$ maps continuously $L^p(0,\infty;U)$ into $L^r(0,\infty;Y)\cap C_b([0,\infty[;Y)$ for each $p\in (1,\infty]$ and $r\in [p,\infty]$. 
\end{description}
\end{proposition}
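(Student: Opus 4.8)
The plan is to treat the two parts separately, exploiting the decomposition $L_{(2)}v(t)=\int_0^t G(t-s)^*v(s)\,ds$ together with the regularity of $G$ recorded in Hypotheses~\ref{h:ip2}(ii)--(iii) and, crucially, the estimate of Proposition~\ref{p:expGf} (applied to the adjoint map). For part~(i), I would first observe that by Hypothesis~\ref{h:ip2}(iii)(a) the operator $G(\cdot){A^*}^{-\epsilon}$ is bounded from $Y$ into $C([0,T];U)$, hence the dual operator ${A}^{-\epsilon}G(\cdot)^*$ maps $U$ continuously into $L^1(0,T;Y)$ (in fact into $C$-dual type spaces), so that $G(t-s)^*$ acts boundedly from $U$ into $[\cD({A^*}^\epsilon)]'$ with a bound that, for $t-s>T$, decays exponentially by the semigroup stability embedded in the decomposition $B^*e^{A^*t}=F(t)+G(t)$ (combine Proposition~\ref{p:expG} with \eqref{e:keyasF}). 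A Young-type convolution inequality, using that $t\mapsto\|G(t)^*\|_{\cL(U,[\cD({A^*}^\epsilon)]')}\in L^s_{\mathrm{loc}}$ for every $s<1/\gamma$ (this is precisely what Proposition~\ref{p:expGf} yields when read on the dual side, with $F$ contributing the $t^{-\gamma}$ singularity and $G$ contributing an $L^p$-for-all-$p$ bound), gives $L_{(2)}:L^1(0,\infty;U)\to L^s(0,\infty;[\cD({A^*}^\epsilon)]')$ for $s\in[1,1/\gamma)$, the exponential weights taking care of the behaviour at infinity.

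For part~(ii), the key point is that when $p>1$ we can afford to lose the endpoint and work directly in $Y$ rather than in $[\cD({A^*}^\epsilon)]'$. I would write, for $v\in L^p(0,\infty;U)$,
\begin{equation*}
L_{(2)}v(t)=\int_0^t G(t-s)^*v(s)\,ds
\end{equation*}
and estimate $\|L_{(2)}v(t)\|_Y$ by duality: for $\phi\in\cD({A^*}^\epsilon)$ with $\|\phi\|_{\cD({A^*}^\epsilon)}\le 1$ one has $(L_{(2)}v(t),\phi)_Y=\int_0^t (v(s),G(t-s)\phi)_U\,ds$, and since $\|G(t-s)\phi\|_U\le \|G(t-s){A^*}^{-\epsilon}\|_{\cL(Y,U)}\|{A^*}^\epsilon\phi\|_Y$ is bounded on $[0,T]$ and exponentially small beyond $T$, the function $\sigma\mapsto\|G(\sigma){A^*}^{-\epsilon}\|$ lies in $L^{p'}(0,\infty)$ for every $p'<\infty$ (by Proposition~\ref{p:expG}), hence in particular in $L^{p'}$ for the conjugate exponent of $p$. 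This is not quite enough to land in $Y$ uniformly in $\phi$; the correct route is instead to split $G=(B^*e^{A^*\cdot}-F)$ and apply Proposition~\ref{p:expGf} with the roles of $f$ and the kernel exchanged — more precisely, one invokes the $L^q$-regularity of $B^*e^{A^*\cdot}{A^*}^\epsilon$ from Proposition~\ref{p:expBA} together with Hypothesis~\ref{h:ip2}(iii)(a) to recover that the adjoint kernel maps $U\to Y$ after one integration, so that a Young inequality with the $L^1_\delta$ (respectively $L^{p'}_\delta$) summable kernel norm gives $L_{(2)}v\in L^r(0,\infty;Y)$ for all $r\in[p,\infty]$; boundedness and continuity into $Y$ at $t=\infty$, i.e. membership in $C_b([0,\infty);Y)$, then follows from the dominated convergence theorem applied to the convolution, the exponential weights forcing decay.

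The main obstacle I anticipate is the borderline case and the precise functional-analytic bookkeeping in part~(ii): $G(t)$ is only known to be bounded on $\cD({A^*}^\epsilon)$, not on $Y$, so to obtain values in $Y$ (rather than merely in $[\cD({A^*}^\epsilon)]'$) one must genuinely use that the \emph{convolution} smooths — i.e.\ that the composition of the $t^{-\gamma}$-type singularity from $F$ with an $L^p$ datum ($p>1$) lands back in $Y$ with room to spare, which is exactly the content of Propositions~\ref{p:stimeL1} and~\ref{p:expGf} but now applied on the $L_{(2)}$ side. Getting the exponents $r\in[p,\infty]$ simultaneously, including $r=\infty$ together with the $C_b$-continuity, will require an interpolation argument (as in the proof of Proposition~\ref{p:stimeL1}) combined with a direct uniform estimate for $r=\infty$; the exponential weights $e^{\delta t}$ with $\delta\in[0,\omega\wedge\eta)$ are what make all the relevant integrals over $(0,\infty)$ converge, so care is needed to keep $\delta$ fixed and positive throughout and only let it be "sufficiently small" where the perturbed estimates demand it.
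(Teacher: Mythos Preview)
Your approach for part~(i) is in the right spirit---duality is indeed the tool---but the execution is muddled. You claim that $t\mapsto\|G(t)^*\|_{\cL(U,[\cD({A^*}^\epsilon)]')}\in L^s_{\mathrm{loc}}$ for $s<1/\gamma$ ``with $F$ contributing the $t^{-\gamma}$ singularity''; but $F$ is not part of $L_{(2)}$ at all, and the hypotheses do not give a pointwise-in-$t$ operator-norm bound on $G(t){A^*}^{-\epsilon}$ beyond $[0,T]$, so a Young-type argument on kernel norms is not directly justified. The paper's route is the clean duality argument you also gesture at: pair $L_{(2)}u$ with $w\in L^r(0,\infty;\cD({A^*}^\epsilon))$, use Fubini, and bound $\int_\tau^\infty\|G(\sigma)w(\sigma+\tau)\|_U\,d\sigma$ via Proposition~\ref{p:expGf} (with $p=1$ and an exponential weight inserted to make the H\"older step work). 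The restriction $s<1/\gamma$ then emerges from the constraint \eqref{e:constraint-r} on $r$ in Proposition~\ref{p:expGf}.

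For part~(ii) there is a genuine gap: you test against $\phi\in\cD({A^*}^\epsilon)$, correctly realize this only gives values in $[\cD({A^*}^\epsilon)]'$, and then propose to recover $Y$-valuedness via the splitting $G=B^*e^{A^*\cdot}-F$ and Proposition~\ref{p:expBA}. That detour is neither needed nor clearly sufficient. The key ingredient you are missing is Proposition~\ref{p:expG}, which gives $e^{\delta\cdot}G(\cdot)\in\cL(Y,L^{p'}(0,\infty;U))$ for every $p'<\infty$. This lets you pair directly with $\varphi(t)\in Y$ (not merely $\cD({A^*}^\epsilon)$): for $v\in L^p(0,\infty;U)$ with $p>1$,
\[
\Big|\int_0^\infty (L_{(2)}v(t),\varphi(t))_Y\,dt\Big|
\le \int_0^\infty \|e^{-\delta(t-\cdot)}v(\cdot)\|_{L^p(0,t;U)}\,\|e^{\delta\cdot}G(\cdot)\varphi(t)\|_{L^{p'}(0,t;U)}\,dt,
\]
and the inner $L^{p'}$ norm is bounded by $c(p)\|\varphi(t)\|_Y$. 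H\"older and Tonelli then yield both the $L^p\to L^p$ and $L^p\to L^\infty$ bounds, with continuity handled separately and the full range $r\in[p,\infty]$ by interpolation. Without Proposition~\ref{p:expG} your argument never lands in $Y$.
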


\begin{proof}
{\bf (i)} 
The proof is based on a duality argument.
Let $u\in L^1(0,\infty,U)$ be given and let $w\in L^{r}(0,\infty;\cD({A^*}^\epsilon))$,
with the summability exponent $r$ to be choosen appropriately later.
We take the duality pairing $\langle (L_{(2)}u)(t),w(t)\rangle_{\cD({A^*}^\epsilon)}$,
integrate on $(0,\infty)$ and rewrite as follows: 
\begin{equation*}
\begin{aligned}
\int_0^\infty \langle L_{(2)}u(t),w(t)\rangle_{\cD({A^*}^\epsilon)}\,dt 
& =\int_0^\infty \big\langle 
\int_0^t G(t-\tau)^*u(\tau)\,d\tau,w(t)\big\rangle_{\cD({A^*}^\epsilon)}\,dt 
\\
& = \int_0^\infty \int_\tau^\infty \big(u(\tau), G(t-\tau)w(t)\big)_U\,dt\,d\tau
\\
& = \int_0^\infty \big(u(\tau),\int_\tau^\infty G(\sigma)w(\sigma+\tau)\,d\sigma \big)_U
\,d\tau\,,
\end{aligned}
\end{equation*}
which gives 
\begin{equation} \label{e:preliminary-estimate}
\Big|\int_0^\infty \langle L_{(2)}u(t),w(t)\rangle_{\cD({A^*}^\epsilon)}\,dt\Big|
\le \|u\|_{L^1(0,\infty;U)}
\,\sup_{\tau\ge 0}\Big[\int_\tau^\infty \|G(\sigma)w(\sigma+\tau)\|_U\, d\sigma\Big]\,.
\end{equation}
We then proceed with the estimate of the integral on the right hand side of 
\eqref{e:preliminary-estimate}.
We insert the exponential weight $e^{\delta t}$, apply the H\"older inequality first ($p'$ denotes the conjugate exponent of $p$) and utilize the estimate \eqref{e:expGf} of Proposition~\ref{p:expGf} next, to find   
\begin{equation}\label{e:dual-estimate}
\begin{aligned}
& \Big|\int_0^\infty \langle L_{(2)}u(t),w(t)\rangle_{\cD({A^*}^\epsilon)}\,dt\Big|
\\
& \quad \le \|u\|_{L^1(0,\infty;U)}\,\Big(\int_0^\infty e^{-\delta p'\sigma}\,d\sigma\Big)^{1/p'}
\, \sup_{\tau\ge 0}\Big[\int_{\tau}^\infty e^{\delta p\sigma}
\|G(\sigma)w(\sigma+\tau)\|^p_U\, d\sigma\Big]^{1/p}
\\
& \quad \le C_T\,\|u\|_{L^1(0,\infty;U)}
\,\sup_{\tau\ge 0}\|w\|_{L^r(\tau,\infty;\cD({A^*}^\epsilon))}
\\
& \quad = C_T\,\|u\|_{L^1(0,\infty;U)}\,\|w\|_{L^r(0,\infty;\cD({A^*}^\epsilon))}\,.
\end{aligned}
\end{equation}
Note carefully that Proposition~\ref{p:expGf} applies with any $p\in [1,1/\gamma)$ and
$w\in L^r(\tau,\infty;\cD({A^*}^\epsilon))$, provided the summability exponent $r$ 
fulfils the constraint \eqref{e:constraint-r}.
Therefore, \eqref{e:dual-estimate} implies by duality 
\begin{equation*}
L_{(2)}\in \cL(L^1(0,\infty;U),L^s(0,\infty;[\cD({A^*}^\epsilon)]'))\,,
\end{equation*}
where $s$ is the conjugate exponent of $r$ and the estimate
\begin{equation*}
\|L_{(2)}u\|_{L^s(0,\infty;[\cD({A^*}^\epsilon)]'))}\le C\,\|u\|_{L^1(0,\infty;U)}\,,
\end{equation*}
holds with a constant $C$ depending only on $p$ and $T$.
Thus, \eqref{e:constraint-r} readily implies 
\begin{equation*}
1\le s< \frac{p}{p-(1-\gamma p)}\,,
\end{equation*}  
while it is elementary to check that 
\begin{equation*}
\max_{p\in [1,\frac1{\gamma})}\,\frac{p}{p-(1-\gamma p)} =\frac 1{\gamma}\,,
\end{equation*}
which confirms the constraint $s\in [1,1/\gamma)$ and thus completes the proof of part (i).

\smallskip
\noindent
{\bf (ii)} For fixed $p\in ]1,\infty[$, using Proposition~\ref{p:expG} and Hypothesis~\ref{h:ip2}(ii), we compute for any $\varphi\in C^\infty_0(]0,\infty[;Y)$:
\beyy
\lefteqn{\left| \int_0^\infty \left(L_{(2)}v(t),\varphi(t)\right)_Y \,dt\right| = \left| \int_0^\infty \int_0^t \left(v(s),G(t-s)\varphi(t)\right)_U \,dsdt \right| \le} \\
& & \le \int_0^\infty \int_0^t \|e^{-\delta(t-\cdot)}v(\cdot)\|_{L^p(0,t;U)} \|e^{\delta \cdot} G(\cdot)\varphi(t)\|_{L^{p'}(0,t;U)}\,dsdt \le \\
& & \le c(p) \int_0^\infty \|e^{-\delta(t-\cdot)}v(\cdot)\|_{L^p(0,t;U)}\|\varphi(t)\|_Y\,dt. 
\eeyy
From here, using H\"older inequality and Tonelli Theorem, it follows easily that 
$$\left| \int_0^\infty \left(L_{(2)}v(t),\varphi(t)\right)_Y \,dt\right| \le \|v\|_{L^p(0,\infty;U)} \|\varphi\|_{L^{p'}(0,\infty;Y)},$$
as well as
$$\left| \int_0^\infty \left(L_{(2)}v(t),\varphi(t)\right)_Y \,dt\right| \le \|v\|_{L^p(0,\infty;U)} \|\varphi\|_{L^1(0,\infty;Y)},$$
which proves, by density, that $L_{(2)}v\in L^p(0,\infty;U)\cap L^\infty(0,\infty;U)$. 
In addition, the argument of \cite[Proposition~B.2]{abl-2} yields continuity. 
The result then follows by interpolation.
\end{proof}

The major regularity properties of $L$ are collected in the following Proposition. 

\begin{proposition}\label{p:stimeL} 
The operator $L$ enjoys the following properties:
\begin{description}
\item[(i)] $L$ maps continuously $L^1(0,\infty;U)$ into $L^r(0,\infty;[\cD({A^*}^\epsilon)]')$ for each $r\in [1,\frac1{\gamma}[$; 
\item[(ii)] $L$ maps continuously $L^p(0,\infty;U)$ into $L^r(0,\infty;Y)$ for each $p\in ]1,\frac1{1-\gamma}[$ and $r\in [p,\frac{p}{1-(1-\gamma)p}]$;
\item[(iii)] \hspace{-1mm}$L$ maps continuously $L^{\frac{1}{1-\gamma}}(0,\infty;U)$ into $L^r(0,\infty;Y)$ for each $r\hspace{-1mm}\in\hspace{-1mm} [\frac1{1-\gamma},\infty[$;
\item[(iv)] $L$ maps continuously $L^p(0,\infty;U)$ into $L^r(0,\infty;Y)\cap C_b([0,\infty[;Y)$ for each $p\in ]\frac1{1-\gamma}, \infty]$ and $r\in [p,\infty]$.
\item[(v)] $L$ maps continuously $L^r(0,\infty;U)$ with $r\in [q',\infty]$ into 
$C_b([0,\infty[;\cD(A^\epsilon))$.
\end{description}
\end{proposition}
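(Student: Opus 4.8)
\textbf{Proof proposal for Proposition~\ref{p:stimeL}(v).}
The plan is to split $L = L_{(1)} + L_{(2)}$ as in \eqref{e:input-to-state} and to estimate the $\cD(A^\epsilon)$-norm of each piece, i.e.\ to bound $\|A^\epsilon L_{(i)}v(t)\|_Y$ uniformly in $t$ and show continuity in $t$. For the first piece, write
\begin{equation*}
A^\epsilon L_{(1)}v(t) = \int_0^t A^\epsilon e^{(t-s)A}Bv(s)\,ds = \int_0^t [A^\epsilon e^{(t-s)A/2}]\,[e^{(t-s)A/2}B]\,v(s)\,ds\,,
\end{equation*}
and use the analytic-type bound $\|A^\epsilon e^{\sigma A/2}\|_{\cL(Y)}\le c\,\sigma^{-\epsilon}e^{-\omega\sigma/2}$ — which here must be read through the decomposition: $e^{\sigma A}B = F(\sigma)^* + G(\sigma)^*$, so $A^\epsilon e^{\sigma A}B$ is controlled by combining \eqref{e:keyasF} (giving a $\sigma^{-\gamma-\epsilon}$-type singularity after extracting $A^\epsilon$ off a half-semigroup) with \eqref{e:keyasGb}. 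More precisely, the natural route is to observe $B^*e^{A^*\sigma}A^{*\epsilon}\in L^q(0,T;\cL(Y,U))$ by Hypothesis~\ref{h:ip2}(iii)(c) and Proposition~\ref{p:expBA}, hence by duality $A^\epsilon e^{\sigma A}B\in L^q(0,\infty;\cL(U,Y))$ with an exponential weight; then $A^\epsilon Lv(t) = \int_0^t [A^\epsilon e^{(t-s)A}B]v(s)\,ds$ is a convolution of an $L^q$-kernel with an $L^r$-function, and Young's inequality gives a bounded (indeed continuous, by density of continuous functions and dominated convergence) $Y$-valued function provided $\tfrac1q + \tfrac1r \ge 1$, i.e.\ $r\ge q'$. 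The exponential weights coming from \eqref{e:stability}, \eqref{e:keyasF} and Proposition~\ref{p:expBA} make the convolution integrable on all of $(0,\infty)$, not just on $(0,T)$, and also yield membership in $C_b$ (the tail decays).

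Concretely, I would carry out the steps in this order. First, record that $A^\epsilon e^{A\cdot}B$ (a priori $U\to[\cD(A^*)]'$) extends to an element of $L^q_\delta(0,\infty;\cL(U,Y))$ for small $\delta>0$: this is exactly the adjoint statement of Proposition~\ref{p:expBA} together with the exponential-weight bookkeeping in its proof, plus the observation that $A^{*\epsilon}$ and $A^\epsilon$ are closed and the duality $\langle A^\epsilon e^{A\sigma}Bu,x\rangle = (u, B^*e^{A^*\sigma}A^{*\epsilon}x)_U$ holds on the relevant dense sets. Second, for $v\in L^r(0,\infty;U)$ with $r\in[q',\infty]$, write $A^\epsilon Lv = \kappa * v$ with $\kappa(\sigma):=A^\epsilon e^{A\sigma}B\in L^q(0,\infty;\cL(U,Y))$ and apply Young's convolution inequality in the form $\|\kappa*v\|_{L^\infty(0,\infty;Y)}\le \|\kappa\|_{L^q}\|v\|_{L^{q'}}\le\|\kappa\|_{L^q}\|v\|_{L^r}$ (the last inequality requires $v$ supported on the whole line with the exponential weight absorbing any loss when $r>q'$; alternatively interpolate between $r=q'$ and $r=\infty$). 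Third, upgrade $L^\infty$ to $C_b$: continuity of $t\mapsto A^\epsilon Lv(t)$ follows from continuity of translation in $L^q$ for the kernel (standard) combined with a density argument ($v$ continuous with compact support), and boundedness as $t\to\infty$ follows from the exponential decay of $\kappa$, which confines the mass of the convolution near $s=t$. Fourth, assemble: $Lv(t)\in\cD(A^\epsilon)$ for every $t$, the map is in $C_b([0,\infty);\cD(A^\epsilon))$, and the bound is uniform in $\|v\|_{L^r}$.

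The main obstacle I anticipate is \textbf{step one}: making rigorous that $A^\epsilon e^{A\cdot}B$ genuinely defines an $L^q$-in-time, $\cL(U,Y)$-valued kernel rather than merely a kernel with values in the extrapolation space $[\cD(A^*)]'$ or in $[\cD(A^{*\epsilon})]'$. The decomposition $e^{A^*\sigma}B^* = F(\sigma) + G(\sigma)$ is only assumed on $\cD(A^*)$, and Hypothesis~\ref{h:ip2}(iii)(c) gives the extension of $B^*e^{A^*\cdot}A^{*\epsilon}$ to $\cL(Y,L^q(0,T;U))$ — so one must carefully transfer this local-in-time, $A^{*\epsilon}$-weighted statement into a global-in-time statement about $A^\epsilon e^{A\sigma}B$ by the exponential-decay iteration already used in Propositions~\ref{p:expG}, \ref{p:expBA}, and \ref{p:expGf}, and then dualize. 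There is also a bookkeeping subtlety at the endpoint $r=q'$ versus the open range: Young's inequality at the borderline $\tfrac1q+\tfrac1{q'}=1$ does give an $L^\infty$ (indeed $C_0$, hence $C_b$) output, so the closed endpoint is fine, but one should double-check that the exponential weight is genuinely needed and available (it is, since $\delta$ can be taken in $(0,\omega\wedge\eta)$, small). Once the kernel $\kappa$ is in hand, the rest is routine convolution analysis.
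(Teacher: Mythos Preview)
Your proposal and the paper's proof rest on the same key ingredient---Proposition~\ref{p:expBA}, i.e.\ the global-in-time version of Hypothesis~\ref{h:ip2}(iii)(c)---and the same H\"older-type pairing of an $L^q$ object against an $L^{q'}$ (or better) input. In that sense the strategy is correct. However, the paper executes it in a way that dissolves precisely the obstacle you flagged as ``step one'': rather than trying to show that $\kappa(\sigma)=A^\epsilon e^{A\sigma}B$ is an $L^q_\delta(0,\infty;\cL(U,Y))$-valued kernel and then invoking Young's inequality, the paper works in duality from the start. For $z\in\cD({A^*}^\epsilon)$ one computes
\[
(Lu(t),{A^*}^\epsilon z)_Y=\int_0^t \big(e^{-\delta s}u(t-s),\,e^{\delta s}B^*e^{A^*s}{A^*}^\epsilon z\big)_U\,ds
\]
and bounds this by $\|e^{-\delta\cdot}u(t-\cdot)\|_{L^{q'}(0,t;U)}\,\|e^{\delta\cdot}B^*e^{A^*\cdot}{A^*}^\epsilon z\|_{L^q(0,\infty;U)}\le c(q,r)\,\|u\|_{L^r(0,\infty;U)}\|z\|_Y$, the second factor being controlled by Proposition~\ref{p:expBA}. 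This never requires $\sigma\mapsto\|A^\epsilon e^{A\sigma}B\|_{\cL(U,Y)}$ to lie in $L^q$; it only needs, for each fixed $z$, that $s\mapsto B^*e^{A^*s}{A^*}^\epsilon z$ lies in $L^q_\delta(0,\infty;U)$ with norm $\le c\|z\|_Y$, which is exactly what Proposition~\ref{p:expBA} gives. Continuity in $t$ is then obtained by the same pairing applied to $(Lu(t)-Lu(\tau),{A^*}^\epsilon z)_Y$, using continuity of translations in $L^r$.

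Your worry about ``step one'' is therefore well-founded, and the duality route is the fix rather than a workaround: the adjoint of an operator in $\cL(Y,L^q(0,\infty;U))$ lands in $\cL(L^{q'}(0,\infty;U),Y)$, which is strictly weaker than membership of the kernel in $L^q(0,\infty;\cL(U,Y))$, so the Young-inequality-for-operator-kernels route would need an additional argument not supplied by the hypotheses. Also, discard the opening lines about splitting $L=L_{(1)}+L_{(2)}$ and the ``analytic-type bound'' $\|A^\epsilon e^{\sigma A/2}\|_{\cL(Y)}\le c\,\sigma^{-\epsilon}e^{-\omega\sigma/2}$: the semigroup $e^{At}$ is \emph{not} analytic in this setting, so no such pointwise estimate on fractional powers is available; you correctly abandoned this in favor of the route via Hypothesis~\ref{h:ip2}(iii)(c).
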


\begin{proof}
Parts (i)-(ii)-(iii)-(iv) readily follow combining the results established 
in Propositions~\ref{p:stimeL1} and \ref{p:stimeL2}.

In order to prove (v), let $r\ge q'$ and let $u\in L^r(0,\infty;U)$ be given. 
Then, by Proposition~\ref{p:expBA}, for each $z\in \cD({A^*}^\epsilon)$ and $t\ge 0$ we have:
\beyy
\left| (Lu(t),{A^*}^\epsilon z)_Y \right| & = & \left|\int_0^t (u(t-s),B^*e^{A^*s}{A^*}^\epsilon z)_U\,ds \right| = \\
& = & \left|\int_0^t (e^{-\delta s}u(t-s),e^{\delta s}B^*e^{A^*s}{A^*}^\epsilon z)_U\,ds \right| \le \\
& \le & c \|e^{-\delta \cdot} u(t-\cdot)\|_{L^{q'}(0,t;U)}\| e^{\delta \cdot}B^*e^{A^*\cdot}{A^*}^\epsilon z\|_{L^q(0,t;U)} \le \\
& \le & c(q,r) \|u\|_{L^r(0,\infty;U)}\|z\|_Y\,,
\eeyy
so that $Lu\in L^\infty(0,\infty;\cD(A^\epsilon))$. Now if $t>\tau\ge 0$ we also have
\beyy
\lefteqn{\left| (Lu(t)-Lu(\tau),{A^*}^\epsilon z)_Y \right|=}\\
& & =\left|\int_0^t (u(t-s),B^*e^{A^*s}{A^*}^\epsilon z)_U\,ds - \int_0^\tau (u(\tau-s),B^*e^{A^*s}{A^*}^\epsilon z)_U\,ds \right| = \\
& & =\left|\int_\tau^t (e^{-\delta s}u(t-s),e^{\delta s}B^*e^{A^*s}{A^*}^\epsilon z)_U\,ds \right| + \\
& & + \left|\int_0^\tau e^{-\delta s}(u(t-s)-u(\tau-s),e^{\delta s}B^*e^{A^*s}{A^*}^\epsilon z)_U\,ds \right|\le \\
& & \le c(q,r) \left(\|e^{-\delta \cdot}u(t-\cdot)\|_{L^{q'}(\tau,t;U)}+\|e^{-\delta \cdot}(u(t-\cdot)-u(\tau-\cdot))\|_{L^{q'}(0,\tau;U)}\right)\cdot 
\\
& & \qquad \cdot \| e^{\delta \cdot}B^*e^{A^*\cdot}{A^*}^\epsilon z\|_{L^q(0,t;U)} \le \\
& & \le c(q,r) \left( \|u\|_{L^r(0,t-\tau;U)} + \|u(\cdot +t-\tau)-u(\cdot)\|_{L^r(0,\infty;U)}\right)\|z\|_Y\,,
\eeyy
and the result readily follows.

\end{proof}

We recall that the adjoint $L^*$ of the operator $L$ is defined as follows:
\begin{equation} \label{e:L-adjoint}
L^*:v\mapsto L^*v\,, \quad L^*v(s)= \int_s^\infty B^*e^{A^*(t-s)}v(t)\,dt, \qquad s\ge 0\,.
\end{equation}
As it is well known, the regularity analysis of $L^*$ is also central to the study of the optimal control problem.

\begin{proposition}\label{p:stimeL*} 
For the operator $L^*$ defined in \eqref{e:L-adjoint} the following properties hold true.

\begin{description}

\item[(i)] $L^*$ maps continuously $L^1(0,\infty;Y)$ into $L^r(0,\infty;U)$ for each $r\in [1,\frac1{\gamma}[$;

\item[(ii)] $L^*$ maps continuously $L^p(0,\infty;Y)$ into $L^r(0,\infty;U)$ for each $p\in ]1,\frac1{1-\gamma}[$ and $r\in [p,\frac{p}{1-(1-\gamma)p}]$;

\item[(iii)] $L^*$ maps continuously $L^p(0,\infty;Y)$ into $L^r(0,\infty;U)$ for each $p\in [\frac1{1-\gamma},\infty[$ and $r\in [p,\infty[$.

\item[(iv)] $L^*$ maps continuously $L^p(0,\infty;\cD({A^*}^{\epsilon}))$ 
into $L^\infty(0,\infty;U)$ for any $p\in ]\frac1{1-\gamma},\infty]$;

\item[(v)] $L^*$ maps continuously $L^1(0,\infty;[\cD(A^{\epsilon})]')$ into $L^r(0,\infty;U)$ for all $r\in [1,q]$.
\end{description}
\end{proposition}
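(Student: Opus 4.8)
The plan is to establish the estimate
$$\|L^*v\|_{L^r(0,\infty;U)}\le C_r\,\|v\|_{L^1(0,\infty;[\cD(A^\epsilon)]')}\qquad\forall\, r\in[1,q]$$
by combining Proposition~\ref{p:expBA} with the integral (Minkowski) form of Minkowski's inequality applied to the representation \eqref{e:L-adjoint} of $L^*$. The restriction $r\le q$ will enter at exactly one point, and the argument will be uniform in $r\in[1,q]$, reaching in particular the endpoint $r=1$.

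First I would record the building block. Fixing $\delta\in(0,\omega\wedge\eta)$, Proposition~\ref{p:expBA} gives $e^{\delta\cdot}B^*e^{A^*\cdot}{A^*}^\epsilon\in\cL(Y,L^q(0,\infty;U))$; through the canonical identification of $[\cD(A^\epsilon)]'$ with the completion of $Y$ in the norm $\|{A^*}^{-\epsilon}\cdot\|_Y$ (so that ${A^*}^{-\epsilon}$ extends to an isometric isomorphism $[\cD(A^\epsilon)]'\to Y$), this says precisely that $\xi\mapsto e^{\delta\cdot}B^*e^{A^*\cdot}\xi$ extends to a bounded map $[\cD(A^\epsilon)]'\to L^q(0,\infty;U)$. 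A generalized H\"older inequality with the exponents $rq/(q-r)$ and $q$ then yields, for every $\xi\in[\cD(A^\epsilon)]'$,
$$\|B^*e^{A^*\cdot}\xi\|_{L^r(0,\infty;U)}\le\|e^{-\delta\cdot}\|_{L^{rq/(q-r)}(0,\infty)}\,\|e^{\delta\cdot}B^*e^{A^*\cdot}\xi\|_{L^q(0,\infty;U)}\le C_r\,\|\xi\|_{[\cD(A^\epsilon)]'},$$
the constant $C_r$ being finite precisely because $\delta>0$ and $r\le q$ (for $r=q$ the weight $e^{-\delta\cdot}$ is simply replaced by the trivial $L^\infty$-bound). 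This is the unique place where $r\le q$ is used.

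Next I would run Minkowski's integral inequality on \eqref{e:L-adjoint}. Writing $L^*v(s)=\int_0^\infty\mathbf{1}_{\{t>s\}}\,B^*e^{A^*(t-s)}v(t)\,dt$ and taking the $L^r$-norm in $s$ under the integral sign gives, after the substitution $\sigma=t-s$,
$$\|L^*v\|_{L^r(0,\infty;U)}\le\int_0^\infty\Big(\int_0^t\|B^*e^{A^*(t-s)}v(t)\|_U^r\,ds\Big)^{1/r}dt=\int_0^\infty\Big(\int_0^t\|B^*e^{A^*\sigma}v(t)\|_U^r\,d\sigma\Big)^{1/r}dt.$$
Enlarging the inner domain to $(0,\infty)$ and inserting the estimate from the previous step with $\xi=v(t)$ bounds the right-hand side by $C_r\int_0^\infty\|v(t)\|_{[\cD(A^\epsilon)]'}\,dt=C_r\|v\|_{L^1(0,\infty;[\cD(A^\epsilon)]')}$, which is the assertion; as usual, finiteness of this majorant also legitimizes the a.e.\ convergence of the integral defining $L^*v(s)$.

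I expect the only genuinely delicate point to be the interpretation of $B^*e^{A^*\sigma}v(t)$ when $v(t)$ lies only in $[\cD(A^\epsilon)]'$: this object is not defined pointwise in $\sigma$, but only as the (exponentially weighted) $L^q$-in-$\sigma$ element produced by Proposition~\ref{p:expBA} together with the extrapolation identification above, so the manipulations must first be performed for $v$ taking values in a dense subspace such as $\cD({A^*}^{1+\epsilon})$ (where $B^*e^{A^*\cdot}{A^*}^\epsilon$ is classically defined) and then passed to the limit by density. An alternative, essentially equivalent, route would be a pure duality argument: for $r\in(1,q]$ one recovers $L^*$ as the Banach-space adjoint of $L\in\cL(L^{r'}(0,\infty;U),L^\infty(0,\infty;\cD(A^\epsilon)))$ given by Proposition~\ref{p:stimeL}(v), composed with the isometric embedding $L^1(0,\infty;[\cD(A^\epsilon)]')\hookrightarrow(L^\infty(0,\infty;\cD(A^\epsilon)))'$; this, however, misses the endpoint $r=1$ (since $q'>1$), whence the Minkowski argument is preferable.
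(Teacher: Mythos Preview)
Your proof of part~(v) is correct, and it takes a genuinely different route from the paper's. The paper argues by pure duality: it invokes Proposition~\ref{p:stimeL}(v), namely $L\in\cL(L^r(0,\infty;U),C_b([0,\infty[;\cD(A^\epsilon)))$ for $r\in[q',\infty]$, writes $v={A^*}^\epsilon w$ with $w\in L^1(0,\infty;Y)$, and bounds $\int_0^\infty(L^*v,u)_U\,dt=\int_0^\infty(w,A^\epsilon Lu)_Y\,dt$ by $\|w\|_{L^1}\|Lu\|_{L^\infty(\cD(A^\epsilon))}\le c_r\|v\|_{L^1([\cD(A^\epsilon)]')}\|u\|_{L^r}$, and then reads off $L^*v\in L^{r'}$. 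Your argument instead works directly on the kernel: you use Proposition~\ref{p:expBA} to bound $\|B^*e^{A^*\cdot}\xi\|_{L^r(0,\infty;U)}$ uniformly for $\xi\in[\cD(A^\epsilon)]'$ and $r\le q$ (the exponential weight being what buys the drop from $L^q$ to any $L^r$), and then push the $L^r$-norm through the $t$-integral via Minkowski. Both proofs ultimately rest on the same ingredient, Hypothesis~\ref{h:ip2}(iii)(c) (through Proposition~\ref{p:expBA}, which also underlies Proposition~\ref{p:stimeL}(v)); the difference is organizational. What your approach buys is a clean treatment of the endpoint $r=1$: as you correctly observe, the duality route identifies $L^*v$ only as an element of $(L^\infty(0,\infty;U))'$ when $r'=\infty$, which does not immediately yield $L^*v\in L^1$, whereas Minkowski gives $r=1$ with no extra work. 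Your handling of the extrapolated meaning of $B^*e^{A^*\sigma}v(t)$ via density is the right way to make the computation rigorous.
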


\begin{proof}
It follows by Proposition~\ref{p:stimeL} by duality: indeed, we have for each $p\ge 1$
\begin{equation*}
\left|\int_0^\infty (L^*v(t),u(t))_U\,dt\right| =\left|\int_0^\infty (v(t),Lu(t))_U\,dt\right| \le \|v\|_{L^p(0,\infty;U)}\|Lu\|_{L^{p'}(0,\infty;U)}\,.
\end{equation*}
Now to prove (i) we take $p=1$, so that $p'=\infty$, and by Proposition~\ref{p:stimeL}(iv) we get
\begin{equation*}
\Big|\int_0^\infty (L^*v(t),u(t))_U\,dt\Big| \le 
c\,\|v\|_{L^1(0,\infty;U)}\|u\|_{L^{r'}(0,\infty;U)} 
\quad \forall r'\in \Big(\frac1{1-\gamma},\infty\Big]\,,
\end{equation*}
which means $L^*v\in L^r(0,\infty;U)$ for each $r\in [1,1/\gamma)$. 
\\
Similarly, to prove (ii) we take $p\in (1,\frac1{1-\gamma})$, so that 
$p'\in (\frac1{\gamma},\infty)$: then, since $p'=\frac{r'}{1-(1-\gamma)r'}$ if and only if 
$r'=\frac{p'}{1+(1-\gamma)p'}$,
Proposition~\ref{p:stimeL}(ii) yields 
\begin{equation*}
\Big|\int_0^\infty (L^*v(t),u(t))_U\,dt\Big| \le 
\, \|v\|_{L^p(0,\infty;U)}\|u\|_{L^{r'}(0,\infty;U)} \ \forall r'\hspace{-1mm}\in \hspace{-1mm}
\Big[\frac{p'}{1+(1-\gamma)p'}\,,p'\Big]\,
\end{equation*}
which means $L^*v\in L^r(0,\infty;U)$ for each $r\in [p,\frac{p}{1-(1-\gamma)p}]$. 
\\
To prove (iii), we take $p\in [\frac1{1-\gamma},\infty)$, so that $p'\in (1,\frac1{\gamma}]$; 
since again $p'=\frac{r'}{1-(1-\gamma)r'}$ if and only if $r'=\frac{p'}{1+(1-\gamma)p'}$, by 
Proposition~\ref{p:stimeL}(i) we obtain
\begin{equation*}
\Big|\int_0^\infty (L^*v(t),u(t))_U\,dt\Big| 
\le c \|v\|_{L^p(0,\infty;U)}\|u\|_{L^{r'}(0,\infty;U)} 
\quad \forall r'\hspace{-1mm}\in\hspace{-1mm} \Big[\frac{p'}{1+(1-\gamma)p'}\,,p'\Big]\,,
\end{equation*}
which means $L^*v\in L^r(0,\infty;U)$ for each $r\in [p,\frac{p}{1-(1-\gamma)p}]$. 
\\
The assertion (iv) is the dual statement of (i) of Proposition~\ref{p:stimeL}.

\smallskip
Finally, (v) follows again from the assertion (v) of Proposition~\ref{p:stimeL}
by duality.
We take $v\in L^1(0,\infty;[\cD(A^\epsilon)]')$, which by \cite[Remark~A.1(ii)]{abl-2} 
means $v(t) = {A^*}^\epsilon w(t)$, with $w\in L^1(0,\infty;Y)$.
We then compute 
\begin{eqnarray*}
\lefteqn{\int_0^\infty (L^*v(t),u(t))_U\,dt = \int_0^\infty ({A^*}^\epsilon w(t),Lu(t))_Y\,dt =} 
\\
& & = \int_0^\infty (w(t),A^\epsilon Lu(t))_Y \, dt 
\le \|w\|_{L^1(0,\infty;Y)}\, \|Lu\|_{C_b([0,\infty[;\cD(A^\epsilon))} 
\\
& & \le c_r \|{A^*}^{-\epsilon}v\|_{L^1(0,\infty;Y)}\, \|u\|_{L^r(0,\infty;U)}\,, 
\qquad r\in [q',\infty]\,,
\end{eqnarray*}
where in the latter estimate we used (v) of Proposition~\ref{p:stimeL}.
The above shows that $L^*$ maps $L^1(0,\infty;[\cD(A^\epsilon)]')$ continuously 
in the dual space of $L^r(0,\infty;U)$, for any $r\in [q',\infty]$, that is
$L^{r'}(0,\infty;U)$ with $r'\in [1,q]$, thus concluding the proof.
\end{proof}

\begin{remark} \label{r:extend-with-exponents}
\begin{rm}
All the regularity results provided by the statements contained in Proposition~\ref{p:stimeL}
and Proposition~\ref{p:stimeL*} may be easily extended to natural analogs in $L^p_\delta$-spaces, 
if $\delta\in [0,\omega\wedge \eta)$ is given, maintaining the respective summability exponents 
$p$. The proof is omitted.
\end{rm}
\end{remark} 

In the following Proposition we collect a couple of regularity results which---in view
of Remark~\ref{r:extend-with-exponents}---are in essence contained in assertion (v) of 
Proposition~\ref{p:stimeL}.
However, since \eqref{e:both-crucial} below will play a crucial role in 
the proof of well-posedness of the Algebraic Riccati equations, its statement
is given explicitly, along with a distinct proof.

\begin{proposition}\label{p:L*R*R} 
For any $\delta\in [0,\omega\wedge \eta[$, the following regularity results are valid:
\begin{subequations}\label{e:both-crucial}
\begin{align}
& R^*RL \in \cL(L_{\delta}^{q'}(0,\infty;U),L_{\delta}^\infty(0,\infty;\cD({A^*}^{\epsilon})))\,;
\label{e:for-crucial}\\[1mm]
& L^*R^*R \in 
\cL(L_{\delta}^1(0,\infty;[\cD({A^*}^{\epsilon})]'),L_{\delta}^q(0,\infty;U))\,.
\label{e:crucial}
\end{align}
\end{subequations}

\end{proposition}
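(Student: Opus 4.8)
**Proof plan for Proposition~\ref{p:L*R*R}.**

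The plan is to derive the two regularity statements \eqref{e:for-crucial} and \eqref{e:crucial} from Proposition~\ref{p:stimeL}(v) and Proposition~\ref{p:stimeL*}(v) via the smoothing property \eqref{e:keyasR} of $R^*R$, then upgrade to the weighted spaces by invoking Remark~\ref{r:extend-with-exponents}. Concretely, fix $\delta\in[0,\omega\wedge\eta)$. For \eqref{e:for-crucial}, start with $u\in L^{q'}_\delta(0,\infty;U)$; the weighted analogue of Proposition~\ref{p:stimeL}(v) gives $Lu\in C_b([0,\infty);\cD(A^\epsilon))$ with the exponential weight retained, i.e. $e^{\delta\cdot}Lu(\cdot)$ is bounded with values in $\cD(A^\epsilon)$. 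Then one applies ${A^*}^\epsilon R^*R A^{-\epsilon}\in\cL(Y)$ from \eqref{e:keyasR}: writing $R^*R\,Lu(t)={A^*}^{-\epsilon}\big({A^*}^{\epsilon}R^*RA^{-\epsilon}\big)A^{\epsilon}Lu(t)$, one reads off $\|{A^*}^{\epsilon}R^*R\,Lu(t)\|_Y\le c\,\|A^{\epsilon}Lu(t)\|_Y$, which together with the bound on $Lu$ yields $R^*R\,Lu\in L^\infty_\delta(0,\infty;\cD({A^*}^\epsilon))$, as claimed.

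For \eqref{e:crucial}, the plan is a dual/adjoint argument. Take $v\in L^1_\delta(0,\infty;[\cD({A^*}^\epsilon)]')$; by \cite[Remark~A.1(ii)]{abl-2} write $v(t)={A^*}^\epsilon w(t)$ with $w\in L^1_\delta(0,\infty;Y)$. Then $R^*R\,v(t)={A^*}^\epsilon\big({A^*}^{\epsilon}R^*RA^{-\epsilon}\big)\cdots$—more precisely one rewrites $R^*R\,{A^*}^\epsilon w(t) = {A^*}^\epsilon\big({A^{\epsilon}}R^*R{A^*}^{-\epsilon}\big)^{*}w(t)$ using that the adjoint of ${A^*}^{\epsilon}R^*RA^{-\epsilon}$ is ${A^{\epsilon}}R^*R{A^*}^{-\epsilon}$ (both in $\cL(Y)$ by \eqref{e:keyasR} and self-adjointness of $R^*R$), so $R^*R\,v={A^*}^\epsilon\tilde w$ with $\tilde w\in L^1_\delta(0,\infty;Y)$ and $\|\tilde w\|_{L^1_\delta}\le c\,\|{A^*}^{-\epsilon}v\|_{L^1_\delta}=c\,\|w\|_{L^1_\delta}$. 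In other words $R^*R\in\cL(L^1_\delta(0,\infty;[\cD({A^*}^\epsilon)]'),L^1_\delta(0,\infty;[\cD({A^*}^\epsilon)]'))$, and now composing with $L^*$ one needs the weighted analogue of Proposition~\ref{p:stimeL*}(v), namely $L^*\in\cL(L^1_\delta(0,\infty;[\cD(A^\epsilon)]'),L^q_\delta(0,\infty;U))$. Chaining these gives $L^*R^*R\in\cL(L^1_\delta(0,\infty;[\cD({A^*}^\epsilon)]'),L^q_\delta(0,\infty;U))$. Alternatively, and perhaps more cleanly, \eqref{e:crucial} can be obtained directly as the adjoint statement of \eqref{e:for-crucial}: pairing $\int_0^\infty (L^*R^*R v(t),u(t))_U\,dt=\int_0^\infty \langle v(t),R^*RLu(t)\rangle_{\cD({A^*}^\epsilon)}\,dt$ and bounding by $\|v\|_{L^1_\delta([\cD({A^*}^\epsilon)]')}\,\|R^*RLu\|_{L^\infty_{-\delta}(\cD({A^*}^\epsilon))}$, with the weight bookkeeping arranged so that $L^{q'}_\delta$ is dual to $L^{q}$ with the conjugate weight—one should be careful here that the duality $L^{q'}_\delta$ vs.\ $L^q_{-\delta}$ is the one that makes the pairing finite, and that the $C_b$-with-weight bound on $R^*RLu$ survives.

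The routine parts—Hölder with exponents $q,q'$, the geometric-series summation hidden inside Remark~\ref{r:extend-with-exponents}, and the elementary manipulations with ${A^*}^{\pm\epsilon}$—I will not grind through. The main obstacle I anticipate is purely the \emph{weight bookkeeping}: one must verify that all the constants produced by the weighted versions of Propositions~\ref{p:stimeL} and \ref{p:stimeL*} are finite for the given $\delta<\omega\wedge\eta$ (this is where $\delta<\omega$ enters, exactly as in \eqref{e:convergent-series}), and that in the duality pairing the exponential weights on the two factors are genuinely conjugate (one carries $e^{\delta t}$, the other must carry $e^{-\delta t}$), so that no spurious growth is introduced. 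A secondary technical point is making sure the identification $R^*R v = {A^*}^\epsilon\tilde w$ is legitimate as an equality in $[\cD({A^*}^\epsilon)]'$ and that the implied constant is exactly $\|{A^{\epsilon}}R^*R{A^*}^{-\epsilon}\|_{\cL(Y)}$, which in turn equals $\|{A^*}^{\epsilon}R^*RA^{-\epsilon}\|_{\cL(Y)}\le c$ by taking adjoints; this is where the self-adjointness of $R^*R$ and the density of $\cD({A^*}^{1+\epsilon})$ must be used to justify the formal computation.
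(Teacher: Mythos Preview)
Your plan is correct and matches the paper's proof: the paper establishes \eqref{e:for-crucial} by exactly the duality computation underlying (the weighted version of) Proposition~\ref{p:stimeL}(v) combined with \eqref{e:keyasR}, and then obtains \eqref{e:crucial} by duality from \eqref{e:for-crucial}, which is your ``alternatively'' route. One small correction to your first (direct) approach to \eqref{e:crucial}: the adjoint of \eqref{e:keyasR} gives $R^*R\in\cL\big([\cD({A^*}^\epsilon)]',[\cD(A^\epsilon)]'\big)$, i.e.\ $R^*Rv=A^\epsilon\tilde w$ (not ${A^*}^\epsilon\tilde w$), which is precisely the input space required by Proposition~\ref{p:stimeL*}(v); with that fix the chain closes.
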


\begin{proof}
Let $u\in L_{\delta}^{q'}(0,\infty;U)$ and $z\in \cD(A^{\epsilon})$.
We have 
\beyy 
\lefteqn{
\hspace{-5mm}|(R^*RL u(t),A^{\epsilon}z)_Y| = 
\Big| \int_0^t (u(\sigma),B^*e^{A^*(t-\sigma)}R^*RA^{\epsilon}z)_U\, d\sigma\Big| \le} 
\\
& & \le \int_0^t \Big|\big(e^{-\delta(t-\sigma)}u(\sigma),e^{\delta(t-\sigma)}B^*e^{A^*(t-\sigma)}R^*RA^{\epsilon}z\big)_U \Big|\,dt \le 
\\
& & \le C\, e^{-\delta t}\,\|e^{\delta\cdot}u\|_{L^{q'}(0,\infty;U)}\,
\|e^{\delta \cdot}B^*e^{A^*\cdot}R^*RA^\epsilon z\|_{L^q(0,\infty;U)}\le 
\\
& & \le C\, e^{-\delta t}\, 
\|e^{\delta\cdot}u\|_{L^{q'}(0,\infty;U)}\,\|z\|_Y\,,
\eeyy
where $\delta \in ]0,\omega\wedge \eta[$ and in the last estimate we used 
Proposition~\ref{p:expBA}.
Therefore,
\begin{equation*}
|(e^{\delta t}R^*RL u(t),A^{\epsilon}z)_Y| \le
C\, \|e^{\delta\cdot}u\|_{L^{q'}(0,\infty;U)}\, \|z\|_Y\,,
\end{equation*}
i.e. ${A^*}^{\epsilon}e^{\delta t}R^*RL u \in L^{\infty}(0,\infty;U)$ and
\beq\label{e:stimaR*RL}
\|R^*RL u(\cdot)\|_{L_{\delta}^\infty(0,\infty;\cD({A^*}^{\epsilon}))} 
\le c\, \|u\|_{L_{\delta}^{q'}(0,\infty;U)}\,.
\eeq 

The statement in \eqref{e:crucial} follows from \eqref{e:for-crucial} by duality.

\end{proof}

\section{The optimal state semigroup, the optimal cost operator}
\label{s:gain}

\subsection{The optimal pair 
}
Since we are dealing with a classical linear-quadratic problem, the existence of a unique optimal control minimizing the cost functional \eqref{e:funct} follows by convex optimization arguments.
In addition, the Lagrange multipliers method yields the optimality condition for the optimal pair 
$(\hat{y},\hat{u})\in L^2(0,\infty;Y)\times L^2(0,\infty;U)$, that is
\beq\label{e:optimality-cond} 
\hat{u}= -L^*R^*R\hat{y}\,, 
\eeq
where 
\beq \label{e:optimal-state} 
\hat{y}=e^{A\cdot}y_0 + L\hat{u}\,.
\eeq
Then, owing to the regularity provided by Propositions~\ref{p:stimeL} and \ref{p:stimeL*}, as well as to the decay assumption on the semigroup $e^{At}$, we can appeal to a classical bootstrap process: 
we set
\begin{equation*}
p_0=2, \qquad p_{n+1}=\frac{p_n}{1-(1-\gamma)p_n}, \quad 0\le n <N\,,
\end{equation*}
where $N\in \N$ is the first positive integer such that $p_N > \frac{1}{1-\gamma}$; 
such an integer does exist because
\begin{equation*}
p_{n+1}-p_n = p_n \, \frac{(1-\gamma)p_n}{1-(1-\gamma)p_n} >
\frac{4(1-\gamma)}{2\gamma-1}>0\,.
\end{equation*}
Since $\hat{u}\in L^{p_0}(0,\infty;U)$, by \eqref{e:optimal-state} 
we know $\hat{y}\in L^{p_1}(0,\infty;Y)$, so that we deduce as a first step
$\hat{u}(\cdot,s;x)\in L^r(0,\infty;U)$ for all $r<p_2$ as well as, by 
\eqref{e:optimality-cond}, $\hat{y}\in L^s(0,\infty;Y)$ for all $s<p_3$. 
Thus, after $n$ steps, we get 
\begin{equation*}
\hat{u}\in L^r(0,\infty;U) \quad \forall r<p_{2n}\,, 
\qquad \hat{y}\in 
L^s(0,\infty;Y) \quad \forall s<p_{2n+1}\,.
\end{equation*}

This procedure stops as soon as $2n$ or $2n+1$ equals $N$: indeed, if $N=2n$ we get at the $n$-th step 
$\hat{u} \in L^r(s,T:U)$ for all $r<p_N$ and $\hat{y}\in C_b([0,\infty[;Y)$, so that in the next step we obtain 
$\hat{u}\in L^p(0,\infty;U)$ for all $p<\infty$; if $N=2n+1$ we
find directly, at the $(n+1)$-th step, $\hat{u}\in L^p(0,\infty;U)$ for all $p<\infty$ and 
$\hat{y}\in C_b([0,\infty[;Y)$. 
At any step we also find the corresponding bound in terms of $\|x\|_Y\,$. 
Thus, we have proved the following Proposition.

\begin{proposition}[Statement~S1.~of Theorem~\ref{t:main}] \label{p:reg-uy} 
For any $x\in Y$ there exists a unique optimal control $\hat{u}(\cdot)$ for 
problem \eqref{e:state-eq}-\eqref{e:funct}.
The optimal pair $(\hat{y},\hat{u})$, which a priori belongs to 
$L^2(0,\infty;Y)\times L^2(0,\infty;U)$, further satisfies
\begin{equation*}
\hat{y}\in L^p(0,\infty;Y)\cap C_b([0,\infty);Y)\,, \quad \hat{u}\in L^p(0,\infty;Y)
\qquad \forall p\in [2,\infty)\,,
\end{equation*}
continuously with respect to $x\in Y$:
\begin{subequations}
\begin{align} 
& \|\hat{y}\|_{L^p(0,\infty;U)}\le c_p \|x\|_Y \quad \forall p \in [2,\infty)\,;  \qquad 
 \|\hat{y}\|_{C_b([0,\infty[;Y)} \le c\|x\|_Y\,,
\label{e:continuously-state} \\[2mm]
& \|\hat{u}\|_{L^p(0,\infty;U)}\le c_p \|x\|_Y \quad \forall p \in [2,\infty)\,.
\label{e:continuously-control}
\end{align}
\end{subequations}
\end{proposition}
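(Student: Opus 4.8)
\textbf{Proof proposal for Proposition~\ref{p:reg-uy}.}
The plan is to combine the optimality system \eqref{e:optimality-cond}--\eqref{e:optimal-state} with the continuity properties of $L$ and $L^*$ from Propositions~\ref{p:stimeL} and \ref{p:stimeL*}, together with the exponential stability \eqref{e:stability}, and then run the bootstrap already sketched in the text. First I would record existence and uniqueness: the functional $J$ in \eqref{e:funct} is strictly convex, continuous and coercive on $L^2(0,\infty;U)$ (it makes sense for $u\equiv 0$ by the Remark following Assumption~\ref{h:ip1}, and $u\mapsto \|u\|_{L^2}^2$ already dominates it from below), hence there is a unique minimizer $\hat u\in L^2(0,\infty;U)$; the associated state $\hat y$ lies a priori in $L^2(0,\infty;Y)$ because $e^{A\cdot}y_0\in L^2$ by \eqref{e:stability} and $L\in\cL(L^2(0,\infty;U),L^2(0,\infty;Y))$ by Proposition~\ref{p:stimeL}(ii) (with $p=r=2$, which is admissible since $\gamma<1$ forces $2<1/(1-\gamma)$ precisely when $\gamma<1/2$; more generally one uses $L\in\cL(L^2,L^2)$, the elementary part of Proposition~\ref{p:stimeL1} together with Proposition~\ref{p:stimeL2}(ii)). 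The Euler--Lagrange condition then reads $\hat u=-L^*R^*R\hat y$, which is \eqref{e:optimality-cond}, and \eqref{e:optimal-state} is just the variation-of-constants formula.

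Next I would set up the iteration on summability exponents exactly as displayed: $p_0=2$ and $p_{n+1}=p_n/(1-(1-\gamma)p_n)$, valid as long as $(1-\gamma)p_n<1$, and observe $p_{n+1}-p_n=p_n\,(1-\gamma)p_n/(1-(1-\gamma)p_n)$ is bounded below by a positive constant, so after finitely many steps $N$ one reaches $p_N>1/(1-\gamma)$. The engine of the bootstrap is the pair of mapping properties: on the state side $L\in\cL(L^{p_n}(0,\infty;U),L^{p_{n+1}}(0,\infty;Y))$ from Proposition~\ref{p:stimeL}(ii), so that $\hat u\in L^{p_n}$ together with $e^{A\cdot}y_0\in L^{p_{n+1}}$ (again by \eqref{e:stability}) gives $\hat y\in L^{p_{n+1}}(0,\infty;Y)$; on the control side $L^*\in\cL(L^{p}(0,\infty;Y),L^{r}(0,\infty;U))$ with the matching gain exponents from Proposition~\ref{p:stimeL*}(ii), so that $R^*R\in\cL(Y)$ and $\hat y\in L^{p_{n+1}}(0,\infty;Y)$ feed back into $\hat u\in L^{r}(0,\infty;U)$ for all $r<p_{n+2}$. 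Iterating, after $n$ steps $\hat u\in L^r$ for all $r<p_{2n}$ and $\hat y\in L^s$ for all $s<p_{2n+1}$; once an index reaches $N$ one uses the endpoint statements Proposition~\ref{p:stimeL}(iii)--(iv) and Proposition~\ref{p:stimeL*}(iii) to conclude $\hat y\in C_b([0,\infty);Y)$ and $\hat u\in L^p(0,\infty;U)$ for every $p<\infty$. The boundedness estimates \eqref{e:continuously-state}--\eqref{e:continuously-control} with constants $c_p$, $c$ are produced automatically at each stage, since every map invoked is bounded and the initial data enter only through $\|e^{A\cdot}y_0\|_{L^p}\le c_p\|y_0\|_Y$.

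I do not expect a serious obstacle here: the Proposition is essentially a corollary of the regularity machinery of Section~\ref{s:two}, and the text already carries out the bootstrap verbatim. The one point requiring a little care is the \emph{base case} of the iteration --- making sure that $p_0=2$ is an admissible exponent for both Proposition~\ref{p:stimeL}(ii) and Proposition~\ref{p:stimeL*}(ii), i.e.\ that $2<1/(1-\gamma)$, which holds exactly when $\gamma>1/2$; the lower bound $p_{n+1}-p_n>4(1-\gamma)/(2\gamma-1)$ quoted in the text indeed presupposes $\gamma>1/2$. When $\gamma\le 1/2$ one instead starts the bootstrap directly from the elementary $L^p\to L^p$ continuity of $L$ and $L^*$ (valid for all $p\in[1,\infty]$, cf.\ Proposition~\ref{p:stimeL1} and Proposition~\ref{p:stimeL2}(ii)) combined with the exponential weight room afforded by \eqref{e:stability} and Remark~\ref{r:extend-with-exponents}: picking $\delta\in(0,\omega\wedge\eta)$ one gains an arbitrarily small amount of integrability-at-infinity for free, which suffices to push $p$ up past any finite threshold. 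Either way the argument terminates after finitely many steps and yields all the asserted regularity together with the continuous dependence on $x\in Y$.
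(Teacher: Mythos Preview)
Your proposal is correct and follows essentially the same bootstrap argument as the paper, which uses the optimality condition \eqref{e:optimality-cond}--\eqref{e:optimal-state} together with the smoothing of $L$ and $L^*$ from Propositions~\ref{p:stimeL}--\ref{p:stimeL*} to climb from $p_0=2$ past the threshold $1/(1-\gamma)$. Your caveat about the base case is well taken---the paper's lower bound $p_{n+1}-p_n>4(1-\gamma)/(2\gamma-1)$ indeed presupposes $\gamma>1/2$---but your weighted-space fix is more than needed: when $\gamma\le 1/2$ one already has $p_0=2\ge 1/(1-\gamma)$, so Proposition~\ref{p:stimeL}(iii)--(iv) and Proposition~\ref{p:stimeL*}(iii) apply immediately and no iteration is required.
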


\smallskip
Let us denote by $\Phi(\cdot)x$ the optimal state corresponding to an initial state $x\in Y$
and to the optimal control $\hat{u}(\cdot)$:
\begin{equation*} 
\Phi(t)x:=\hat{y}(t)=y(t,x;\hat{u})\,.
\end{equation*}
The significant basic properties of the family of operators $\{\Phi(t)\}_{t\ge 0}$ are 
briefly recorded below.

\smallskip
\noindent
1. The estimate on the right in \eqref{e:continuously-state} of Proposition~\ref{p:reg-uy} yields
$\|\Phi(t)\|_{\cL(Y)}<\infty$ uniformly in $t\ge 0$. 

\smallskip
\noindent
2. It is not difficult the show that $\hat{y}(t,x)$ possesses the transition property 
\begin{equation} \label{e:transition}
\hat{y}(t+\sigma;x)=\hat{y}(\sigma;\hat{y}(t,x))\in C_b([0,\infty);Y)
\qquad x\in Y\,,\; t,\sigma> 0\,,
\end{equation}
which renders $\Phi(t)$ a strongly continuous semigroup on $Y$; indeed, 
$\Phi(0)=I$ is readily checked, as $\hat{y}(0,x)=x$ for any $x\in Y$.
We just note that the proof of \eqref{e:transition} follows a standard route: 
it is based on the identity 
\begin{equation}\label{e:introduces-Lambda}
\hat{y}(t;x) + [LL^*R^*R\hat{y}(\cdot,x)](t)= e^{At}x\,, 
\end{equation}
which follows from $L\hat{u}= -LL^*R^*R\hat{y}$ (this, in turn, is a direct consequence 
of the optimality condition \eqref{e:optimality-cond}), combined with \eqref{e:optimal-state}.
(See, e.g., \cite[Theorem~6.25.1, p.~626]{las-trig-encyclopedia} for an outline of the necessary 
steps, though in the context of min-max game theory.)

Thus, \eqref{e:transition} confirms that $\Phi(t)$ is a $C_0$-semigroup on $Y$,
which introduces its infinitesimal generator, that is the linear, closed operator 
$A_P:\cD(A_P)\subset Y\to Y$ defined by
\begin{equation}
\begin{aligned}
\cD(A_P) &:= \big\{x\in Y: \:\lim_{t\to 0^+ } \frac{\Phi(t)x-x}{t} \quad\textrm{exists in}\; Y\big\}\,,
\\
A_Px &:=\lim_{t\to 0^+ } \frac{\Phi(t)x-x}{t}\,,
\qquad\qquad  x\in \cD(A_P)\,.
\label{e:optimal-state-generator}
\end{aligned}
\end{equation}
The domain $\cD(A_P)$ is by its nature dense.

\smallskip
\noindent
3. Since $\Phi(\cdot)x\in L^2(0,\infty;Y)$ and $\Phi(t)$ is a $C_0$-semigroup, 
then according to Datko's Theorem $\Phi(t)=e^{A_Pt}$ is further exponentially stable,
which confirms \eqref{e:exponential_1} .

\smallskip
In summary, we have the following 
\begin{proposition}[Statement~S2.~of Theorem~\ref{t:main}] \label{p:optimal-state-semigroup}
The optimal state $\hat{y}(t,x)$ defines a strongly continuos semigroup $e^{A_Pt}$ in $Y$, 
$t\ge 0$, which is exponentially stable.
\end{proposition}
 
A deeper insight into the additional differential properties of the optimal state semigroup 
$\Phi(t)=e^{A_Pt}$---the ones which will ultimately allow us to establish well-posedness of the 
ARE---is given in Section~\ref{s:are}. 

\subsection{The Riccati operator $P$}

The Riccati operator $P$ is initially introduced, as usual, explicitly in terms of the optimal state semigroup $\Phi(t)$:
\begin{equation}\label{e:definition-of-riccatiop}
Px :=\int_0^\infty e^{A^*t}R^*R\Phi(t)x\,dt\,, \qquad x\in Y\,.
\end{equation}

We record first the basic properties of the operator $P$.

\begin{proposition} 
The operator $P$ belongs to $\cL(Y)\cap \cL(\cD(A^\epsilon),\cD({A^*}^\epsilon))$.
\end{proposition}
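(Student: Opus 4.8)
The statement asserts two mapping properties of the operator $P$ defined by \eqref{e:definition-of-riccatiop}: first, $P\in\cL(Y)$, and second, $P\in\cL(\cD(A^\epsilon),\cD({A^*}^\epsilon))$. The plan is to read off both directly from the integral formula, using (a) the exponential stability of $e^{At}$ (Assumption~\ref{h:ip1}) together with $R\in\cL(Y,Z)$, and (b) the exponential stability of the optimal state semigroup $\Phi(t)$ (Proposition~\ref{p:optimal-state-semigroup}, i.e.\ \eqref{e:exponential_1}), plus the key observation-operator smoothing condition \eqref{e:keyasR}.

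\textbf{Boundedness on $Y$.} First I would fix $x\in Y$ and estimate the integrand of \eqref{e:definition-of-riccatiop} in the $Y$-norm:
\[
\|e^{A^*t}R^*R\Phi(t)x\|_Y \le M e^{-\omega t}\,\|R^*R\|_{\cL(Y)}\,\|\Phi(t)x\|_Y \le M M_1 \|R\|^2 e^{-(\omega+\omega_1)t}\|x\|_Y\,,
\]
using \eqref{e:stability} for $\|e^{A^*t}\|_{\cL(Y)}=\|e^{At}\|_{\cL(Y)}\le M e^{-\omega t}$ and \eqref{e:exponential_1} for $\|\Phi(t)x\|_Y$. Since the right-hand side is integrable on $(0,\infty)$, the Bochner integral defining $Px$ converges absolutely and $\|Px\|_Y\le \frac{MM_1\|R\|^2}{\omega+\omega_1}\|x\|_Y$, giving $P\in\cL(Y)$. (Alternatively, one can invoke $\Phi(\cdot)x\in L^2(0,\infty;Y)$ and $R^*Re^{A^*\cdot}z\in L^2(0,\infty;Y)$ from \eqref{e:continuously-state} and Assumption~\ref{h:ip1}, and pair via Cauchy--Schwarz — this is essentially the content of Statement~S3.\ on the cost operator.)

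\textbf{The mapping $\cD(A^\epsilon)\to\cD({A^*}^\epsilon)$.} Here I would show that $Px\in\cD({A^*}^\epsilon)$ with controlled norm whenever $x\in\cD(A^\epsilon)$. Formally apply ${A^*}^\epsilon$ under the integral sign:
\[
{A^*}^\epsilon Px = \int_0^\infty {A^*}^\epsilon e^{A^*t}R^*R\,\Phi(t)x\,dt\,,
\]
and split the factor ${A^*}^\epsilon e^{A^*t}R^*R$ as $\big({A^*}^\epsilon e^{A^*t}\big)\big({A^*}^{-\epsilon}\big)\cdot\big({A^*}^\epsilon R^*R A^{-\epsilon}\big)\cdot A^\epsilon$, wait — more cleanly, write ${A^*}^\epsilon e^{A^*t}R^*R = e^{A^*t}\,{A^*}^\epsilon R^*R A^{-\epsilon}\,A^\epsilon$ (since ${A^*}^\epsilon$ commutes with $e^{A^*t}$). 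Then by \eqref{e:keyasR} the middle factor is bounded on $Y$ by $c$, and we estimate
\[
\|{A^*}^\epsilon e^{A^*t}R^*R\Phi(t)x\|_Y \le M e^{-\omega t}\, c\, \|A^\epsilon\Phi(t)x\|_Y\,.
\]
The remaining point is integrability of $t\mapsto e^{-\omega t}\|A^\epsilon\Phi(t)x\|_Y$ for $x\in\cD(A^\epsilon)$; this follows from Statement~S8.\ of Theorem~\ref{t:main} ($\hat y=\Phi(\cdot)x\in C_b([0,\infty);\cD(A^\epsilon))$ with norm controlled by $\|x\|_{\cD(A^\epsilon)}$ — hence $\|A^\epsilon\Phi(t)x\|_Y \le c\|A^\epsilon x\|_Y$ uniformly in $t$), so the integrand decays like $e^{-\omega t}$ and integrates to a bound $\frac{Mc\cdot c}{\omega}\|A^\epsilon x\|_Y$. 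By the closedness of ${A^*}^\epsilon$ this justifies $Px\in\cD({A^*}^\epsilon)$ and ${A^*}^\epsilon Px = \int_0^\infty {A^*}^\epsilon e^{A^*t}R^*R\Phi(t)x\,dt$, giving $P\in\cL(\cD(A^\epsilon),\cD({A^*}^\epsilon))$.

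\textbf{Main obstacle.} The delicate point is the differentiation under the integral sign / closedness argument in the second part: one must know that $\int_0^\infty{A^*}^\epsilon e^{A^*t}R^*R\Phi(t)x\,dt$ converges in $Y$ and equals ${A^*}^\epsilon$ applied to $Px$. The convergence is handled by the estimate above once \eqref{e:keyasR} and the $\cD(A^\epsilon)$-regularity of $\Phi(\cdot)x$ are in hand; the identification then follows from the closedness of the (fractional power) operator ${A^*}^\epsilon$ together with convergence of the truncated integrals $\int_0^N e^{A^*t}R^*R\Phi(t)x\,dt \to Px$ in $Y$ and $\int_0^N {A^*}^\epsilon e^{A^*t}R^*R\Phi(t)x\,dt$ to its limit in $Y$. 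If at this stage Statement~S8.\ is not yet available, one can instead use only that $\Phi(\cdot)x\in L^2(0,\infty;Y)$ and observe $t\mapsto {A^*}^\epsilon e^{A^*t}R^*R A^{-\epsilon}$ lies in $L^2(0,\infty;\cL(Y))$ by analytic-type bounds on ${A^*}^\epsilon e^{A^*t}$ (not available here — $e^{A^*t}$ is merely $C_0$) — so the route through the uniform-in-$t$ bound $\|A^\epsilon\Phi(t)x\|_Y\le c\|A^\epsilon x\|_Y$, i.e.\ through S8., is the natural one, and that is the step to flag.
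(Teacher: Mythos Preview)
Your argument for $P\in\cL(Y)$ is essentially the paper's. For the second part, your proof is correct but leans on Statement~S8 (Proposition~\ref{p:statement-eight}), whose proof is deferred to Appendix~\ref{a:lonely} and itself requires the nontrivial invertibility of $\Lambda=I+L^*R^*RL$ on weighted $L^\infty$ spaces (Claim~\ref{claim-lambda}, built on Lemma~\ref{l:invertible-on-L2delta}). There is no circularity---the Appendix proof of \eqref{e:goal_1} does not use the present proposition---so your route is valid, and you were right to flag the dependence.

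The paper takes a more direct route that avoids S8 altogether: it splits $\Phi(t)x = e^{At}x + L\hat{u}(t)$ and treats the two pieces separately. For the free term, $\|{A^*}^\epsilon R^*R\,e^{At}x\|_Y \le c\,\|e^{At}A^\epsilon x\|_Y \le cM e^{-\omega t}\|A^\epsilon x\|_Y$ by \eqref{e:keyasR}. For the controlled term, the paper invokes \eqref{e:stimaR*RL} (Proposition~\ref{p:L*R*R}), which gives $\|R^*RL\hat{u}\|_{L^\infty(0,\infty;\cD({A^*}^\epsilon))} \le c\,\|\hat{u}\|_{L^{q'}(0,\infty;U)}$, and then bounds $\|\hat{u}\|_{L^{q'}}$ by $\|x\|_Y$ via \eqref{e:continuously-control}. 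This is lighter machinery: it uses only results already established in Section~\ref{s:two} and Proposition~\ref{p:reg-uy}, with no appeal to properties of the optimal-state semigroup beyond the basic $L^p$ bounds. What S8 buys you is a conceptually clean one-line estimate; what the paper's decomposition buys is logical self-containment at this point in the exposition.
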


\begin{proof}
With $x\in Y$ we just utilize \eqref{e:stability} of Hypothesis~\ref{h:ip1} and 
the continuity property \eqref{e:continuously-state} of Proposition~\ref{p:reg-uy} to find
\beyy
\|Px\|_Y & \le & \int_0^\infty \|e^{A^*t}\|_{\cL(Y)} \|R^*R\|_{\cL(Y)}\|\Phi(t)x\|_Y \,dt \le \\
& \le & \int_0^\infty Mc\,e^{-\omega t}\,dt\, \|x\|_Y \le c'\|x\|_Y\,.
\eeyy

Next, for $x\in \cD(A^\epsilon)$ we write $\Phi(t) = e^{At}x + L\hat{u}(t)$ and employ
the assumption \eqref{e:keyasR} on the observation operator $R$, the regularity provided by 
\eqref{e:stimaR*RL} and \eqref{e:continuously-state} 
to obtain
\beyy
\|Px\|_{\cD({A^*}^\epsilon)} & \le &
\int_0^\infty \|e^{A^*t}\|_{\cL(Y)}\|R^*R[e^{At}x + L\hat{u}(t)]\|_{\cD({A^*}^\epsilon)}\,dt 
\\
& \le & \int_0^\infty M\,e^{-\omega t}
\|{A^*}^\epsilon R^*RA^{-\epsilon}\|_{\cL(Y)}\|e^{At}A^\epsilon x\|_Y\,dt 
\\
& & \quad + \int_0^\infty M\,e^{-\omega t}
\|R^*RL\hat{u}\|_{L^\infty(0,\infty;\cD({A^*}^\epsilon))}\,dt  
\\
& \le & c \,\big[ \|x\|_{\cD(A^\epsilon)}+ \|\hat{u}\|_{L^{q'}(0,\infty;U)}\big] 
\le c\|x\|_{\cD(A^\epsilon)}\,.
\eeyy
\end{proof}

By the very definition of $P$ in \eqref{e:definition-of-riccatiop} it easily
follows---using the optimality condition \eqref{e:optimality-cond}---that the optimal cost is 
a quadratic form on $Y$, that is $(Px,x)$; this motivates the term {\em optimal cost} operator 
used for $P$. 
The proof of this fact is briefly recorded below.

\smallskip
We simply compute, for any $x\in Y$, the cost corresponding to the optimal pair,
that is 
\begin{align*}
J(\hat{u})&=\int_0^\infty \Big\{\big(R \hat{y}(t,x), R \hat{y}(t,x)\big)_Y
+ \big(\hat{u}(t,x),\hat{u}(t,x)\big)_U\Big\}\, dt
\\
&= \int_0^\infty \big(R \hat{y}(t,x),R\hat{y}(t,x)\big)_Y\,dt
- \int_0^\infty \big(L^*R^* R\hat{y}(\cdot\,,x)(t),\hat{u}(t,x)\big)_U\,dt
\\
&= \int_0^\infty \big(R \Phi(t) x,R[e^{At}x+(L\hat{u})(t)]\big)_Y\,\,dt-
\int_0^\infty \big(R\Phi(t)x,R(L\hat{u})(t)\big)_Y\,\,dt
\\
&= \int_0^\infty (R \Phi(t) x,Re^{At}x)_Y \,dt=
\Big(\int_0^\infty e^{A^*t}R^*R \Phi(t) x\,dt,x\Big)_Y\,.
\end{align*}
Owing to \eqref{e:definition-of-riccatiop}, the last equality shows that 
\begin{equation}\label{e:quadratic-form} 
J(\hat{u})\equiv (Px,x)_Y \myspace \forall x\in Y\,.
\end{equation}

\smallskip
\begin{remark}
\begin{rm}
In much the same way one gets $J(\hat{u})\equiv (x,Px)_Y$, which combined with 
\eqref{e:quadratic-form} shows that $P$ is a self-adjoint operator;
moreover, since $J(\hat{u})\ge 0$ for any $x\in Y$, \eqref{e:quadratic-form} establishes
as well that $P$ is non-negative.
\end{rm}
\end{remark}

Similarly, one may show that 
\begin{equation*} 
(Px_1,x_2)_Y =\int_0^\infty \Big\{\big(R \hat{y}(t,x_1),R \hat{y}(t,x_2)\big)_Y
+ \big(\hat{u}(t,x_1),\hat{u}(t,x_2)\big)_U\Big\}\, dt\\, \; x_1,x_2\in Y\,.
\end{equation*}

\medskip
We now introduce the function $p(t,x)$ defined by
\begin{equation}\label{e:definition-of-pfunction}
p(t,x):= \int_t^\infty e^{A^*(\tau-t)}R^*R \hat{y}(\tau,x)\, d\tau
\qquad t>0\,, \; x\in Y\,, 
\end{equation}
which is the unique solution to the Cauchy problem
\begin{equation}
\begin{cases}
p'(t;x)= -A^*p(t)-R^*R \hat{y}(t,x)
\\
\displaystyle\lim_{t\to +\infty} p(t;x)=0\,.
\end{cases}
\end{equation}

A classical and elementary computation shows that if we rewrite $P$ in 
\eqref{e:definition-of-riccatiop} as a time-dependent function, that is
\begin{equation*}
Px :=\int_t^\infty e^{A^*(\tau-t)}R^*R\Phi(\tau-t)x\,dt\,, \qquad x\in Y\,, \; t\ge 0\,,
\end{equation*}
and next apply the above formula when $x$ is replaced by $\Phi(t)x$, namely:
\begin{equation*}
P\Phi(t)x :=\int_t^\infty e^{A^*(\tau-t)}R^*R\Phi(\tau-t)\Phi(t)x\,dt
=\int_t^\infty e^{A^*(\tau-t)}R^*R\Phi(\tau)x\,dt\,, 
\end{equation*}
then we immediately see that
\begin{equation}\label{e:link-riccatiop-functionp}
P\Phi(t)x\equiv p(t;x) \qquad \forall t\ge 0\,, \; x\in Y\,, 
\end{equation} 
which establishes a relation between the Riccati operator $P$ and the function $p$. 

The above formula \eqref{e:link-riccatiop-functionp} is the starting point in order to derive the {\em feedback representation} of the optimal control---a property which is central to solvability of Problem~\ref{p:problem-0}.
Indeed, if we return to the optimality condition \eqref{e:optimality-cond}, and write 
explicitly the integral operator $L^*$, we know that 
\begin{equation} \label{e:true}
\hat{u}(t,x)=-\int_t^\infty B^*e^{A^*(\tau-t)}R^*R\Phi(\tau)x\,d\tau \qquad\forall x\in Y\,.
\end{equation}
Thus, a {\em formal} computation yields    
\begin{equation} 
\hat{u}(t,x)=-B^*\int_t^\infty e^{A^*(\tau-t)}R^*R\Phi(\tau)x\,d\tau=-B^*p(t;x)\,, 
\label{e:to-be-justified}
\end{equation} 
which combined with \eqref{e:link-riccatiop-functionp} would imply
\begin{equation*}
\hat{u}(t,x) = - B^*P\Phi(t)x \qquad \textrm{for any $x\in Y$ and for a.e. $t>0$.}
\end{equation*}
However, going from \eqref{e:true} to \eqref{e:to-be-justified} necessitates a  
deeper technical justification, which is found in the next section.  

\subsection{The gain operator $B^*P$}
The technical issue raised in the previous section---namely, the need for a rigorous 
justification of the first equality in \eqref{e:to-be-justified}---as well as our next 
and major task (that is to show that the optimal cost operator $P$ does satisfy the algebraic Riccati equation  
corresponding to the optimal control Problem~\ref{p:problem-0}), 
require that we are able to give a proper meaning to the gain operator $B^*P$. 
We accomplish this goal by introducing a linear operator, which will eventually coincide 
with $B^*P$, that is shown to be bounded on a dense subset of $Y$ (and yet unbounded on $Y$).
The regularity result for the operator $R^*RL$ set forth in Proposition~\ref{p:L*R*R}, along 
with Proposition~\ref{p:expGf} provide the tools.
\begin{theorem}[Statement S4.~of Theorem~\ref{t:main}] \label{t:bounded-gain} 
Let $\epsilon$ be such that Hypothesis~\ref{h:ip2}(iii) holds true.
Then, the following statements are valid.
\begin{enumerate}
\item[(i)] 
The integral 
\begin{equation} \label{e:def-T}
\cT x :=\int_0^\infty B^*e^{A^*t}R^*R\Phi(t)x\,dt
\end{equation}
defines a (linear) bounded operator from $\cD(A^\epsilon)$ into $U$. 
\item[(ii)]
The operator $B^*P$, formally defined by
\begin{equation*}
B^*Px :=B^*\int_0^\infty e^{A^*t}R^*R\Phi(t)x\,dt\,,
\end{equation*}
coincides with $\cT$ on $\cD(A^\epsilon)$, and hence
\begin{equation} \label{e:b*p-bdd}
B^*P \in \cL(\cD(A^\epsilon),U)\,.
\end{equation}
\end{enumerate}
\end{theorem}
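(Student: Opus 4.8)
The plan is to prove parts (i) and (ii) in tandem, since the meaning of the ``formal'' integral defining $B^*P$ is precisely what needs to be established. The central idea is to split the semigroup $\Phi(t)$ appearing in \eqref{e:def-T} using the representation $\Phi(t)x = e^{At}x + (L\hat u)(t)$ of \eqref{e:optimal-state}, and then to exploit the decomposition $B^*e^{A^*t} = F(t) + G(t)$ of Hypothesis~\ref{h:ip2} together with the regularity gains recorded in Section~\ref{s:two}. Thus I would write, for $x\in\cD(A^\epsilon)$,
\begin{equation*}
\int_0^\infty B^*e^{A^*t}R^*R\Phi(t)x\,dt
= \int_0^\infty B^*e^{A^*t}R^*R e^{At}x\,dt
+ \int_0^\infty B^*e^{A^*t}R^*R(L\hat u)(t)\,dt\,,
\end{equation*}
and estimate the two terms separately.

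\textbf{First term.} Here I would use the self-adjoint commutation built into Hypothesis~\ref{h:ip2}(iii)(b): since $R^*R\in\cL(\cD(A^\epsilon),\cD({A^*}^\epsilon))$, we have $R^*Re^{At}x = {A^*}^{-\epsilon}\,\big[{A^*}^{\epsilon}R^*RA^{-\epsilon}\big]\,A^\epsilon e^{At}x$, so that $R^*Re^{At}x\in\cD({A^*}^\epsilon)$ with norm controlled by $M e^{-\omega t}\|A^\epsilon x\|_Y$ up to the constant $c$ of \eqref{e:keyasR}. Then I would pair $B^*e^{A^*t}$ with this element of $\cD({A^*}^\epsilon)$ by writing $B^*e^{A^*t}R^*Re^{At}x = \big(B^*e^{A^*t}{A^*}^\epsilon\big)\big({A^*}^{-\epsilon}R^*Re^{At}x\big)$, and invoke Proposition~\ref{p:expBA}: the map $t\mapsto B^*e^{A^*t}{A^*}^\epsilon$ (suitably extended) lies in $\cL(Y,L^q(0,\infty;U))$, while $t\mapsto {A^*}^{-\epsilon}R^*Re^{At}x$ lies in $L^{q'}(0,\infty;Y)$ thanks to exponential stability. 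A H\"older argument in time then bounds $\int_0^\infty\|B^*e^{A^*t}R^*Re^{At}x\|_U\,dt$ by $c\,\|x\|_{\cD(A^\epsilon)}$.

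\textbf{Second term.} For this piece I would again split $B^* e^{A^* t} = F(t) + G(t)$. For the $F$-part, the singular estimate \eqref{e:keyasF} gives $\|F(t)R^*R(L\hat u)(t)\|_U \le N t^{-\gamma}e^{-\eta t}\|R^*R\|_{\cL(Y)}\|(L\hat u)(t)\|_Y$, and since by Proposition~\ref{p:stimeL}(iv)–(v) (and the bootstrap of Proposition~\ref{p:reg-uy}, giving $\hat u\in L^p$ for all finite $p$) the function $L\hat u$ is bounded (indeed in $C_b([0,\infty);Y)$), the integral converges and is bounded by $c\|x\|_Y$; note $t^{-\gamma}$ is integrable near $0$ because $\gamma\in(0,1)$. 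For the $G$-part, the key is that $R^*R(L\hat u)(t)$ lies in $\cD({A^*}^\epsilon)$: this is exactly the content of Proposition~\ref{p:L*R*R}, \eqref{e:for-crucial}, which gives $R^*RL\in\cL(L^{q'}_\delta(0,\infty;U),L^\infty_\delta(0,\infty;\cD({A^*}^\epsilon)))$, hence $R^*R(L\hat u)\in L^\infty(0,\infty;\cD({A^*}^\epsilon))$ with norm $\le c\|\hat u\|_{L^{q'}}\le c\|x\|_Y$. Then Proposition~\ref{p:expGf} applies with $f = R^*R(L\hat u)\in L^\infty(0,\infty;\cD({A^*}^\epsilon))$ (the case $r=\infty$, for which the constraint \eqref{e:constraint-r} is vacuous) and any $p\in[1,1/\gamma)$, in particular $p=1$, yielding $\int_0^\infty\|G(t)R^*R(L\hat u)(t)\|_U\,dt\le c\|\hat u\|_{L^{q'}}\le c\|x\|_Y$.

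\textbf{Conclusion and part (ii).} Combining the bounds shows the integral \eqref{e:def-T} converges in $U$ and $\|\cT x\|_U\le c\|x\|_{\cD(A^\epsilon)}$, which is (i). For (ii), the point is merely to justify interchanging $B^*$ with the (now absolutely convergent) integral: one approximates $x\in\cD(A^\epsilon)$, notes that $P\Phi(t)x = p(t;x)\in\cD(A^*)$ for a.e.\ $t$ (or works with $A^{-1}B$ and the extrapolation duality), and uses that $B^*$ is closed from $\cD(A^*)\subset Y$ into $U$ to pass the operator inside, obtaining $\cT x = B^*Px$ on $\cD(A^\epsilon)$; hence $B^*P\in\cL(\cD(A^\epsilon),U)$. \textbf{The main obstacle} I anticipate is precisely this last bookkeeping step — making rigorous that the formal expression $B^*\int_0^\infty e^{A^*t}R^*R\Phi(t)x\,dt$ equals the genuinely convergent integral $\int_0^\infty B^*e^{A^*t}R^*R\Phi(t)x\,dt$, i.e.\ that $Px$ lands in the domain of the (unbounded, possibly not densely defined) operator $B^*$ when $x\in\cD(A^\epsilon)$; the estimates above are designed exactly so that the partial integrals $\int_0^T B^* e^{A^* t} R^* R \Phi(t) x\, dt = B^* \int_0^T e^{A^* t} R^* R \Phi(t) x\, dt$ form a Cauchy net in $U$ while their $B^*$-preimages converge in $Y$ to $Px$, so closedness of $B^*$ closes the argument.
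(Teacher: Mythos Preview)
Your overall strategy for part~(i) is correct and close to the paper's, but there are two genuine technical gaps.

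\textbf{Gap in the first term of (i).} Your H\"older argument via Proposition~\ref{p:expBA} does not work as written. That proposition asserts $B^*e^{A^*\cdot}{A^*}^\epsilon\in\cL(Y,L^q(0,\infty;U))$, i.e.\ for each \emph{fixed} $y\in Y$ the map $t\mapsto B^*e^{A^*t}{A^*}^\epsilon y$ lies in $L^q$; it does \emph{not} provide a pointwise operator-norm bound $t\mapsto\|B^*e^{A^*t}{A^*}^\epsilon\|_{\cL(Y,U)}\in L^q$. Hence you cannot pair it via H\"older with the time-dependent argument $z(t)={A^*}^{-\epsilon}R^*Re^{At}x$. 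The paper avoids this by reversing the order of decomposition: it splits $B^*e^{A^*t}=F(t)+G(t)$ \emph{first} and writes
\[
B^*e^{A^*t}R^*R\Phi(t)x=F(t)R^*R\Phi(t)x+G(t)R^*Re^{At}x+G(t)R^*RL\hat u(t)\,.
\]
The $F$-term is handled by the pointwise singular estimate~\eqref{e:keyasF} together with $\Phi(\cdot)x\in C_b$; the two $G$-terms are handled exactly as you do your second term, via Proposition~\ref{p:expGf} with $r=\infty$ applied to $f=R^*Re^{A\cdot}x$ and $f=R^*RL\hat u$ respectively (both in $L^\infty(0,\infty;\cD({A^*}^\epsilon))$). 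Your second-term analysis is fine and matches the paper; you just need to apply the same $F+G$ split to the first term.

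\textbf{Gap in (ii).} Your closedness argument does not go through: one does not know that $Px\in\cD(A^*)$ for $x\in\cD(A^\epsilon)$ (only $Px\in\cD({A^*}^\epsilon)$, from the proposition preceding the theorem), so ``closedness of $B^*$ on $\cD(A^*)$'' cannot place $Px$ in the domain. The paper instead \emph{defines} $B^*Px$ as the weak limit in $U$: for $v\in U$ one computes
\[
(\cT x,v)_U=\int_0^\infty\langle R^*R\Phi(t)x,e^{At}Bv\rangle_{\cD({A^*}^\epsilon)}\,dt
=\lim_{s\to 0^+}(Px,e^{As}Bv)_{\cD({A^*}^\epsilon),[\cD({A^*}^\epsilon)]'}
=\lim_{s\to 0^+}([e^{As}B]^*Px,v)_U\,,
\]
using that $[e^{As}B]^*\in\cL(\cD({A^*}^\epsilon),U)$ for each $s>0$. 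This weak limit is then \emph{declared} to be $B^*Px$, and it equals $\cT x$ by construction. Your parenthetical ``work with $A^{-1}B$ and extrapolation duality'' points in the right direction, but the actual mechanism is this weak-limit identification, not a closed-graph argument.
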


\begin{proof}
(i) Let $x\in \cD(A^\epsilon)$. 
According to the decomposition \eqref{e:key-hypo} for $B^*e^{A^*t}$, the integrand 
in \eqref{e:def-T} is split as follows:
\beq \label{e:split}
B^*e^{A^*t}R^*R\Phi(t)x = F(t)R^*R\Phi(t)x + G(t)R^*Re^{At}x + G(t)R^*RL\hat{u}(t)\,,
\eeq
with a further splitting due to \eqref{e:optimal-state}.
We will show that each summand on the right hand side of \eqref{e:split}
is a function in $L^1(0,\infty;U)$.

First, the basic Hypothesis~\ref{h:ip2}(i) combined with the regularity 
\eqref{e:continuously-state} of the optimal state 
readily imply 
\begin{equation*}
\|F(t)R^*R\Phi(t)x\|_U \le N\,t^{-\gamma}\,e^{-\eta t}\|R^*R\|_{\cL(Y)} \|\Phi(t)x\|_Y
\le c\,t^{-\gamma}\,e^{-\eta t}\,\|x\|_Y\,,
\end{equation*}
which shows
\beq\label{e:magg1}
\int_0^\infty \|F(t)R^*R\Phi(t)x\|_U \,dt \le c \|x\|_Y \qquad \forall x\in Y\,.
\eeq

Next, we recall Proposition~\ref{p:expGf} and utilize \eqref{p:expGf} with $p=1$, 
$f(t)=R^*Re^{At}x$ and $r=\infty$, where $R^*R$ is subject to Hypothesis~\ref{h:ip2}(iii)(b), 
thus obtaining 
\beyy
\int_0^\infty \|G(t)R^*Re^{At}x\|_U\,dt 
& \le & c \,\sup_{t>0} \|R^*Re^{At}x\|_{\cD({A^*}^\epsilon)} \le 
\\
& \le & C\,\|{A^*}^\epsilon R^*R{A^*}^{-\epsilon}\|_{\cL(Y)}
\|x\|_{\cD(A^\epsilon)}\,,
\eeyy
so that
\beq\label{e:magg2}
\int_0^\infty \|G(t)R^*Re^{At}x\|_U\,dt 
\le c\|x\|_{\cD(A^\epsilon)} \qquad \forall x\in \cD(A^\epsilon)\,.
\eeq

Finally, for the third summand in \eqref{e:split} we apply once more 
Proposition~\ref{p:expGf}, this time with $f(t) =R^*RL\hat{u}(t)$.
Notice that the membership $R^*RL\hat{u}(\cdot)\in L^\infty(0,\infty;\cD({A^*}^\epsilon))$ follows from \eqref{e:for-crucial}, in view of the regularity established 
in Proposition~\ref{p:reg-uy}, which provide as well the appropriate estimate.
Therefore,
\begin{equation*}
\int_0^\infty \|G(t)R^*RL\hat{u}(t)\|_U\,dt 
\le c\, \sup_{t>0} \|R^*RL\hat{u}(t)\|_{\cD({A^*}^\epsilon)} 
\le c \|\hat{u}\|_{L^{q'}(0,\infty;U)}\,,
\end{equation*}
which yields, in view of \eqref{e:continuously-control},
\beq\label{e:magg3}
\int_0^\infty \|G(t)R^*RL\hat{u}(t)\|_U\,dt \le c\|x\|_Y \qquad \forall x\in Y\,.
\eeq

The obtained estimates \eqref{e:magg1}, \eqref{e:magg2} and \eqref{e:magg3} 
show that
\begin{equation*}
\|\cT x\|_U = \Big\|\int_0^\infty B^*e^{A^*t}R^*R\Phi(t)x\,dt\Big\|_U 
\le c \,\|x\|_{\cD(A^\epsilon)}\qquad \forall x\in \cD(A^\epsilon)\,,
\end{equation*}
that is $\cT\in \cL(\cD(A^\epsilon),U)$. 
 
\medskip
\noindent
(ii) We return to the decomposition \eqref{e:key-hypo}, where 
by Hypotheses~\ref{h:ip2}(i) and (iii)(a) we know
\begin{equation*}
F(t)\in \cL(Y,U), \qquad G(t)\in \cL(\cD({A^*}^\epsilon),U)\,,
\end{equation*}
respectively.
On the other hand, we also have for $t>0$
\beq\label{e:rappB2}
B^*e^{A^*t}x = [e^{At}B]^*x \qquad \forall x\in \cD(A^*)\,,
\eeq
where 
\begin{equation*}
[e^{At}B]^*\in \cL(\cD({A^*}^\epsilon),U)\,;
\end{equation*}
see \cite[Lemma~A.2]{abl-2}.
Thus, the operators $F(t)+G(t)$ and $[e^{At}B]^*$ are well defined 
on $\cD({A^*}^\epsilon)$ and both coincide with $B^*e^{A^*t}$ on $\cD(A^*)$. 
Hence, if $x\in \cD(A^\epsilon)$ we can write for all $v\in U$
\begin{align}
(\cT x,v)_U & = \int_0^\infty ([e^{At}B]^*R^*R\Phi(t)x,v)_U\,dt = 
\nonumber\\
& = \int_0^\infty (R^*R\Phi(t)x,e^{At}Bv)_{\cD({A^*}^\epsilon),[\cD({A^*}^\epsilon)]'}\,dt =
\nonumber\\
& = \lim_{s\to 0^+}\int_0^\infty (e^{A^*s}R^*R\Phi(t)x,e^{At}Bv)_{\cD({A^*}^\epsilon),[\cD({A^*}^\epsilon)]'}\,dt =
\nonumber\\ 
& = \lim_{s\to 0^+}\int_0^\infty (e^{A^*t}R^*R\Phi(t)x,e^{As}Bv)_{\cD({A^*}^\epsilon),[\cD({A^*}^\epsilon)]'}\,dt =
\nonumber\\
& = \lim_{s\to 0^+} (Px,e^{As}Bv)_{\cD({A^*}^\epsilon),[\cD({A^*}^\epsilon)]'}
= \lim_{s\to 0^+} ([e^{As}B]^*Px,v)_U
\label{e:weak-limit-for-calt}\\
& =: (B^*Px,v)_U\,.
\nonumber 
\end{align}
This shows that for all $x\in \cD(A^\epsilon)$ the operator $B^*P$ is uniquely defined as a weak limit in $U$, and coincides with $\cT$, i.e. $B^*P x = \cT x$.
By part (i), $B^*P\equiv \cT \in \cL(\cD(A^\epsilon),U)$ which concludes the
proof of (ii).
\end{proof}

\begin{remark}
\begin{rm}
To give a deeper insight into the previous result, we summarize the major steps
of its proof, complemented by few additional technical remarks.
First, the operator $\cT$ defined by \eqref{e:def-T} is shown to be a bounded operator 
from $\cD(A^\epsilon)$ in $U$.
Next, the equivalence between $(\cT x,v)_U$ and the limit in \eqref{e:weak-limit-for-calt}
(for any $x\in \cD(A^\epsilon)$ and any $v\in U$) shows that $\cT x$ coincides  
with the weak limit $\wlim_{s\to 0^+}[e^{As}B]^*Px$ in $U$.
Notice that the operator $[e^{As}B]^*$ is bounded from $D({A^*}^\epsilon)$ in $U$, 
and yet its norm blows up, as $s\to 0^+$, like $s^{-\gamma}$; see \cite[Lemma~A.2]{abl-2}.

Finally, the motivation for denoting by $B^*Px$ the aforesaid weak limit is the following. 
When $Px$ belongs to $\cD(A^*)$ rather than just to $\cD({A^*}^\epsilon)$, we know that 
$[e^{As}B]^*Px = B^*e^{A^*s}Px \to B^*Px$,
because $e^{A^*s}Px \to Px$ in $\cD(A^*)$ as $s\to 0^+$, whereas by its very definition 
$B^*\in \cL(\cD(A^*),U)$.
We note that the membership $Px\in \cD(A^*)$ does hold, at least for $x\in \cD(A_P)$, 
as we will see later in Lemma~\ref{l:riccati-op}. 
\end{rm}
\end{remark}

\subsection{The feedback representation of the optimal control}
On the basis of the analysis carried out in the previous section, we would intend now 
to derive the feedback representation of the optimal control.
The obtained property \eqref{e:b*p-bdd} for the gain operator suggests that we preliminarly 
assume $x\in\cD(A^\epsilon)$ and show the validity of the feedback formula for these smoother initial data.
In fact, with $x\in\cD(A^\epsilon)$, we can exploit the additional regularity of the 
optimal state, which is made clear in the following Proposition, also of intrinsic interest;
the proof is postponed to Appendix~\ref{a:lonely}. 

\begin{proposition}[Statement~S8.~of Theorem~\ref{t:main}] \label{p:statement-eight}
If $x\in\cD(A^\epsilon)$, the optimal state $\Phi(t)x$ belongs to $\cD(A^\epsilon)$ 
for all $t\ge 0$. More precisely, 
\begin{equation} \label{e:goal_1}
x\in\cD(A^\epsilon) \Longrightarrow 
\Phi(\cdot\,)x\in L^p(0,\infty;\cD(A^\epsilon))\cap C_b([0,\infty);\cD(A^\epsilon))
\end{equation}
for all $p\in [1,\infty]$, continuously with respect to $x$.
Consequently, 
\begin{equation} \label{e:goal_2}
x\in\cD(A^\epsilon) \Longrightarrow \hat{u}\in C_b([0,\infty);U)\,.
\end{equation}
\end{proposition}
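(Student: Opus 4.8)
The plan is to establish \eqref{e:goal_1} first by a bootstrap argument entirely analogous to the one used in Proposition~\ref{p:reg-uy}, but now run in the scale of spaces $\cD(A^\epsilon)$ rather than $Y$, exploiting the smoothing and decay already packaged in Proposition~\ref{p:stimeL}(v) and Proposition~\ref{p:L*R*R}. The starting point is the closed-loop identity \eqref{e:introduces-Lambda}, namely
\begin{equation*}
\Phi(t)x + [LL^*R^*R\,\Phi(\cdot)x](t) = e^{At}x\,,
\end{equation*}
together with the optimality relations \eqref{e:optimality-cond}--\eqref{e:optimal-state}. Since $x\in\cD(A^\epsilon)$ and $e^{At}$ is exponentially stable, the free term $e^{At}x$ lies in $C_b([0,\infty);\cD(A^\epsilon))\cap\bigcap_{p\ge1}L^p(0,\infty;\cD(A^\epsilon))$. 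The idea is then to show that the map $\hat{y}\mapsto e^{A\cdot}x - LL^*R^*R\hat{y}$ improves summability/regularity at each pass: starting from $\hat{y}\in L^2(0,\infty;Y)$ and $\hat{u}=-L^*R^*R\hat{y}\in L^p(0,\infty;U)$ for all $p<\infty$ (Proposition~\ref{p:reg-uy}), we feed this into $L$ using the decisive mapping property Proposition~\ref{p:stimeL}(v): $L$ sends $L^r(0,\infty;U)$ with $r\ge q'$ into $C_b([0,\infty);\cD(A^\epsilon))$. Since $\hat{u}\in L^r$ for every finite $r$, in particular for some $r\ge q'$, we get $L\hat{u}\in C_b([0,\infty);\cD(A^\epsilon))$, and combined with $e^{A\cdot}x\in C_b([0,\infty);\cD(A^\epsilon))$ this already yields $\hat{y}=\Phi(\cdot)x\in C_b([0,\infty);\cD(A^\epsilon))$. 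The $L^p$-in-$\cD(A^\epsilon)$ membership for all $p\in[1,\infty]$ then follows by combining the uniform bound just obtained (which gives $p=\infty$) with the $L^2(0,\infty;Y)$ integrability and interpolating, or more directly by invoking Remark~\ref{r:extend-with-exponents} to run the same estimate in an $L^p_\delta$-space with a small exponential weight, which converts the $L^\infty$-bound into finite $L^p$-norms for every $p\ge1$; the continuous dependence on $x$ is tracked through every inequality.

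For \eqref{e:goal_2} the plan is to return to the optimality condition in the form $\hat{u}=-L^*R^*R\hat{y}$ and read off continuity from the regularity of $L^*$ acting on the now-smoother $\hat{y}$. Since $\hat{y}=\Phi(\cdot)x\in L^p(0,\infty;\cD(A^\epsilon))$ for all $p$, in particular for some $p>\tfrac1{1-\gamma}$, we may apply Proposition~\ref{p:stimeL*}(iv): $L^*$ maps $L^p(0,\infty;\cD({A^*}^\epsilon))$ continuously into $L^\infty(0,\infty;U)$. Strictly speaking this gives boundedness; to upgrade to $C_b$ one writes $\hat{u}(t)=-\int_t^\infty B^*e^{A^*(\sigma-t)}R^*R\hat{y}(\sigma)\,d\sigma$ and estimates the increment $\hat{u}(t)-\hat{u}(\tau)$ exactly as in the proof of Proposition~\ref{p:stimeL}(v): split the integral into the near-diagonal piece $\int_\tau^t$ (controlled by the $L^q$-in-time bound on $B^*e^{A^*\cdot}{A^*}^\epsilon$ from Proposition~\ref{p:expBA} times a vanishing $L^{q'}$-norm of $\hat{y}$ over a short interval) and the tail piece, where the shift $\hat{y}(\cdot+t-\tau)-\hat{y}(\cdot)$ has small $L^p(0,\infty;\cD(A^\epsilon))$-norm by continuity of translation. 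Both terms tend to zero as $\tau\to t$, giving $\hat{u}\in C_b([0,\infty);U)$.

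The main obstacle, as always in this circle of ideas, is making sure the time-regularity (continuity in $t$) genuinely propagates and is not merely a bound; the $L^\infty$ bounds from Proposition~\ref{p:stimeL}(v) and Proposition~\ref{p:stimeL*}(iv) come with continuity built in (they are stated as $C_b$, resp.\ require the explicit increment estimate reproduced in the proof of Proposition~\ref{p:stimeL}(v)), so the delicate point is to verify that the convolution structures $L$ and $L^*$ applied to the specific functions $\hat{u}$ and $\hat{y}$---which themselves depend on the solution---are covered by those propositions, i.e.\ that the summability exponents line up: one needs $\hat{u}\in L^r$ with $r\ge q'$ and $\hat{y}\in L^p$ with $p>\tfrac1{1-\gamma}$, and both hold because Proposition~\ref{p:reg-uy} delivers every finite exponent. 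A secondary technical nuisance is that $q'$ may exceed $2$, so the bootstrap in Proposition~\ref{p:reg-uy} must already have reached exponents past $q'$ before Proposition~\ref{p:stimeL}(v) can be invoked---but since that bootstrap terminates with ``all $p<\infty$'' this is automatic. Tracking the constants to obtain the asserted continuous dependence on $x\in\cD(A^\epsilon)$ is then routine.
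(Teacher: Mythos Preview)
Your argument is correct and in fact more economical than the paper's. The paper works from the representation $\Phi(\cdot)x=(I-L\Lambda^{-1}L^*R^*R)e^{A\cdot}x$ and, in order to place $\Lambda^{-1}L^*R^*Re^{A\cdot}x$ in $L^\infty_\theta(0,\infty;U)$, proves a separate invertibility result (Claim~A.1) asserting that $\Lambda^{-1}$ is bounded from $L^\infty_\delta(0,\infty;U)$ into $L^\infty_{\delta-\sigma_0}(0,\infty;U)$; that claim has its own bootstrap and relies on Lemma~\ref{l:invertible-on-L2delta}. You bypass this entirely by observing that $\Lambda^{-1}L^*R^*Re^{A\cdot}x=-\hat{u}$, whose membership in $L^{q'}(0,\infty;U)$ (and, with the weighted versions recorded in Remark~\ref{r:extend-with-exponents} together with the exponential stability of $\Phi(t)$, in $L^{q'}_\delta(0,\infty;U)$) is already furnished by Proposition~\ref{p:reg-uy}; Proposition~\ref{p:stimeL}(v) then delivers $L\hat{u}\in C_b([0,\infty);\cD(A^\epsilon))$ and its weighted analog gives the full range $L^p$, $p\in[1,\infty]$. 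For \eqref{e:goal_2} the paper instead appeals to the feedback formula $\hat{u}=-B^*P\Phi(\cdot)x$ combined with $B^*P\in\cL(\cD(A^\epsilon),U)$, which is shorter once that formula is available, but your direct increment estimate via $\hat{u}=-L^*R^*R\hat{y}$ and Proposition~\ref{p:expBA} is self-contained and equally valid. The trade-off: the paper's route, though heavier here, isolates Claim~A.1 as a reusable tool (mirroring Theorem~\ref{t:invertible-on-X_q}); your route exploits that the optimal control's regularity is already known and avoids any new inversion lemma.
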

  
\smallskip
Assuming $x\in\cD(A^\epsilon)$, by Proposition~\ref{p:statement-eight} we know that 
$\Phi(\cdot\,)x\in L^p(0,\infty;\cD(A^\epsilon))$ and we can appeal to the same arguments used 
in the proof of Theorem~\ref{t:bounded-gain} to give a rigorous justification to \eqref{e:to-be-justified}.
It is now true that \eqref{e:link-riccatiop-functionp}, combined with \eqref{e:to-be-justified}, provides the feedback representation of the optimal control, initially for $x\in\cD(A^\epsilon)$: 
\begin{equation*} 
\hat{u}(t,x) = - B^*P\Phi(t)x \qquad \textrm{for any $x\in \cD(A^\epsilon)$ and for a.e. $t>0$.}
\end{equation*}
This formula is easily extended to all of $x\in Y$, as shown in the Proposition below.
 
\begin{proposition}[Statement S9.~of Theorem~\ref{t:main}] \label{p:feedback}
The (pointwise in time) feedback representation of the optimal control 
\begin{equation} \label{e:feedback}
\hat{u}(t,x) = - B^*P\Phi(t)x\,, \qquad \textrm{for a.e. $t>0$,}
\end{equation}
is valid for any initial state $x\in Y$.
\end{proposition}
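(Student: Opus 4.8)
The plan is to pass from the feedback identity already established on the dense set $\cD(A^\epsilon)$ to an arbitrary $x\in Y$ by a density argument, exploiting the continuous dependence on the initial datum afforded by Proposition~\ref{p:reg-uy} and the exponential decay built into the framework. First I would fix $x\in Y$ and a sequence $x_n\in\cD(A^\epsilon)$ with $x_n\to x$ in $Y$; such a sequence exists because $\cD(A^\epsilon)$ is dense in $Y$. By linearity and by the estimates \eqref{e:continuously-state}--\eqref{e:continuously-control} of Proposition~\ref{p:reg-uy}, the optimal controls $\hat{u}(\cdot,x_n)$ converge to $\hat{u}(\cdot,x)$ in $L^p(0,\infty;U)$ for every $p\in[2,\infty)$ (in particular in $L^2$), and the optimal states $\Phi(\cdot)x_n=\hat{y}(\cdot,x_n)$ converge to $\Phi(\cdot)x$ in $L^p(0,\infty;Y)\cap C_b([0,\infty);Y)$; this is immediate from the fact that the maps $x\mapsto\hat{u}(\cdot,x)$ and $x\mapsto\hat{y}(\cdot,x)$ are bounded linear operators into the respective spaces.

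Next I would write the feedback identity for each $x_n$, namely $\hat{u}(t,x_n)=-B^*P\Phi(t)x_n$ for a.e.\ $t>0$, which holds by the computation just preceding Proposition~\ref{p:feedback}. The left-hand sides converge to $\hat{u}(\cdot,x)$ in $L^2(0,\infty;U)$. For the right-hand sides, the difficulty is that $B^*P$ is \emph{not} bounded on $Y$—it is only bounded on $\cD(A^\epsilon)$ by Theorem~\ref{t:bounded-gain}—so one cannot simply pass to the limit inside $B^*P$. The remedy is to keep the gain glued to the operator $\Phi(t)$ and to revert to the integral representation: by \eqref{e:true} and the very definition \eqref{e:def-T} of $\cT$, for $x_n\in\cD(A^\epsilon)$ we have
\begin{equation*}
\hat{u}(t,x_n)=-\,\cT\Phi(t)x_n=-\int_0^\infty B^*e^{A^*\sigma}R^*R\,\Phi(\sigma)\big(\Phi(t)x_n\big)\,d\sigma\,,
\end{equation*}
using the semigroup property $\Phi(\sigma)\Phi(t)=\Phi(\sigma+t)$ and the relabelling already exploited in deriving \eqref{e:link-riccatiop-functionp}. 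The point is that the estimates \eqref{e:magg1}, \eqref{e:magg2}, \eqref{e:magg3} in the proof of Theorem~\ref{t:bounded-gain}, \emph{taken together with} the regularity of $\Phi(t)x$ for a \emph{general} $x\in Y$ provided by Proposition~\ref{p:reg-uy}—here one uses that the summands $F(\sigma)R^*R\Phi(\sigma+t)x$ and $G(\sigma)R^*RL\hat{u}(\cdot,x)(\sigma+t)$ are controlled by $\|x\|_Y$ uniformly, while the only term genuinely requiring $x\in\cD(A^\epsilon)$, namely $G(\sigma)R^*Re^{A\sigma}(\cdot)$, reappears here as $G(\sigma)R^*Re^{A\sigma}\Phi(t)x$ with $\Phi(t)x$ a \emph{smooth by regularization along the optimal flow}—allow one to show that the right-hand side above extends to a bounded expression in $\|x\|_Y$ once integrated in $t$.

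Concretely, the cleaner route—and the one I expect to be the main obstacle to carry out carefully—is to observe that $B^*P\Phi(t)$, as a whole, is a bounded operator from $Y$ into $U$ for a.e.\ fixed $t>0$, and more to the point that $t\mapsto B^*P\Phi(t)x$ is a well-defined element of $L^2(0,\infty;U)$ depending continuously on $x\in Y$. This follows by applying the three estimates \eqref{e:magg1}--\eqref{e:magg3} to $\cT\Phi(t)x$ in place of $\cT x$, invoking in the second of them the transition property \eqref{e:transition} to write $\Phi(t)x=\hat{y}(t,x)=e^{At}\hat{y}(0)+\dots$—more precisely one uses \eqref{e:introduces-Lambda} to represent $\Phi(t)x$ and then the regularity $\Phi(\cdot)x\in C_b([0,\infty);Y)$ to absorb the $e^{A\sigma}$-term against the $L^1$-in-$\sigma$ kernel coming from $G(\sigma)R^*R$ via Proposition~\ref{p:expGf}. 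Granting this, the operators $x\mapsto\big(t\mapsto B^*P\Phi(t)x\big)$ and $x\mapsto\hat{u}(\cdot,x)$ are both bounded linear from $Y$ into $L^2(0,\infty;U)$ and they agree on the dense subspace $\cD(A^\epsilon)$; hence they agree on all of $Y$, which gives \eqref{e:feedback} as an identity in $L^2(0,\infty;U)$, i.e.\ for a.e.\ $t>0$, for every $x\in Y$. The genuinely delicate point throughout is that one must never isolate $B^*P$ acting on a merely-$Y$ vector: the boundedness is available only for the composite $B^*P\Phi(t)$, which regularizes $x$ before the unbounded gain is applied, and making that bookkeeping rigorous—tracking which of \eqref{e:magg1}--\eqref{e:magg3} needs $\cD(A^\epsilon)$-regularity of the \emph{argument} versus regularity merely in $t$—is where the care is needed.
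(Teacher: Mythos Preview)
Your overall strategy---show that both $x\mapsto \hat{u}(\cdot,x)$ and $x\mapsto B^*P\Phi(\cdot)x$ are bounded linear from $Y$ to $L^2(0,\infty;U)$, then conclude by density from their coincidence on $\cD(A^\epsilon)$---is exactly the paper's. But you overcomplicate the middle step and introduce an unjustified claim.

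You try to prove the $L^2(0,\infty;U)$-boundedness of $x\mapsto B^*P\Phi(\cdot)x$ \emph{directly}, by revisiting the three pieces \eqref{e:magg1}--\eqref{e:magg3} with $\Phi(t)x$ in place of $x$. This forces you to confront the term $G(\sigma)R^*Re^{A\sigma}\Phi(t)x$, whose bound \eqref{e:magg2} genuinely requires $\Phi(t)x\in\cD(A^\epsilon)$. Your workaround---that $\Phi(t)x$ is ``smooth by regularization along the optimal flow''---is not available here: for a general $x\in Y$ there is no reason for $\Phi(t)x\in\cD(A^\epsilon)$ (Proposition~\ref{p:statement-eight} gives this \emph{only} when $x\in\cD(A^\epsilon)$ to begin with). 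So that route, as written, has a gap.

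The paper's argument avoids this entirely by a one-line observation you almost state but then sidestep: since $\hat{u}(\cdot,x)=-B^*P\Phi(\cdot)x$ already holds for $x\in\cD(A^\epsilon)$, the bound $\|\hat{u}(\cdot,x)\|_{L^p(0,\infty;U)}\le C\|x\|_Y$ from \eqref{e:continuously-control} \emph{immediately} gives $\|B^*P\Phi(\cdot)x\|_{L^p(0,\infty;U)}\le C\|x\|_Y$ for all $x\in\cD(A^\epsilon)$, with no need to reopen the integral estimates. Then $B^*P\Phi(\cdot)$ extends by density to a bounded operator $Y\to L^p(0,\infty;U)$, and the two bounded extensions of the same densely defined operator must agree. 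In short: the bound you labor to establish is free once you use the identity on the dense set to \emph{transfer} the already-known bound on $\hat{u}$.
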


\begin{proof}
We know that the relation \eqref{e:feedback} holds for a.e. $t>0$ and for any 
$x\in \cD(A^\epsilon)$. 
Thus, recalling the (continuous with respect to $x$) estimate 
\eqref{e:continuously-control} for the optimal control, we obtain 
\begin{equation}
\|B^*P\Phi(\cdot)\|_{L^p(0,\infty;U)}=\|\hat{u}(\cdot)\|_{L^p(0,\infty;U)}\le C\|x\|_Y
\qquad \forall x\in \cD(A^\epsilon)\,.
\end{equation}
By density, the operator $B^*P\Phi(\cdot)$ is extended to a bounded operator from $Y$ to
$L^p(0,\infty;U)$. Consequently, 
\begin{equation*} 
\hat{u}(t,x) = - B^*P\Phi(t)x \qquad \textrm{for any $x\in Y$ and for a.e. $t>0$.}
\end{equation*}
i.e. the feedback formula \eqref{e:feedback} is extended as well to all $x\in Y$. 
\end{proof}

\section{Towards well-posedness of the Algebraic Riccati Equation} \label{s:are}
In this section we show that the optimal cost operator $P$ introduced in 
Section~\ref{s:gain} does solve the ARE \eqref{e:riccati-eq} corresponding to the optimal 
control Problem~\ref{p:problem-0}.
Owing to formula \eqref{e:definition-of-riccatiop}, this issue is strongly related to 
certain differential properties of the optimal state semigroup $\Phi(t)$, which are 
discussed in Section~\ref{ss:differentiate-Phi(t)B}, culminating with the statement 
of Corollary~\ref{c:differentiability-on-domain_A}.

We will see that a major challenge arises on the operator-theoretic side, as the mapping  
$\Lambda=I+L^*R^*RL$---which, like in previous theories, 
is an {\em isomorphism} on $L^2(0,\infty;U)$---fails to be an {\em isomorphism} in 
$L^q(0,\infty;U)$ or in $L^q_{\delta}(0,\infty;U)$, whilst it is required to admit a bounded inverse acting at least on the latter space.
A solution to this question is given by the distinct result established with
Theorem~\ref{t:invertible-on-X_q}, based on Lemma~\ref{l:invertible-on-L2delta}.
 
\smallskip
To begin with, we provide a preliminary description of the  
the generator $A_P$ of the optimal state semigroup $\Phi(t)$. 
  
\subsection{The optimal state generator $A_P$. Basic facts}
Let $A_P:\cD(A_P)\subset Y\to Y$ be the (optimal state) generator defined by 
\eqref{e:optimal-state-generator}, i.e. the infinitesimal generator of the strongly continuous semigroup $\Phi(t)$.
With $x\in \cD(A_P)$, we know that 
\begin{equation*}
\exists \lim_{t\to 0^+ }\Big(\frac{\Phi(t)x-x}{t},z\Big) =:(A_Px,z) \qquad \forall z\in Y\,,
\end{equation*}
which---by using the representation of the optimal state $\Phi(t)x$ in terms of the optimal control $\hat{u}(t,x)$---is readily rewritten as follows
\begin{equation*}
\exists \lim_{t\to 0^+ }\Big(\frac{e^{At}x-x}{t}
+\frac1{t}\,\int_0^t e^{A(t-\tau)}B\hat{u}(\tau;x)\,d\tau,z\Big) 
=(A_Px,z) \qquad \forall z\in Y\,.
\end{equation*}
Still with $x\in \cD(A_P)$ and taking now $z\in \cD(A^*)$, we find   
\begin{align*}
& \frac1{t}(\Phi(t)x-x,z)_Y =
\\
& \qquad = \frac1{t}([e^{At}-I]A^{-1}x,A^*z)_Y+\frac1{t}\Big(\int_0^t e^{A(t-\tau)}A^{-1}B\hat{u}(\tau,x)\,d\tau,A^*z\Big)_Y\,,
\end{align*}
that is, by setting $w=A^*z$,
\begin{equation} \label{e:sum-of-limits}
\begin{aligned}
& \frac1{t}(\Phi(t)x-x,(A^*)^{-1}w)_Y=
\\
& \qquad =\frac1{t}([e^{At}-I]A^{-1}x,w)_Y+\frac1{t}\Big(\int_0^t e^{A(t-\tau)}A^{-1}B\hat{u
}(\tau,x)\,d\tau,w\Big)_Y\,.
\end{aligned}
\end{equation}
Thus, if we let $t\to 0^+$ in \eqref{e:sum-of-limits}, we see that the left hand side tends to 
$(A_Px,(A^*)^{-1}w)_Y$, whilst the first summand in the right hand side converges to $(x,w)_Y$. 
As for the second summand in \eqref{e:sum-of-limits}, we employ the feedback representation of the optimal control \eqref{e:feedback} and observe that $x\in \cD(A_P)$ yields
$\Phi(\cdot)x\in C([0,T],\cD(A_P))$, which implies $P\Phi(\cdot)x\in C([0,T],\cD(A^*))$,
since $P\in \cL(\cD(A_P),\cD(A^*))$ (this fact will be shown later: see statement (i) of Lemma~\ref{l:riccati-op}).
Consequently, $B^*P\Phi(\cdot)x\in C([0,T],U)$ and the map
\begin{equation}\label{e:integrand-continuous}
\tau\mapsto e^{A\tau}A^{-1}B\hat{u}(\tau,x)= -e^{A\tau}A^{-1}B\, B^*P\Phi(\tau)x
\end{equation}
is continuous on $[0,\infty)$ with values in $Y$, thereby ensuring that
\begin{equation*}
\exists \lim_{t\to 0^+}
\frac1{t}\int_0^t e^{A(t-\tau)}A^{-1}B\hat{u}(\tau,x)\,d\tau = -A^{-1}B\, B^*Px\,,
\quad x\in \cD(A_P)\,.
\end{equation*}

Now in view of the remarks above, \eqref{e:sum-of-limits} yields for any $x\in \cD(A_P)$  
\begin{equation*}
(A_Px,(A^*)^{-1}w)_Y=(x,w)_Y-(A^{-1}B\,B^*Px,w)_Y \qquad \forall w\in Y\,,
\end{equation*}
that is  
\begin{equation*}
(A^{-1}A_Px,w)_Y=(x-A^{-1}B\,B^*Px,w)_Y \qquad \forall w\in Y\,,
\end{equation*}
meaning that $x-A^{-1}B\,B^*Px\in \cD(A)$, with  
\begin{equation} \label{e:dom-a_p-described}
A_Px=A(x-A^{-1}B\,B^*Px) \qquad \forall x\in \cD(A_P)\,.
\end{equation} 
In addition, semigroup theory provides the basic differential property  
\begin{equation*}
\frac{d}{dt}\Phi(t)x= A[I-A^{-1}BB^*P]\Phi(t)x = \Phi(t)A[I-A^{-1}BB^*P]x 
\quad \forall x\in\cD(A_P)\,.
\end{equation*}

We have established Statement S5. of Theorem~\ref{t:main}, which is recorded in the following 
Proposition.
\begin{proposition}[Statement S5. of Theorem~\ref{t:main}] \label{p:statement-5}
The infinitesimal generator $A_P$ of the (optimal state) semigroup $\Phi(t)$ 
defined in \eqref{e:optimal-state-semigroup} coincides with the operator 
$A(I-A^{-1}BB^*P)$, on the domain
\begin{align}
\cD(A_P)& \subset \big\{x\in Y: x-A^{-1}BB^*Px\in \cD(A) \big\}
\label{e:inclusion_1}\\
& \subset \big\{ x\in Y: \exists \wlim_{t\to 0^+} \frac1{t} \int_0^t e^{A(t-\tau)}A^{-1}BB^*P\Phi(\tau)x\,d\tau 
\label{e:inclusion_2} 
\end{align}

\end{proposition}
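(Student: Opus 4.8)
The plan is to reconstruct Statement~S5 by tracking, for $x\in\cD(A_P)$, the decomposition of the incremental quotient $\frac1t(\Phi(t)x-x,w)_Y$ tested against $w\in Y$ via $z=(A^*)^{-1}w\in\cD(A^*)$, exactly as laid out in the paragraphs preceding the Proposition. First I would write $\Phi(t)x = e^{At}x + \int_0^t e^{A(t-\tau)}B\hat u(\tau,x)\,d\tau$ and pair with $A^*z$ after pulling out one power of $A^{-1}$, obtaining the splitting \eqref{e:sum-of-limits}. The first summand converges to $(x,w)_Y$ by strong continuity of the semigroup on $Y$. The genuine work is the second summand: one must show that
\begin{equation*}
\frac1t\int_0^t e^{A(t-\tau)}A^{-1}B\hat u(\tau,x)\,d\tau \longrightarrow -A^{-1}BB^*Px \quad\text{in }Y\text{ as }t\to 0^+.
\end{equation*}
This is where the feedback representation \eqref{e:feedback}, $\hat u(\tau,x)=-B^*P\Phi(\tau)x$, enters, together with the continuity in $Y$ of the integrand $\tau\mapsto -e^{A\tau}A^{-1}BB^*P\Phi(\tau)x$.

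The key sub-step is therefore the regularity chain $x\in\cD(A_P)\Rightarrow\Phi(\cdot)x\in C([0,T];\cD(A_P))\Rightarrow P\Phi(\cdot)x\in C([0,T];\cD(A^*))\Rightarrow B^*P\Phi(\cdot)x\in C([0,T];U)$. The first implication is standard semigroup theory ($\cD(A_P)$ is invariant under $\Phi(t)$ and the restriction is strongly continuous there). The second implication relies on $P\in\cL(\cD(A_P),\cD(A^*))$; since this is only proved later (Lemma~\ref{l:riccati-op}(i)), I would flag this as a forward reference, as the authors do, rather than reproving it here. Once $B^*P\Phi(\cdot)x$ is continuous into $U$ and $A^{-1}B\in\cL(U,Y)$, the map in \eqref{e:integrand-continuous} is $Y$-valued continuous on $[0,\infty)$, so the averaged integral converges strongly to its value at $\tau=0$, namely $-A^{-1}BB^*Px$. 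Passing to the limit in \eqref{e:sum-of-limits} then yields, for every $w\in Y$,
\begin{equation*}
(A_Px,(A^*)^{-1}w)_Y=(x,w)_Y-(A^{-1}BB^*Px,w)_Y,
\end{equation*}
i.e. $(A^{-1}A_Px,w)_Y=(x-A^{-1}BB^*Px,w)_Y$ for all $w$, whence $x-A^{-1}BB^*Px\in\cD(A)$ with $A_Px=A(x-A^{-1}BB^*Px)$, establishing the inclusion \eqref{e:inclusion_1} and the identification of $A_P$.

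For the second inclusion \eqref{e:inclusion_2}, I would observe that it is strictly weaker: once we know the strong limit of $\frac1t\int_0^t e^{A(t-\tau)}A^{-1}BB^*P\Phi(\tau)x\,d\tau$ exists (equal to $A^{-1}BB^*Px$), a fortiori the weak limit exists, so every $x\in\cD(A_P)$ lies in the set on the right of \eqref{e:inclusion_2}; this is immediate from what was already shown. Finally I would append the semigroup differential identity $\frac{d}{dt}\Phi(t)x = A(I-A^{-1}BB^*P)\Phi(t)x = \Phi(t)A(I-A^{-1}BB^*P)x$ for $x\in\cD(A_P)$, which follows from the generator description together with invariance of $\cD(A_P)$ under $\Phi(t)$. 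The main obstacle is genuinely the justification that the integrand is $Y$-continuous near $\tau=0$: without the gain $B^*P$ being bounded on the relevant domain and without $P\Phi(\cdot)x$ landing in $\cD(A^*)$, the expression $A^{-1}BB^*P\Phi(\tau)x$ is only defined in $[\cD(A^*)]'$ and the averaging argument collapses; so the proof is, in effect, parasitic on Theorem~\ref{t:bounded-gain} (gain bounded on $\cD(A^\epsilon)$), Proposition~\ref{p:statement-eight} ($\Phi(\cdot)x$ preserves $\cD(A^\epsilon)$), and the forthcoming $P\in\cL(\cD(A_P),\cD(A^*))$, all of which I would invoke as given.
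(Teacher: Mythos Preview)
Your proposal is correct and follows essentially the same route as the paper's own argument: the decomposition \eqref{e:sum-of-limits}, the regularity chain $x\in\cD(A_P)\Rightarrow\Phi(\cdot)x\in C([0,T];\cD(A_P))\Rightarrow P\Phi(\cdot)x\in C([0,T];\cD(A^*))\Rightarrow B^*P\Phi(\cdot)x\in C([0,T];U)$ with the explicit forward reference to Lemma~\ref{l:riccati-op}(i), and the averaging argument for the strong limit of the second summand, all match the paper's derivation. Your handling of \eqref{e:inclusion_2} as the weaker (weak-limit) consequence is likewise what the paper does; the only minor remark is that in your closing paragraph you list Theorem~\ref{t:bounded-gain} and Proposition~\ref{p:statement-eight} among the dependencies, whereas for the $\cD(A_P)$ route the paper actually leans only on the forward reference $P\in\cL(\cD(A_P),\cD(A^*))$ together with $B^*\in\cL(\cD(A^*),U)$---the $\cD(A^\epsilon)$ ingredients appear only in the subsequent Remark~\ref{r:operator-gamma} as an alternative path.
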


\medskip



\begin{remark} \label{r:operator-gamma}
\begin{rm}
We observe that for $x \in \cD(A_P)$ the weak limit 
\begin{equation*}
\lim_{t\to 0^+}\frac1{t}\Big(\int_0^t e^{A(t-\tau)}A^{-1}B\hat{u
}(\tau,x)\,d\tau,w\Big)_Y \quad \forall w\in Y
\end{equation*}
defines a linear operator, which we denote by $\Gamma$, that is 
\begin{align}
\Gamma x &:= \wlim_{t\to 0^+}\frac1{t}\int_0^t e^{A(t-\tau)}A^{-1}B\hat{u}(\tau,x)\,d\tau
\label{e:weak-limit_1}
\\
& =-\,\wlim_{t\to 0^+}\frac1{t}\int_0^t e^{A(t-\tau)}A^{-1}B\,B^*P\Phi(\tau)x\,d\tau\,,  
\label{e:weak-limit_2}
\end{align}
with dense domain in $Y$, as $\cD(A_P)\subseteq \cD(\Gamma)$.
Then, in the discussion leading to the statement of Proposition~\ref{p:statement-5}
we have shown that when $x\in \cD(A_P)$ then $\Gamma x=-A^{-1}B\,B^*Px$, 
i.e.~$\Gamma$ coincides with the operator $-A^{-1}B\,B^*P$ on $\cD(A_P)$.

We note, in addition, that if $x\in \cD(A^\epsilon)$, owing to \eqref{e:goal_2} of Proposition~\ref{p:statement-eight} the map in \eqref{e:integrand-continuous} is continuous as well, 
so that the weak limit \eqref{e:weak-limit_1} is strong, 
$\cD(A^\epsilon)\subseteq \cD(\Gamma)$, and we find again 
\begin{equation*}
\Gamma x= -A^{-1}B\,B^*Px \qquad x\in \cD(A^\epsilon)\,.
\end{equation*} 

\end{rm}
\end{remark}


\begin{remark}
\begin{rm}
Pretty much in the same way we obtain for $x\in \cD(A)$ the existence of the weak limit
\begin{equation*}
\lim_{t\to 0^+}\frac1{t}\Big(\int_0^t A_P^{-1}e^{A(t-\tau)}B\hat{u}(\tau,x)\,d\tau,w\Big)_Y 
\quad \forall w\in Y\,, 
\end{equation*}
which defines a linear operator, which we denote by $\Gamma_P$, that is 
\begin{align*}
\Gamma_P x &:= \wlim_{t\to 0^+}\frac1{t}\int_0^t A_P^{-1}e^{A(t-\tau)}B\hat{u}(\tau,x)\,d\tau
\\
& =-\,\wlim_{t\to 0^+}\frac1{t}\int_0^t A_P^{-1}e^{A(t-\tau)}B\,B^*P\Phi(\tau)x\,d\tau\,,  
\end{align*}
with dense domain in $Y$, as $\cD(A)\subseteq \cD(\Gamma_P)$. 
Moreover, 
we have 
\begin{equation*}
(Ax,{A_P^*}^{-1}w)_Y=(x,w)_Y-(\Gamma_Px,w)_Y \qquad \forall w\in Y\,,
\end{equation*}
i.e. $x-\Gamma_Px\in \cD(A_P)$, with  
\begin{equation*} 
Ax=A_P(x-\Gamma_Px) \qquad \forall x\in \cD(A)\,.
\end{equation*} 

It is interesting to note that although an explicit representation of $\Gamma_Px$ for 
$x\in \cD(A)$ is missing, we will later see that the key issue is not so much to be able to
deal with $\Gamma_Px$ by itself, but rather to find the ``right'' representation of 
the operator $e^{A_Pt}A_P\Gamma_P$, along with an appropriate regularity (in time and space).
Specifically, we will establish
\begin{equation*}
e^{A_Pt}A_P\Gamma_P\equiv e^{A_Pt}B\,B^*P: \cD(A) \longrightarrow [\cD({A^*}^\epsilon]' \qquad
\textrm{for a.e.~$t>0$,}
\end{equation*}
a property which is central to the proof of Corollary~\ref{c:differentiability-on-domain_A}
and then to well-posedness of the algebraic Riccati equations. 
\end{rm}
\end{remark}


\begin{remark}
\begin{rm}
It is important to emphasize that the statement of Proposition~\ref{p:statement-5} cannot be improved to assert the equivalence between $\cD(A_P)$ and the subset of $Y$ in 
\eqref{e:inclusion_2} (which is characterized by the existence of the discussed weak 
limit $\Gamma x$).
In fact, assuming $x\in Y$ is such that $\Gamma x$ does exist, we only obtain that
\begin{equation*}
\exists \lim_{t\to 0^+ }\Big(\frac{\Phi(t)x-x}{t},z\Big) \qquad \forall z\in \cD(A^*)\,,
\end{equation*} 
which is not sufficient to conclude that $x\in \cD(A_P)$.

One may also wonder whether the inclusion in \eqref{e:inclusion_1} becomes an equality, possibly adding the restriction $x\in \cD(A^\epsilon)$.
However, this is not the case.
In fact, assuming $x\in \cD(A^\epsilon)$, with $x-A^{-1}BB^*Px\in \cD(A)$, from the decomposition 
\begin{align*} 
\Big(\frac{\Phi(t)x-x}{t},z\Big) &=  \Big(\frac{1}{t}\big[e^{At}-I\big](x - A^{-1}BB^*Px),z\Big) +
\\
& \quad +\Big(\frac{1}{t}\int_0^t e^{A(t-s)} BB^*P[x-\Phi(s)x],z\Big)\,,
\qquad t> 0\,, \; z\in Y\,,
\end{align*}
we immediately see that the first summand on the right hand side converges 
to $A(x - A^{-1}BB^*Px)$ as $t\to 0$, while to make sure that the second summand is infinitesimal 
we need $z\in \cD(A^*)$.
Thus, the same conclusion as above follows.
\end{rm}
\end{remark}

\bigskip
Although a full characterization of $\cD(A_P)$ is missing, the following Proposition
clarifies the relation between $\cD(A_P)$ and $\cD(A^\epsilon)$, as well as between
the domains of the corresponding adjoint operators.
This result will be also employed in the proof of Corollary~\ref{c:differentiability-on-domain_A}.
   
\begin{proposition} \label{p:berliner-inclusion}
The following inclusions are valid, provided $\epsilon< 1-\gamma$:
\begin{equation} \label{e:inclusion-of-domains}
\cD(A_P)\subseteq \cD(A^\epsilon)\,, 
\qquad
\cD(A^*_P)\subseteq \cD({A^*}^\epsilon)\,.
\end{equation}
  
\end{proposition}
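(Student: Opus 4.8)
The plan is to establish the inclusion $\cD(A_P)\subseteq \cD(A^\epsilon)$ directly from the formula $A_Px = A(x - A^{-1}BB^*Px)$ of Proposition~\ref{p:statement-5}, and then obtain the dual inclusion $\cD(A^*_P)\subseteq \cD({A^*}^\epsilon)$ by a symmetric argument applied to the adjoint data. First I would take $x\in\cD(A_P)$ and note that $x = A^{-1}A_Px + A^{-1}BB^*Px$; the first term lies in $\cD(A)\subset\cD(A^\epsilon)$, so it suffices to show that $A^{-1}BB^*Px\in\cD(A^\epsilon)$, i.e.\ that $A^{-1}B$ maps $U$ into $\cD(A^\epsilon)$ in a manner compatible with the gain. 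Here $B^*Px\in U$ is meaningful because $x\in\cD(A_P)\subseteq\cD(A^\epsilon)$ is not yet known---so instead I would argue iteratively: a priori $x\in Y$, hence $B^*Px$ is only defined via the weak-limit construction of Theorem~\ref{t:bounded-gain}; but one can bypass this by working with the resolvent of $A_P$ and the representation $\Phi(t)x$, exploiting that for $x\in\cD(A_P)$ the map $t\mapsto\Phi(t)x$ is $C^1$ into $Y$, so $\int_0^\infty e^{-\lambda t}\Phi(t)x\,dt$ and its regularity can be analyzed through the identity \eqref{e:introduces-Lambda}.

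The cleaner route, which I would actually pursue, is interpolation: combine the smoothing of the input-to-state map with the representation of $\Phi(t)x$. Writing $\Phi(t)x = e^{At}x + L\hat u(t)$ and recalling that for $x\in\cD(A_P)$ we have $\hat u = -B^*P\Phi(\cdot)x$ with $\Phi(\cdot)x\in C([0,T];\cD(A_P))$, one gets $\hat u\in C([0,T];U)$, hence $\hat u\in L^r(0,\infty;U)$ for large $r$; then Proposition~\ref{p:stimeL}(v) gives $L\hat u\in C_b([0,\infty);\cD(A^\epsilon))$ provided $r\ge q'$. The term $e^{At}x$ need not lie in $\cD(A^\epsilon)$ for $x\in Y$, so this shows $\Phi(t)x - e^{At}x\in\cD(A^\epsilon)$ but not yet $\Phi(t)x\in\cD(A^\epsilon)$. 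To close the gap I would use that $x\in\cD(A_P)$ means $x = \lim_{t\to0^+}\Phi(t)x$ \emph{with} the stronger information that $\frac{1}{t}(\Phi(t)x - x)\to A_Px$ in $Y$; equivalently $x\in\cD(A_P)$ forces $x = -A_P^{-1}R^*R$-type regularity through the ARE, or more simply one integrates: $x - \Phi(T)x = -\int_0^T \frac{d}{dt}\Phi(t)x\,dt = -\int_0^T A_P\Phi(t)x\,dt$, and since $\Phi(T)x\in\cD(A_P)$ the bootstrap can be iterated, but the decisive input is the hypothesis $\epsilon<1-\gamma$, which ensures the exponent $r$ needed in Proposition~\ref{p:stimeL}(v) is compatible with the summability exponents delivered by the bootstrap of Proposition~\ref{p:reg-uy}. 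I expect the technical heart to be precisely this matching of exponents, i.e.\ showing that the constraint $\epsilon<1-\gamma$ is exactly what makes $q'$ fall below the summability threshold available for $\hat u$.

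For the second inclusion $\cD(A^*_P)\subseteq\cD({A^*}^\epsilon)$ I would dualize. The adjoint semigroup $\Phi(t)^* = e^{A_P^*t}$ is the optimal state semigroup for the \emph{adjoint} LQ-problem governed by $(A^*, R^*, B^*)$ in the appropriate roles, and Hypotheses~\ref{h:ip2} are visibly symmetric under the exchange $F(t)\leftrightarrow F(t)^*$ etc.\ (more precisely, the relevant regularity of $B^*e^{A^*\cdot}{A^*}^\epsilon$ in \ref{h:ip2}(iii)(c) and of $e^{A\cdot}B$ have dual counterparts via Proposition~\ref{p:stimeL*}). So the identical argument, run with starred operators, gives $\cD(A^*_P)\subseteq\cD({A^*}^\epsilon)$ under the same restriction $\epsilon<1-\gamma$. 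Alternatively, and perhaps more robustly, I would invoke the characterization $A_P^* = A^*(I - B^*P\,\overline{A^{-1}B}\,)^*$-type formula together with the fact that $P = P^*\in\cL(\cD(A_P),\cD(A^*))$ (statement S7, proved later via Lemma~\ref{l:riccati-op}); but since Proposition~\ref{p:berliner-inclusion} is used \emph{inside} the proof of that later material, I would avoid circularity and stick to the direct bootstrap-plus-interpolation argument on the adjoint problem, which only uses Assumptions~\ref{h:ip1}--\ref{h:ip2} and the already-established Propositions~\ref{p:stimeL}--\ref{p:stimeL*} and \ref{p:reg-uy}. The main obstacle, to reiterate, is keeping the exponent arithmetic honest: the role of $1-\gamma$ as the critical value below which $\cD(A^\epsilon)$-regularity propagates must be tracked carefully through each application of Propositions~\ref{p:stimeL}(v) and \ref{p:stimeL*}(v).
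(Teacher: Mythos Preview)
Your proposal does not close the argument, and the paper's proof follows a genuinely different route that you should be aware of.

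\medskip

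\textbf{The gap in your approach.} In your first route you need $A^{-1}B\in\cL(U,\cD(A^\epsilon))$, i.e.\ $B^*\colon\cD({A^*}^{1-\epsilon})\to U$ bounded; this is \emph{not} implied by Hypotheses~\ref{h:ip1}--\ref{h:ip2}, which only give $A^{-1}B\in\cL(U,Y)$ together with the dynamical decomposition of $B^*e^{A^*t}$. In your second route you correctly observe that $L\hat u(t)\in\cD(A^\epsilon)$ for $x\in\cD(A_P)$ (using $\hat u\in C([0,\infty);U)$ via $P\in\cL(\cD(A_P),\cD(A^*))$ and Proposition~\ref{p:stimeL}(v)); but this only yields $\Phi(t)x-e^{At}x\in\cD(A^\epsilon)$, and since $e^{At}$ does not regularize, you cannot conclude anything about $x$ itself. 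Your suggested fixes (integrate $A_P\Phi(t)x$, bootstrap from $\Phi(T)x\in\cD(A_P)$) are circular: they all presuppose the inclusion you are trying to prove.

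\medskip

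\textbf{What the paper does instead.} The paper bypasses the need to place $x$ directly in $\cD(A^\epsilon)$ by using the real-interpolation characterization
\[
(Y,\cD(A^*))_{\alpha,\infty}=\bigl\{x\in Y:\ \sup_{0<t\le 1}t^{-\alpha}\|e^{A^*t}x-x\|_Y<\infty\bigr\},
\]
and proves the second inclusion first. For $x\in\cD(A_P^*)$ one has $([\Phi(t)^*-I]x,z)_Y=O(t)$ uniformly in $z\in Y$; writing $(\Phi(t)-I)z=(e^{At}-I)z+L\hat u(\cdot,z)(t)$ and pairing with $x$, one isolates $([e^{A^*t}-I]x,z)_Y=O(t)-T_2(t)$ where $T_2(t)=(x,L\hat u(t))_Y$. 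Splitting $L=L_{(1)}+L_{(2)}$ and applying H\"older with the singular estimate~\eqref{e:keyasF} gives $|(x,L_{(1)}\hat u(t))|\le C\,t^{1/p'-\gamma}\|x\|\|z\|$; choosing $p\ge(1-\gamma-\epsilon)^{-1}$ (possible precisely when $\epsilon<1-\gamma$) makes the exponent $\ge\epsilon$. The $L_{(2)}$ piece yields $O(t^{\alpha_2})$ with $\alpha_2$ arbitrarily close to $1$ via Hypothesis~\ref{h:ip2}(ii). Hence $\|e^{A^*t}x-x\|_Y=O(t^\alpha)$ for some $\alpha>\epsilon$, so $x\in(Y,\cD(A^*))_{\alpha,\infty}\subset\cD({A^*}^\epsilon)$ by the standard embedding of real-interpolation spaces into domains of fractional powers. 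The first inclusion follows by the same argument with the roles of $\Phi$ and $\Phi^*$ reversed.

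The key idea you are missing is to estimate the \emph{increment} $\|e^{A^*t}x-x\|$ rather than try to bound $\|A^\epsilon x\|$ directly; interpolation theory then converts the H\"older rate into fractional-domain membership, and the constraint $\epsilon<1-\gamma$ enters exactly as the condition guaranteeing the rate exceeds $\epsilon$.
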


\begin{proof}
We prove first the latter inclusion in \eqref{e:inclusion-of-domains}, which is in addition
of central importance in the proof of Corollary~\ref{c:differentiability-on-domain_A}; the 
former can be shown using similar arguments.
The proof is based on a well known characterization of the domains of fractional powers 
${A^*}^\epsilon$ in terms of the interpolation spaces $(Y,\cD(A^*))_{\epsilon,2}$.
The idea is to relate first $\cD(A^*_P)$ to one of the interpolation spaces 
$(Y,\cD(A^*))_{\alpha,\infty}$.
These can be also described as follows:  
\begin{equation}\label{e:caratterizzazione-spazio} 
(Y,\cD(A^*))_{\alpha,\infty}\equiv \big\{x\in Y: \, 
\sup_{t \in (0,1]}  t^{-\alpha}\|e^{A^*t}x -x\|_Y < \infty\big\}\,;
\end{equation}
see \cite[Theorem~1.13.2]{triebel}.

\smallskip
We aim to show that $\cD(A^*_P) \subset (Y,\cD(A^*))_{\alpha,\infty}$, 
for $\alpha\in (\epsilon,1)$. Let $x\in \cD(A^*_P)$. Then 
\begin{equation*}
[\Phi(t)^*-I]x=O(t)\,, \qquad t\to 0^+\,,
\end{equation*}
which implies 
\begin{equation*}
([\Phi(t)^*-I]x,z)_Y=O(t)\,, \qquad t\to 0^+\,, 
\end{equation*}
for all $z\in Y$. 
With $x\in \cD(A^*_P)$ fixed and any $z\in Y$, we rewrite 
\begin{equation}\label{e:somma}
\begin{split}
([\Phi(t)^*-I]x,z)_Y&= (x,[\Phi(t)-I]z)_Y 
\\
& =(x,[e^{At}-I]z)_Y + \underbrace{\big(x,\int_0^t e^{A(t-s)}B\hat{u}(s,z)\, ds\big)_Y}_{T_2(t)}
\end{split}
\end{equation}
and focus on the second summand $T_2$, which in turn splits as follows:
\begin{equation*}
T_2(t) =(x,L_{(1)}\hat{u}(t))_Y+(x,L_{(2)}\hat{u}(t))_Y\,.
\end{equation*}
(The operators $L_{(i)}$---resulting from the splitting of $B^*e^{A^*t}$---have been introduced in \eqref{e:input-to-state}; above, we set $L_{(i)}\hat{u}(t)$ in place of 
$(L_{(i)}\hat{u}(\cdot,z))(t)$, $i=1,2$, just for conciseness.)
It is readily seen that 
\begin{align}
|(x,L_{(1)}\hat{u}(t))_Y|
& \le \|x\|_Y\,\int_0^t \frac{C_1}{(t-s)^\gamma}\, \|\hat{u}(s,z)\|_U\, ds 
\nonumber \\ 
& 
\le C_1\,\|x\|_Y\, \|\hat{u}\|_{L^p(0,\infty;U)}\, 
\Big(\int_0^t\frac{1}{(t-s)^{\gamma p'}}\, ds\Big)^{1/p'}
\nonumber \\
& \le C_1\,t^{1/p'-\gamma}\, \|x\|_Y\, \|z\|_Y\,.
\label{e:pre-asymptotic_1}
\end{align}
To achieve the above estimate we have used Assumption~\ref{h:ip2}(i),
the H\"older inequality, as well as the continuity property \eqref{e:continuously-control} established in Proposition~\ref{p:reg-uy}.
Notice that by Proposition~\ref{p:reg-uy} $p$ can be taken arbitrarily large: in particular, here $p>1/(1-\gamma)$ is required, in order to ensure that the exponent $1/p'-\gamma$ is positive. 
In addition, as by assumption $1-\gamma-\epsilon>0$, then we may choose 
$p\ge (1-\gamma-\epsilon)^{-1}$ so that $1/p'-\gamma\ge \epsilon$, 
and \eqref{e:asymptotic_1} reads as 
\begin{equation} \label{e:asymptotic_1}
|(x,L_{(1)}\hat{u}(t))_Y| \le C_1\,t^{\alpha_1}\, \|x\|_Y\, \|z\|_Y
\end{equation} 
with $\alpha_1\ge \epsilon$.

As for the summand $(x,L_{(2)}\hat{u}(t))_Y$, by Proposition~\ref{p:stimeL2}(ii) 
we know that $t\mapsto L_{(2)}\hat{u}(t)$ is a continuous function on the whole half-line 
$[0,\infty)$, as $\hat{u}\in L^p(0,\infty;U)$ for any finite $p\ge 1$.
On the other hand, since we aim here to obtain an asympotic estimate of $L_{(2)}\hat{u}(t)$ 
as $t\to 0^+$, we may set $t\le T$.
Taking the inner product with any $x\in Y$, we find  
\begin{align}
& |(L_{(2)}\hat{u}(t),x)_Y| =\Big|\int_0^t \big(G(t-s)^*\hat{u}(s),x\big)_Y\,ds\Big|
\le \int_0^t \big|(\hat{u}(s),G(t-s)x)_U\big|\,ds
\label{e:before-holder} 
\\ 
& \qquad \le \|\hat{u}\|_{L^p(0,T;U)}\, \Big(\int_0^t \big\|G(t-s)x\big\|_U^q\,ds\Big)^{1/q}\,
\Big(\int_0^t 1\,ds\Big)^{1/r}
\label{e:after-holder}
\\
& \qquad \le \|\hat{u}\|_{L^p(0,\infty;U)}\,\|x\|_Y \,\|G(\cdot)\|_{\cL(Y,L^q(0,T;U))} \,t^{1/r}
\le C\,t^{1/r}\,\|z\|_Y\|x\|_Y \quad  \forall x\in Y\,,
\nonumber
\end{align}
where to go from \eqref{e:before-holder} to \eqref{e:after-holder} we applied the H\"older inequality with $1/p+1/q+1/r=1$, and by Assumption~\ref{h:ip2}(ii) the summability exponent 
$q$, like $p$, can be chosen freely as well. 
Notice that this makes it possible to render $1/r=1-1/p-1/q$ arbitrarily close to $1$.
The above computations yield the pointwise estimate 
\begin{equation} \label{e:asymptotic_2}
|(x,L_{(2)}\hat{u}(t))_Y|\le C_2\,t^{\alpha_2}\,\|x\|_Y\,\|z\|_Y
\end{equation}
with arbitrary $\alpha_2<1$. 
Thus, combining \eqref{e:asymptotic_1} with \eqref{e:asymptotic_2} we find that 
there exists a constant $C$ such that 
\begin{equation} \label{e:asymptotics-T_2}
\begin{split}
|T_2(t)| &\le |(x,L_{(1)}\hat{u}(t))_Y|+|(x,L_{(2)}\hat{u}(t))_Y|
\\
&\le C\,t^{\min\{\alpha_1,\alpha_2\}}\,\|x\|_Y\,\|z\|_Y= O(t^\alpha)\,\|x\|_Y\,\|z\|_Y\,,
\quad 0<t\le T\,,
\end{split}
\end{equation}
with $\epsilon<\alpha<1$.

Returning to \eqref{e:somma}, we have so far shown that 
\begin{equation}
\|(e^{A^*t}-I)x\|_Y = O(t)-O(t^\alpha)= O(t^\alpha)\,, \qquad t\to 0^+\,,
\end{equation}
which in view of \eqref{e:caratterizzazione-spazio} establishes the membership 
$x\in (Y,\cD({A^*}))_{\alpha,\infty}$ for all $\alpha \in (\epsilon,1)$.

Thus, if we recall from \cite[Theorem~1.3.3]{triebel} the inclusions 
\begin{equation*}
(X,Y)_{\alpha,1} \subset (X,Y)_{\alpha,p} \subset (X,Y)_{\alpha,\infty} \subset (X,Y)_{\theta,1}\,,
\end{equation*}
which hold for all $\alpha$, $\theta$, $p$ such that $0<\alpha<\theta< 1$ 
and $1< p <\infty$, we immediately conclude that there exists $\theta\in (\epsilon,1)$ such that 
\begin{equation*}
x\in (Y,\cD({A^*}))_{\theta,2} \equiv\cD({A^*}^{\theta})\,;
\end{equation*}
see, e.g., \cite[\S~0]{las-trig-encyclopedia}. 
Consequently, $x\in \cD({A^*}^{\epsilon})$ which shows $\cD(A^*_P)\subset \cD({A^*}^{\epsilon})$,
thus concluding the proof.
\end{proof}

\subsection{A distinct regularity result pertaining to $e^{A_Pt}$}
\label{ss:differentiate-Phi(t)B} 
We discuss here a distinct regularity property of the optimal state semigroup 
$\Phi(t)=e^{tA_P}$ which will play a crucial role in the proof of well-posedness of the Algebraic Riccati Equations (ARE) corresponding to the optimal control problem.
It is indeed the regularity result established in Proposition~\ref{p:heritage} below which will make it possible to differentiate strongly the semigroup $e^{A_P t}$ on $\cD(A)$, as needed to obtain that the optimal cost operator $P$ does satisfy the ARE on $\cD(A)$.

\begin{remark}
\begin{rm}
The same issue, that is strong differentiability of $e^{A_P t}$ on $\cD(A)$, was addressed as well in the study of the infinite horizon LQ-problem for abstract control systems  which yield {\em singular estimates}; see \cite[Section~3.3]{las-trig-se-1}.
In that work the sought property was easily established in view of the following crucial fact:
$\Phi(t)B$ inherited the same singular estimate as $e^{At}B$.
\end{rm}
\end{remark}

To begin with, let us preliminary state the intrinsic regularity of the map $t \mapsto e^{At}B$
for the class of control systems under investigation.

\begin{lemma} \label{l:reg-S(t)B}
Consider, for $t\ge 0$, the operator $e^{At}B$, defined in $U$ and taking values---a priori---in 
$[\cD(A^*)]'$.
For any $\delta \in [0,\omega\wedge \eta[$ we have
\begin{equation} \label{e:reg-S(t)B}
e^{\delta \cdot}e^{A\cdot}B\in \cL(U,L^s(0,\infty;[\cD({A^*}^\epsilon)]'))
\qquad \forall s\in [1,\frac1{\gamma})\,. 
\end{equation}
\end{lemma}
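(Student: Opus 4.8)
The plan is to deduce the claimed regularity of $t\mapsto e^{\delta t}e^{At}B$ by duality from the regularity already established for its adjoint $B^*e^{A^*t}$, exploiting the decomposition $B^*e^{A^*t}=F(t)+G(t)$ together with Proposition~\ref{p:expG} and Hypothesis~\ref{h:ip2}(i). First I would fix $\delta\in[0,\omega\wedge\eta)$ and $s\in[1,1/\gamma)$, let $s'$ be the conjugate exponent (with $s'=\infty$ when $s=1$), and recall that $[\cD({A^*}^\epsilon)]'$ is the dual of $\cD({A^*}^\epsilon)$ with respect to the $Y$-pivot; thus it suffices to estimate, for $u\in U$ and $z\in\cD({A^*}^\epsilon)$,
\begin{equation*}
\langle e^{At}Bu,\,z\rangle_{\cD({A^*}^\epsilon)} = (u,\,B^*e^{A^*t}z)_U = (u,\,F(t)z)_U + (u,\,G(t)z)_U,
\end{equation*}
and to show that $t\mapsto e^{\delta t}(u,B^*e^{A^*t}z)_U$ lies in $L^s(0,\infty)$ with norm bounded by $c\,\|u\|_U\,\|z\|_{\cD({A^*}^\epsilon)}$.

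For the $F$-term I would use \eqref{e:keyasF}: $|e^{\delta t}(u,F(t)z)_U|\le N\,t^{-\gamma}e^{-(\eta-\delta)t}\|u\|_Y\|z\|_Y$, and since $s\gamma<1$ the function $t^{-\gamma}e^{-(\eta-\delta)t}$ belongs to $L^s(0,\infty)$ — this is exactly where the ceiling $s<1/\gamma$ enters and is seen to be the natural (and only) obstruction for this component. For the $G$-term I would invoke Proposition~\ref{p:expG}: the map $z\mapsto e^{\delta\cdot}G(\cdot)z$ belongs to $\cL(Y,L^p(0,\infty;U))$ for every $p\in[1,\infty)$, in particular for $p=s$ (and here one only needs the $Y$-norm of $z$, so a fortiori the $\cD({A^*}^\epsilon)$-norm suffices). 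Combining the two bounds gives $e^{\delta\cdot}(u,B^*e^{A^*\cdot}z)_U\in L^s(0,\infty)$ with the desired estimate, uniformly over $z\in\cD({A^*}^\epsilon)$ with $\|z\|_{\cD({A^*}^\epsilon)}\le 1$; by the duality $\big(L^{s'}(0,\infty;\cD({A^*}^\epsilon))\big)'=L^s(0,\infty;[\cD({A^*}^\epsilon)]')$ this yields $e^{\delta\cdot}e^{A\cdot}Bu\in L^s(0,\infty;[\cD({A^*}^\epsilon)]')$ with $\|e^{\delta\cdot}e^{A\cdot}Bu\|\le c\,\|u\|_U$, i.e.\ \eqref{e:reg-S(t)B}.

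One technical point that deserves care — and which I expect to be the only genuinely delicate step — is the passage through the duality: a priori $e^{At}B\in\cL(U,[\cD(A^*)]')$, so the pairing $\langle e^{At}Bu,z\rangle$ is legitimate only for $z\in\cD(A^*)$, while the identity $B^*e^{A^*t}z=F(t)z+G(t)z$ of Hypothesis~\ref{h:ip2} also holds only on $\cD(A^*)$. The remedy is to run the estimate first for $z\in\cD(A^*)$ (which is dense in $\cD({A^*}^\epsilon)$), obtain the bound in terms of $\|z\|_{\cD({A^*}^\epsilon)}$, and then extend $e^{At}Bu$, a priori valued in $[\cD(A^*)]'$, to an element of the larger space $[\cD({A^*}^\epsilon)]'$ by density and the uniformity of the constant; equivalently one uses the extension $[e^{At}B]^*\in\cL(\cD({A^*}^\epsilon),U)$ recorded in \cite[Lemma~A.2]{abl-2}. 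After this standard density argument the proof is complete.
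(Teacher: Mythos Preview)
Your overall strategy---prove the estimate by duality from the decomposition $B^*e^{A^*t}=F(t)+G(t)$---is the same as the paper's, and your treatment of the $F$-term is correct. However, there is a genuine gap in the duality step for the $G$-term. What you establish is that for each \emph{fixed} $z\in\cD({A^*}^\epsilon)$ the scalar function $t\mapsto e^{\delta t}(u,B^*e^{A^*t}z)_U$ lies in $L^s(0,\infty)$ with norm $\le c\|u\|_U\|z\|_{\cD({A^*}^\epsilon)}$. But the condition
\[
\sup_{\|z\|_{\cD({A^*}^\epsilon)}\le 1}\big\|\langle h(\cdot),z\rangle\big\|_{L^s(0,\infty)}<\infty
\]
does \emph{not} imply $h\in L^s(0,\infty;[\cD({A^*}^\epsilon)]')$: the supremum over $z$ and the $L^s$-norm in $t$ do not commute. (Concretely: with $V=\ell^2$ take $h(t)=e_n$ for $t\in[n,n+1)$; then $\|\langle h(\cdot),z\rangle\|_{L^2}=\|z\|_{\ell^2}$ for every $z$, yet $\|h(t)\|_{V'}=1$ for all $t$, so $h\notin L^2(0,\infty;V')$.) To invoke the Bochner duality $\big(L^{s'}(0,\infty;\cD({A^*}^\epsilon))\big)'=L^s(0,\infty;[\cD({A^*}^\epsilon)]')$ you must test against \emph{time-dependent} $f\in L^{s'}(0,\infty;\cD({A^*}^\epsilon))$, not constant vectors.

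This is exactly what the paper does, and the difference is not cosmetic: once you pair with $f(t)$, the $G$-contribution becomes $\int_0^\infty e^{\delta t}(u,G(t)f(t))_U\,dt$, and Proposition~\ref{p:expG} (which only controls $G(\cdot)z$ for fixed $z$) is no longer the right tool. One needs Proposition~\ref{p:expGf}, which bounds $\|e^{\delta\cdot}G(\cdot)f(\cdot)\|_{L^p(0,\infty;U)}$ in terms of $\|f\|_{L^r(0,\infty;\cD({A^*}^\epsilon))}$ for $r>p/(1-\gamma p)$. The paper pairs $e^{\delta t}e^{At}Bu$ against $f\in L^r(0,\infty;\cD({A^*}^\epsilon))$ with $r\in(1/(1-\gamma),\infty]$, inserts an auxiliary exponential weight $e^{\theta t}$, applies H\"older with exponents $p,p'$ (choosing $p\in[1,1/\gamma)$ so that $p/(1-\gamma p)<r$), and then invokes Proposition~\ref{p:expGf}; this produces the bound against $\|f\|_{L^r}$ and hence, by duality, the membership in $L^{r'}(0,\infty;[\cD({A^*}^\epsilon)]')$ with $r'\in[1,1/\gamma)$. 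Your $F$-term argument survives verbatim; it is only the $G$-term that requires the stronger proposition.
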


\begin{proof}
We use a duality argument.
With $u\in U$ and $f\in L^r(0,\infty;\cD({A^*}^\epsilon))$, $1/(1-\gamma)< r\le \infty$,
we estimate
\begin{eqnarray}
\lefteqn{\hspace{-15mm}
\Big|\int_0^\infty
(e^{\delta t}e^{At}Bu,f(t))_{\cD({A^*}^\epsilon)}\,dt\Big|
=\Big|\int_0^\infty(e^{-\theta t}u,e^{(\delta+\theta)t} B^*e^{A^*t}f(t))_U\,dt\Big|
}
\nonumber \\
&\le & \Big(\int_0^\infty e^{-\theta p't}\, dt\Big)^{1/p'}\,\|u\|_U 
\|e^{(\delta+\theta)\cdot} B^*e^{A^*\cdot}f(\cdot)\|_{L^p(0,\infty;U)}
\label{e:pre-by-duality}\\
& \le & C\,\|u\|_U\, \|f\|_{L^r(0,\infty;\cD({A^*}^\epsilon))}\,,
\label{e:by-duality}
\end{eqnarray}
where $\theta$ is any positive number such that $\delta+\theta<\omega \wedge \eta$, 
$p\in [1,1/\gamma)$ is chosen in order to fulfil the bounds
\begin{equation*}
\frac1{1-\gamma}\le \frac{p}{1-\gamma p}< r
\end{equation*}
($p'$ is its conjugate exponent), and we utilized Proposition~\ref{p:expGf} to go from 
\eqref{e:pre-by-duality} to \eqref{e:by-duality}.
Notice that Proposition~\ref{p:expGf} applies, since the required constraint 
\eqref{e:constraint-r} is satisfied.

Thus, \eqref{e:by-duality} shows that the map $t \mapsto e^{\delta t}e^{At}Bu$ belongs to 
$L^{r'}(0,\infty;[\cD({A^*}^\epsilon)]')$, $r'$ being the conjugate exponent of $r$.
From $r\in (1/(1-\gamma),\infty]$ we get $r'\in [1, 1/\gamma)$, and 
\eqref{e:reg-S(t)B} holds true with $s=r'\in [1,1/\gamma)$, as desired.

\smallskip
Notice carefully that the range of the summability exponent $s$ for the validity 
of \eqref{e:reg-S(t)B} cannot be improved. 
In fact, owing to Proposition~\ref{p:expGf} the exponent $r$ in the obtained estimate 
\eqref{e:by-duality} is subject to the constraint \eqref{e:constraint-r},
which implies 
\begin{equation*}
1\le r'< \frac{p}{p-(1-\gamma p)}\,,
\end{equation*}
while it is readily verified that 
\begin{equation*}
\sup_{1< p< \frac1{\gamma}}\frac{p}{p-(1-\gamma p)}= \frac1{\gamma}\,,
\end{equation*}
This confirms that \eqref{e:reg-S(t)B} holds true if and only if $s\in [1,1/\gamma)$,
thus concluding the proof.
\end{proof}

\smallskip
In order to pinpoint the regularity of $\Phi(t)B$, we will employ the usual
representation of the optimal state in terms of the initial state.
It follows from \eqref{e:introduces-Lambda} that
\begin{equation*} 
\Phi(\cdot)x = (I+LL^*R^*R)^{-1}e^{A\cdot}x\,,
\end{equation*}
which becomes 
\begin{equation} \label{e:phi}
\Phi(\cdot)x = \big(\,I-L\Lambda^{-1}L^*R^*R\,\big)\,e^{A\cdot}x\,,
\end{equation}
where $\Lambda=I+L^*R^*RL$ is boundedly invertible on $L^2(0,\infty;U)$---an elementary consequence of the fact that $\Lambda$ is {\em coercive} on $L^2(0,\infty;U)$.

\begin{remark}
\begin{rm}
We note that the representation of the inverse $(I+LL^*R^*R)^{-1}$ which occurs in 
the formula \eqref{e:phi} is easily derived by a direct (algebraic) computation. 
The inversion of an operator of the form $I+SV$ in a Hilbert space setting is discussed 
in full detail in \cite[Lemma~2A.1, p.~167]{las-trig-encyclopedia}.
\end{rm}
\end{remark}

If we take now $x=Bu$ in \eqref{e:phi} and formally rewrite the corresponding formula,
we obtain the following representation for $\Phi(t)Bu$:
\begin{equation} \label{e:phi-b}
e^{A_Pt}Bu\equiv \Phi(t)Bu = e^{At}Bu - [L\Lambda^{-1}L^*R^*R e^{A\cdot}Bu](t)\,,
\end{equation}
where $\Lambda^{-1}$ is required to make sense on the space $L^q(0,\infty;U)$,
or $L_{\delta}^q(0,\infty;U)$ for some positive $\delta$, rather than on $L^2(0,\infty;U)$.
Indeed, given $u\in U$, by Lemma~\ref{l:reg-S(t)B} we know that 
\begin{equation*}
e^{A\cdot}Bu\in L_{\delta}^1(0,\infty;[\cD({A^*}^\epsilon)]')\,.
\end{equation*} 
Then, owing to Proposition~\ref{p:L*R*R}, the application of the operator $L^*R^*R^*$
yields 
\begin{equation*}
[L^*R^*R^*e^{A\cdot}Bu](t)\in L_{\delta}^q(0,\infty;U)\,,
\end{equation*}
which holds for any $\delta \in [0,\omega\wedge \eta[$.

The question which then arises is the following.
\begin{question}
Is the operator $\Lambda= I+L^*R^*RL$ boundedly invertible on the function space 
$L_{\delta}^q(0,\infty;U)$?
\end{question}
It will become clear in the proof of Theorem~\ref{t:invertible-on-X_q} below
that in contrast with previous theories $\Lambda$ is {\em not} an isomorphism
on $L_{\delta}^q(0,\infty;U)$ (with a fixed $\delta$), as we would expect. 
An intermediate useful result is the one given in the following Lemma.

\begin{lemma} \label{l:invertible-on-L2delta}
The operator $\Lambda = I + L^*R^*RL$ is an isomorphism in the space 
$L_{\delta}^2(0,\infty;U)$, provided that $\delta\in (0, \omega\wedge \eta)$
is sufficiently small.
\end{lemma}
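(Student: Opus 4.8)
The plan is to exploit the fact that $\Lambda=I+L^*R^*RL$ is self-adjoint and coercive on $L^2(0,\infty;U)$, and to show that a small exponential twist does not destroy these properties. First I would fix the weight $\delta>0$ (to be chosen small) and introduce the multiplication operator $E_\delta\colon f\mapsto e^{\delta\cdot}f$, which is an isometric isomorphism from $L^2_\delta(0,\infty;U)$ onto $L^2(0,\infty;U)$. Conjugating $\Lambda$ by $E_\delta$, the claim that $\Lambda$ is an isomorphism on $L^2_\delta$ is equivalent to the claim that the operator
\begin{equation*}
\Lambda_\delta := E_\delta\,\Lambda\,E_\delta^{-1} = I + E_\delta L^*R^*R L\,E_\delta^{-1}
\end{equation*}
is an isomorphism on $L^2(0,\infty;U)$. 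The natural strategy is a perturbation argument: write $\Lambda_\delta = \Lambda + (\Lambda_\delta-\Lambda)$ and show that $\|\Lambda_\delta-\Lambda\|_{\cL(L^2(0,\infty;U))}\to 0$ as $\delta\to 0^+$; since $\Lambda$ is boundedly invertible on $L^2$, a Neumann-series argument then gives invertibility of $\Lambda_\delta$ for $\delta$ small enough.

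Next I would make the perturbation term explicit. Writing out $E_\delta L^* R^* R L E_\delta^{-1}$ in terms of kernels, one has
\begin{equation*}
(E_\delta L^*R^*RLE_\delta^{-1}g)(s)
=\int_0^\infty\!\!\int_0^{\min(s,t)} e^{\delta(s-t)}\,B^*e^{A^*(t-\sigma)}R^*R\,e^{A(s-\sigma)}B\,g(t)\,d\sigma\,dt\,,
\end{equation*}
so the difference $\Lambda_\delta-\Lambda$ is the integral operator with kernel $\big(e^{\delta(s-t)}-1\big)$ times the kernel of $L^*R^*RL$. The key analytic input is that the kernel of $L^*R^*RL$ already has good decay and integrability coming from Hypothesis~\ref{h:ip1} (the exponential stability of $e^{At}$, hence of $e^{A^*t}$) together with the regularity of $B^*e^{A^*t}$ recorded in Section~\ref{s:two}; indeed Propositions~\ref{p:stimeL} and \ref{p:stimeL*}, or more directly Proposition~\ref{p:L*R*R} and its $L^p_\delta$-analogues via Remark~\ref{r:extend-with-exponents}, show that $L$ and $L^*$ remain bounded on suitable $L^p_\delta$-spaces for every $\delta\in[0,\omega\wedge\eta)$. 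In particular $E_\delta L E_\delta^{-1}$ and $E_\delta L^* E_\delta^{-1}$ are bounded on $L^2(0,\infty;U)$ uniformly for $\delta$ in a neighbourhood of $0$, and they converge strongly (indeed in norm, once one inspects the kernels) to $L$ and $L^*$ as $\delta\to 0^+$. Hence $\Lambda_\delta\to\Lambda$ in $\cL(L^2(0,\infty;U))$.

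Combining these two ingredients: choose $\delta_0\in(0,\omega\wedge\eta)$ so small that $\|\Lambda_\delta-\Lambda\|_{\cL(L^2)}\,\|\Lambda^{-1}\|_{\cL(L^2)}<1$ for all $\delta\in(0,\delta_0]$; then $\Lambda_\delta=\Lambda\big(I+\Lambda^{-1}(\Lambda_\delta-\Lambda)\big)$ is invertible with $\Lambda_\delta^{-1}\in\cL(L^2(0,\infty;U))$, and transporting back through $E_\delta$ gives $\Lambda^{-1}=E_\delta^{-1}\Lambda_\delta^{-1}E_\delta\in\cL(L^2_\delta(0,\infty;U))$, which is the assertion.

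The main obstacle I anticipate is the norm convergence $\|\Lambda_\delta-\Lambda\|_{\cL(L^2)}\to 0$: one must be careful that the extra factor $e^{\delta(s-t)}$, which is large when $s\gg t$, is controlled by the decay already present in the kernel of $L^*R^*RL$. This is exactly the point where the strict inequality $\delta<\omega\wedge\eta$ is used — the kernel carries a factor behaving like $e^{-(\omega\wedge\eta)(t-\sigma)}e^{-(\omega\wedge\eta)(s-\sigma)}$ (possibly with an integrable singularity in $t-\sigma$ and $s-\sigma$ of order governed by $\gamma$), and $e^{\delta(s-t)}$ times such a kernel still decays, with the difference from the $\delta=0$ kernel tending to $0$ in the relevant (Schur-type or Hilbert–Schmidt–type) norm as $\delta\to0$. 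A clean way to organize this is to estimate $\|\Lambda_\delta - \Lambda\|$ by a Schur test on the kernel $\big(e^{\delta(s-t)}-1\big)k(s,t)$, using $|e^{\delta(s-t)}-1|\le \delta|s-t|e^{\delta|s-t|}$ and absorbing the polynomial and exponential factors into the spare decay $\omega\wedge\eta-\delta>0$; alternatively one invokes Remark~\ref{r:extend-with-exponents} to get uniform bounds on $E_\delta L E_\delta^{-1}$, $E_\delta L^* E_\delta^{-1}$ and then dominated convergence on the kernel level. Either route makes the perturbation small and closes the argument.
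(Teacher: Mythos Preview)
Your approach is essentially the paper's: conjugate by the multiplication isometry $E_\delta$, recognise $E_\delta L E_\delta^{-1}=L_{A+\delta}$ and $E_\delta L^* E_\delta^{-1}=L^*_{A-\delta}$, and then run a Neumann-series perturbation of the coercive operator $\Lambda$ on $L^2(0,\infty;U)$. The paper implements the smallness estimate by the splitting $L^*_{-\delta}R^*RL_\delta-L^*R^*RL=[L^*_{-\delta}-L^*]R^*RL_\delta+L^*R^*R[L_\delta-L]$ and bounds each piece via the $F/G$ decomposition, arriving at the explicit bound $c\,\delta/(\eta\wedge\omega-\delta)^2$; your Schur-test/dominated-convergence route achieves the same end, though note that the explicit kernel you wrote for $L^*R^*RL$ is off (the inner integral should be $\int_{\max(s,t)}^\infty B^*e^{A^*(\tau-s)}R^*R\,e^{A(\tau-t)}B\,d\tau$, not $\int_0^{\min(s,t)}\cdots\,d\sigma$), which does not affect the argument since you only use that the conjugated kernel differs by the factor $e^{\delta(s-t)}-1$.
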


\begin{proof}
We seek to solve uniquely the equation
\beq\label{e:base}
w + L^*R^*RLw = h,\qquad h\in L_{\delta}^2(0,\infty;U).
\eeq
Let us denote by $H$ the function space $L_{\delta}^2(0,\infty;U)$.
Since $H\subseteq L^2(0,\infty;U)$, there is a unique $w\in L^2(0,\infty;U)$ such that \eqref{e:base} holds. 
Multiplying \eqref{e:base} by $e^{\delta t}$, we get 
\begin{equation*}
e^{\delta t} w + e^{\delta t} L^*R^*RLw = e^{\delta t}h\,,
\end{equation*}
which is equivalent to 
\begin{equation}\label{e:trasf}
e^{\delta t} w + L^*_{A-\delta}R^*RL_{A+\delta} (e^{\delta t}w)= e^{\delta t}h\,,
\end{equation}
where we denoted by $L_{A+\delta}$ the input-to-state map for the control system
$y'=(A+\delta)y+Bv$, $y(0)=0$, namely 
\begin{equation*}
L_{A+\delta}v(t) := \int_0^t e^{(A+\delta)(t-s)}Bv(s)\,ds\,;
\end{equation*}
$L^*_{A-\delta}$ is defined accordingly.
Thus, in order to simplify the notation, let us rewrite \eqref{e:trasf} as follows:
\beq\label{e:trasf-prime}
e^{\delta t} w + L^*_{-\delta}R^*RL_{\delta} (e^{\delta t}w)= e^{\delta t}h\in L^2(0,\infty;U)\,.
\eeq
We now utilize the estimate
\begin{equation} \label{e:smallness}
\|L_{-\delta}^*R^*RL_\delta - L^*R^*RL\|_{\cL(L^2(0,\infty;U))} 
\le c\,\frac{\delta}{(\eta\wedge \omega - \delta)^2}\,,
\end{equation}
which will be established in Lemma~\ref{l:smallness} below.
The above implies that $I+L^*_{-\delta}R^*RL_\delta$ is also invertible in $L^2(0,\infty;U)$, with continuous inverse, for sufficiently small $\delta$.
Hence, the equation 
\begin{equation*}
z + L^*_{-\delta}R^*RL_\delta z= e^{\delta t}h
\end{equation*}
has a unique solution $z\in L^2(0,\infty; U)$.
Observe now that the function $e^{-\delta t}z$ belongs to $H\subset L^2(0,\infty;U)$ and satisfies 
\begin{equation} 
e^{-\delta t}z + L^*R^*RL (e^{-\delta t}z)= h\,.
\end{equation}
Comparing \eqref{e:base} with the above gives, by uniqueness, $e^{-\delta t}z\equiv w$, 
so that $w\in H$. 
This shows that \eqref{e:base} is uniquely solvable in $H$.
The proof is completed once we establish the estimate \eqref{e:smallness}.
This is accomplished in the following Lemma.
\end{proof}

\begin{lemma} \label{l:smallness}
If $\delta \in \,]0,\eta\wedge \omega[\,$, the estimate \eqref{e:smallness} holds true.
\end{lemma}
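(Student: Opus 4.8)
The plan is to estimate the difference $L_{-\delta}^*R^*RL_\delta - L^*R^*RL$ by writing it as a telescoping sum and controlling each piece by the difference of the perturbed and unperturbed semigroups. First I would record the elementary resolvent-type identity
\begin{equation*}
e^{(A+\delta)t} - e^{At} = \int_0^t e^{(A+\delta)(t-s)}\,\delta\, e^{As}\,ds
= \delta \int_0^t e^{(A+\delta)s} e^{A(t-s)}\,ds\,,
\end{equation*}
which, combined with the uniform bounds $\|e^{At}\|_{\cL(Y)}\le M e^{-\omega t}$ and $\|e^{(A\pm\delta)t}\|_{\cL(Y)}\le M e^{-(\omega\mp\delta)t}$, gives a pointwise estimate of order $\delta\, t\, e^{-(\omega-\delta)t}$ for the semigroup difference, or more conveniently an $\cL(Y)$-norm bound of the Laplace-transform type: the operator $v\mapsto \int_0^\infty [e^{(A+\delta)t}-e^{At}]v\,dt$ has norm $O(\delta/(\omega-\delta)^2)$. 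The point is that each extra factor of $(\omega-\delta)^{-1}$ comes from an integration in time, and here two such integrations occur (one from the difference identity, one from the outer time integral defining $L$ or $L^*$), which explains the power $2$ in the denominator of \eqref{e:smallness}.

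Next I would decompose
\begin{equation*}
L_{-\delta}^*R^*RL_\delta - L^*R^*RL
= (L_{-\delta}^*-L^*)R^*RL_\delta + L^*R^*R(L_\delta - L)\,,
\end{equation*}
so it suffices to bound $\|L_\delta - L\|_{\cL(L^2(0,\infty;U),\,L^2(0,\infty;Y))}$ and, dually, $\|L_{-\delta}^*-L^*\|$ by $c\,\delta/(\eta\wedge\omega-\delta)^2$, using that $L_\delta$, $L^*_{-\delta}$, $R^*R$ are bounded uniformly for $\delta$ in a fixed small interval (the uniform boundedness of $L_\delta$ on $L^2$ follows from the exponential-weight versions of Proposition~\ref{p:stimeL}, cf.\ Remark~\ref{r:extend-with-exponents}, applied to the semigroup $e^{(A+\delta)t}$, which is still exponentially stable with rate $\omega-\delta$, together with the decompositions of $B^*e^{(A^*\mp\delta)t}$ inherited from $F,G$ whose singular/exponential estimates \eqref{e:keyasF} survive with $\eta$ replaced by $\eta-\delta$). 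For the difference $L_\delta - L$ I would write, for $v\in L^2(0,\infty;U)$,
\begin{equation*}
(L_\delta - L)v(t) = \int_0^t \big(e^{(A+\delta)(t-s)}-e^{A(t-s)}\big)Bv(s)\,ds\,,
\end{equation*}
insert the difference identity above, and then split $B$ through the decomposition $e^{A\sigma}B = F(\sigma)^* + G(\sigma)^*$ so that the singularity in $\sigma\to 0^+$ carried by $F$ is integrable (order $\sigma^{-\gamma}$, $\gamma<1$) and the resulting kernel estimates are of convolution type; a Young/Hausdorff--Young inequality in $L^2$ then produces exactly one factor $(\eta\wedge\omega-\delta)^{-1}$ from the time integral in the difference identity and another from the convolution, with an overall factor $\delta$ in front.

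The main obstacle I anticipate is bookkeeping the singular component near $t=0$: the kernel $e^{A\sigma}B$ is only in $[\cD(A^*)]'$ and blows up like $\sigma^{-\gamma}$, so one must be careful that inserting the extra integration $\int_0^t e^{(A+\delta)\tau}(\cdot)\,d\tau$ does not destroy the $L^2(U)\to L^2(Y)$ mapping property, and that the constant $c$ in \eqref{e:smallness} stays uniform as $\delta\downarrow 0$. This is handled by keeping the singular part $F$ and the regular part $G$ separate throughout: for the $F$-part one uses \eqref{e:keyasF} to get an explicitly integrable convolution kernel $\sigma^{-\gamma}e^{-(\eta-\delta)\sigma}$ and estimates it in $L^1$ (so its $L^1$ norm is $O((\eta-\delta)^{\gamma-1})$, bounded for small $\delta$), while for the $G$-part one uses Proposition~\ref{p:expG}, i.e.\ $e^{\delta\cdot}G(\cdot)\in\cL(Y,L^2(0,\infty;U))$ with norm bounded uniformly in small $\delta$. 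Combining the two contributions and the dual estimate for $L_{-\delta}^*-L^*$ (obtained by the same argument applied to $B^*e^{A^*t}$, or simply by taking adjoints and noting $\|L_{-\delta}^*-L^*\| = \|L_{-\delta}-L\|$ up to sign conventions) yields \eqref{e:smallness} with a constant depending only on $M,N,\gamma,\eta\wedge\omega$ and the $\cL$-norms appearing in Hypothesis~\ref{h:ip2}, which completes the proof.
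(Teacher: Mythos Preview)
Your proposal is correct and follows essentially the same route as the paper: the same telescoping decomposition $L_{-\delta}^*R^*RL_\delta - L^*R^*RL = (L_{-\delta}^*-L^*)R^*RL_\delta + L^*R^*R(L_\delta-L)$, the same $F/G$ splitting of the kernel, and convolution/duality estimates to produce the bound $c\,\delta/(\eta\wedge\omega-\delta)^2$. One simplification you are overlooking: since the perturbation is by a \emph{scalar}, you have $e^{(A+\delta)t}=e^{\delta t}e^{At}$ outright, so your Duhamel identity collapses to the scalar factor $e^{\delta t}-1$; the paper uses this directly, writes $[L_\delta-L]u(t)=\int_0^t e^{\delta(t-s)}e^{A(t-s)}Bu(s)[1-e^{-\delta(t-s)}]\,ds$, and bounds $|1-e^{-\delta\sigma}|\le\delta\sigma$, which spares you the bookkeeping of threading an extra time integral through the singular $F$-kernel and the dual $G$-estimate that you flag as the ``main obstacle.''
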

\begin{proof}
We have
$$L^*_{-\delta}R^*RL_\delta - L^*R^*RL = [L^*_{-\delta}-L^*]R^*RL_\delta 
+ L^*R^*R[L_\delta-L].$$
Consider now $L_\delta-L$. It holds for each $u\in L^2(0,\infty;U)$
\beyy
[L_\delta-L]y(t) & = & \int_0^t e^{A(t-s)}Bu(s) [e^{\delta(t-s)}-1]ds = \\
& = & \int_0^t e^{\delta(t-s)} e^{A(t-s)}Bu(s) [1-e^{-\delta(t-s)}]ds,
\eeyy
so that we can split
\beyy
[L_\delta-L]y(t) & = & \int_0^t e^{\delta(t-s)}F(t-s)^*u(s)[1-e^{-\delta(t-s)}]ds + \\
& + & \int_0^t e^{\delta(t-s)}G(t-s)^*u(s)[1-e^{-\delta(t-s)}]ds.
\eeyy
The first term can be estimated by
\beyy
\lefteqn{\left\|\int_0^t e^{\delta(t-s)}F(t-s)^*u(s)[1-e^{-\delta(t-s)}]ds\right\|_Y \le}\\
& & \le c\delta \int_0^t (t-s)^{1-\gamma}e^{-(\eta-\delta)(t-s)} \|u(s)\|_U \,ds,
\eeyy 
and it is straightforward to deduce that
\beyy
\lefteqn{\left[\int_0^\infty \left\|\int_0^t e^{\delta(t-s)}F(t-s)^*u(s)[1-e^{-\delta(t-s)}]ds\right\|_Y^2 dt\right]^{\frac1{p}} \le}\\
& & \le c\delta \int_0^t \sigma^{1-\gamma}e^{-(\eta-\delta)\sigma} d\sigma \|u\|_{L^2(0,\infty;U)}\le C \frac{\delta}{(\eta-\delta)^{2-\gamma}}\|u\|_{L^2(0,\infty;U)}.
\eeyy
The second term is estimated as follows: fix $\psi \in L^2(0,\infty;U)$ and set $\alpha= \frac{\delta + \eta \wedge \omega}{2}$, $\beta= \frac{3\delta + \eta \wedge \omega}{4}= \frac{\delta + \alpha}{2}$. Then we have
\beyy
\lefteqn{\int_0^\infty\left( \int_0^t e^{\delta(t-s)}G(t-s)^*u(s)[1-e^{-\delta(t-s)}]ds, \psi(t)\right)_Y \,dt =}\\
& & = \int_0^\infty\left(\int_0^t u(s), e^{\delta(t-s)}[1-e^{-\delta(t-s)}]G(t-s)\psi(t)\right)_Y \,dsdt = \\
& & = \int_0^\infty \hspace{-2mm}\int_0^t e^{-(\beta-\delta)(t-s)}\|u(s)\|_U \,\|e^{\beta(t-s)}[1-e^{-\delta(t-s)}]\|G(t-s)\psi(t)\|_U \,dsdt \le \\
& & \le c\delta \int_0^\infty \|e^{-(\beta-\delta)(t-\cdot)}u\|_{L^2(0,t;U)}\|(t-\cdot)\,e^{\beta(t-\cdot)}G(t-\cdot)\psi(t)\|_{L^2(0,t;U)} dt \le \\
& & \le c\frac{2\delta}{\eta\wedge \omega-\delta} \int_0^\infty \|e^{-(\beta-\delta)(t-\cdot)}u\|_{L^2(0,t;U)}\|e^{\alpha \cdot}G(\cdot)\psi(t)\|_{L^2(0,t;U)} dt \le\\
& & \le c\frac{2\delta}{\eta\wedge \omega-\delta}\int_0^\infty \|e^{-(\beta-\delta)(t-\cdot)}u\|_{L^2(0,t;U)}\|\psi(t)\|_Y \, dt.
\eeyy
From here it is a standard matter to deduce that
\beyy
\lefteqn{\int_0^\infty\left( \int_0^t e^{\delta(t-s)}G(t-s)^*u(s)[1-e^{-\delta(t-s)}]ds, \psi(t)\right)_Y \,dt \le}\\
& & \le C\frac{\delta}{(\eta\wedge \omega-\delta)^2}\|u\|_{L^2(0,\infty;U)}\|\psi\|_{L^2(0,\infty;U)}\,.
\eeyy
This shows that 
\beyy
\lefteqn{\left\|\int_0^t e^{\delta(t-s)}G(t-s)^*u(s)[1-e^{-\delta(t-s)}]ds\right\|_{L^2(0,\infty;Y)}\le} 
\\
& & \le C\frac{\delta}{(\eta\wedge \omega-\delta)^2}\|u\|_{L^2(0,\infty;U)} \,,
\eeyy
and summing up we obtain
\beq\label{stimaL}
\|[L_\delta-L]u\|_{L^2(0,\infty;Y)} \le C\frac{\delta}{(\eta\wedge \omega-\delta)^2}\|u\|_{L^2(0,\infty;U)} \,.
\eeq
Next, consider $L^*_{-\delta} -L^*$. It holds for each $y\in L^2(0,\infty;Y)$
\beyy
\lefteqn{[L^*_{-\delta} -L^*]y(t) = \int_t^\infty B^* e^{A^*(\tau -t)} [e^{-\delta(\tau-t)} -1] y(\tau)d\tau =}\\
& & =\int_t^\infty \hspace{-2mm}F(\tau-t)[e^{-\delta(\tau-t)} -1] y(\tau)d\tau + \int_t^\infty\hspace{-2mm} G(\tau-t)[e^{-\delta(\tau-t)} -1] y(\tau)d\tau.
\eeyy
From here, proceeding quite similarly to the preceding case, we get
\beq\label{stimaL*}
\|[L^*_{-\delta}-L^*]y\|_{L^2(0,\infty;U)} \le C\frac{\delta}{(\eta\wedge \omega-\delta)^2}\|y\|_{L^2(0,\infty;y)} \,.
\eeq
Finally, we can write
$$[L^*_{-\delta}R^*RL_\delta - L^*R^*RL]u = [L^*_{-\delta}-L^*]R^*RL_\delta u + L^*R^*R[L_\delta-L]u$$
and both terms can be easily estimated by \eqref{stimaL} and \eqref{stimaL*}. The result follows. 
\end{proof}

\medskip
We now utilize Lemma~\ref{l:invertible-on-L2delta} to show that the operator $\Lambda$
admits a bounded inverse $\Lambda^{-1}$ which maps $L^q_\delta(0,\infty;U)$ onto
$L^q_{\delta-\sigma_0}(0,\infty;U)$, for a suitable $\sigma_0\in (0,\delta)$.

\begin{theorem} \label{t:invertible-on-X_q}
The operator $\Lambda = I + L^*R^*RL$ admits a bounded inverse 
\begin{equation*}
\Lambda^{-1}: L^q_\delta(0,\infty;U) \longrightarrow L^q_{\delta-\sigma_0}(0,\infty;U)\,,
\end{equation*}
with appropriate $\sigma_0\in (0,\delta)$.
\end{theorem}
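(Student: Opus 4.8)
The plan is to deduce the claim from the weighted $L^2$-invertibility already in hand (Lemma~\ref{l:invertible-on-L2delta}), routing the reduction through the regularizing power of $K:=L^*R^*RL$ and through one elementary embedding. That embedding is: for exponents $1\le s\le r\le\infty$ and weights $0<\delta_2<\delta_1$,
\[
L^r_{\delta_1}(0,\infty;U)\hookrightarrow L^s_{\delta_2}(0,\infty;U)\quad\text{continuously}
\]
(H\"older's inequality on $(0,\infty)$: the local inclusion $L^r(0,T)\subset L^s(0,T)$ takes care of the compact part, the decaying factor $e^{(\delta_2-\delta_1)\cdot}$ of the tail). The \emph{reverse} embedding fails --- $L^p$-spaces on a half-line are mutually incomparable --- and this is exactly why $\Lambda$ cannot be an isomorphism on a fixed $L^q_\delta(0,\infty;U)$: a generic $h\in L^q_\delta$ with $q<2$ cannot be realised in any $L^2$-type space and hence cannot be handed directly to Lemma~\ref{l:invertible-on-L2delta}.

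The device that gets around this is that $K$ is \emph{smoothing in the integrability scale}. Running, in weighted spaces, the bootstrap already carried out for Statement~S1 of Theorem~\ref{t:main} --- based on Propositions~\ref{p:stimeL}(ii),(v) and \ref{p:stimeL*}(ii),(iv) transported to $L^p_{\delta_*}$ via Remark~\ref{r:extend-with-exponents}, and on the gain $L:L^r\to C_b([0,\infty);\cD(A^\epsilon))$ together with $R^*R\in\cL(\cD(A^\epsilon),\cD({A^*}^\epsilon))$ --- but starting from the exponent $q\in(1,2)$ rather than $2$ (the intermediate exponents strictly increase because $\gamma>1/2$, and after a fixed finite number of steps the chain reaches $L^\infty$), one obtains an integer $m\in\N$ with
\[
K^m\in\cL\big(L^q_{\delta_*}(0,\infty;U),\,L^\infty_{\delta_*}(0,\infty;U)\big)
\]
for every $\delta_*\in(0,\omega\wedge\eta)$, with no loss of weight; composing with the embedding above, $K^m$ maps $L^q_{\delta_*}(0,\infty;U)$ boundedly into $L^2_{\delta''}(0,\infty;U)$ for any $\delta''<\delta_*$.

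Given $h\in L^q_\delta(0,\infty;U)$ I would then put $\delta_*:=\delta\wedge\bar\delta$, where $\bar\delta$ is the smallness threshold of Lemma~\ref{l:invertible-on-L2delta}, so that $h\in L^q_{\delta_*}$, fix $0<\delta'''<\delta''<\delta_*$, and define
\[
w:=\sum_{j=0}^{m-1}(-1)^j K^j h\;+\;(-1)^m(I+K)^{-1}K^m h ,
\]
with $(I+K)^{-1}$ in the last term the inverse on $L^2_{\delta''}$ given by Lemma~\ref{l:invertible-on-L2delta} (legitimate, since $K^m h\in L^2_{\delta''}$ and $\delta''<\bar\delta$). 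A telescoping computation, using $(-1)^{m-1}+(-1)^m=0$, gives $(I+K)w=h$; the resolvent identity $(I+K)^{-1}K=I-(I+K)^{-1}$ shows that $w$ does not depend on the (large enough) $m$, so $h\mapsto w$ is a well-defined linear map. Tracking the terms: each $K^j h$ with $0\le j<m$ lies in a weighted $L^r_{\delta_*}$-space with $q\le r\le\infty$, hence in $L^q_{\delta'''}(0,\infty;U)$ by the embedding above; and $(I+K)^{-1}K^m h\in L^2_{\delta''}\hookrightarrow L^q_{\delta'''}$. Therefore $w\in L^q_{\delta'''}(0,\infty;U)$ with $\|w\|_{L^q_{\delta'''}}\le C\|h\|_{L^q_\delta}$, the constant $C$ involving only $m$, the fixed embedding constants, $\|(I+K)^{-1}\|$ on $L^2_{\delta''}$, and the weighted norms of $L$ and $L^*$. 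Taking $\sigma_0:=\delta-\delta'''\in(0,\delta)$ yields the asserted bounded operator $\Lambda^{-1}:L^q_\delta(0,\infty;U)\to L^q_{\delta-\sigma_0}(0,\infty;U)$, with $\Lambda\Lambda^{-1}=\mathrm{id}$ on $L^q_\delta$.

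The main obstacle is the one already flagged: since $q<2$ there is no honest way to place $h\in L^q_\delta$ inside an $L^2$-space, so Lemma~\ref{l:invertible-on-L2delta} is never applied to $h$ itself but only to $K^m h$, after the regularizing operator $K$ has been used a fixed number of times to climb --- at the price of an arbitrarily small loss of exponential weight --- all the way up to $L^\infty$. The remaining work is bookkeeping with the nested weights $\delta>\delta_*>\delta''>\delta'''$, keeping every loss strictly positive and small and respecting the threshold $\bar\delta$; beyond that, everything reduces to H\"older's inequality and the weighted forms of Propositions~\ref{p:stimeL} and \ref{p:stimeL*}.
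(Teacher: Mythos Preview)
Your proposal is correct and follows essentially the same route as the paper: smooth $h$ by a finite power of $K=L^*R^*RL$ until Lemma~\ref{l:invertible-on-L2delta} applies, build the solution as the truncated Neumann sum plus the $L^2_{\delta''}$-remainder, and collect everything in $L^q$ at the cost of a small weight loss via the H\"older embedding $L^r_{\delta_1}\hookrightarrow L^s_{\delta_2}$ for $s\le r$, $\delta_2<\delta_1$. The only cosmetic difference is that you push the bootstrap all the way to $L^\infty_{\delta_*}$ before descending to $L^2_{\delta''}$, whereas the paper stops the smoothing as soon as $L^2_\delta$ is reached; this overshoot is harmless but unnecessary. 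Two minor remarks: your parenthetical ``the intermediate exponents strictly increase because $\gamma>1/2$'' is not quite right---the increments $p_{n+1}-p_n=p_n^2(1-\gamma)/(1-(1-\gamma)p_n)$ are strictly positive and bounded below for any $\gamma\in(0,1)$ once $p_0=q>1$, so finite termination does not need $\gamma>1/2$---and your careful handling of the threshold $\bar\delta$ via $\delta_*=\delta\wedge\bar\delta$ is actually cleaner than the paper's ``possibly choosing $\delta$ sufficiently small''.
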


\begin{proof}
We seek to solve uniquely the equation
\begin{equation} \label{e:goal}
g + L^*R^*RLg = h\,,
\end{equation}
where $h\in L^q_\delta(0,\infty;U)$, with arbitrary $\delta\in (0,\omega\wedge\eta)$.
We follow an idea which has been employed in the study of the LQ-problem for parabolic-like dynamics; see, e.g., \cite[Vol.~I,~Theorem~1.4.4.4,~p.~40]{las-trig-encyclopedia}.
Since $q<2$, using the action of both the operators $L$ and $L^*$, whose mapping 
increase the (time regularity) summability exponents, it is readily seen that there exists an integer $n_0\ge 1$ such that $(L^*R^*RL)^{n_0}h\in L_{\delta}^2(0,\infty;U)$.
Thus, we introduce the auxiliary equation 
\begin{equation} \label{e:auxiliary}
v + L^*R^*RLv = (L^*R^*RL)^{n_0}h\in L_{\delta}^2(0,\infty;U)\,.
\end{equation}
Owing to Lemma~\ref{l:invertible-on-L2delta}, possibly choosing $\delta$ sufficiently small, \eqref{e:auxiliary} is uniquely solvable, yielding $v\in L_{\delta}^2(0,\infty;U)$.
Using once again that $q<2$, it is easily verified that the obtained $v$ belongs to 
$L_{\theta}^q(0,\infty;U)$ for any $\theta<\delta$. 
In fact,
\begin{align}
\int_0^\infty e^{\theta qt} \|v(t)\|_U^q\, dt 
&=\int_0^\infty e^{-(\delta-\theta)qt}\,\|e^{\delta t} v(t)\|_U^q\, dt
\label{e:elementary-split}\\
& \le \Big(\int_0^\infty e^{-[2(\delta-\theta)q/(2-q)]t}\,dt\Big)^{(2-q)/2} 
\Big(\int_0^\infty \|e^{\delta t} v(t)\|_U^2\, dt\Big)^{q/2}\,,
\nonumber
\end{align}
and there exists a constant $C$ such that 
\begin{equation*} 
\|v\|_{L_{\theta}^q(0,\infty;U)}\le C\,\|v\|_{L_{\delta}^2(0,\infty;U)}\,.
\end{equation*}
The function $v$ resulting from \eqref{e:auxiliary} and the given $h\in W$---for which the 
smaller $\theta$ still guarantees $e^{\theta\cdot}h\in L^q(0,\infty;U)$---will eventually 
produce the soughtafter solution $g$ of equation \eqref{e:goal}, 
according to the following definition:
$$g=\sum_{j=0}^{n_0-1}(-L^*R^*RL)^j h + v.$$
Indeed we have:
\beyy
\lefteqn{(I + L^*R^*RL)g =}\\
& & =\sum_{j=0}^{n_0-1}(-L^*R^*RL)^j h + \sum_{j=0}^{n_0-1}L^*R^*RL(-L^*R^*RL)^j h + v + L^*R^*RLv = \\
& & =\sum_{j=0}^{n_0-1}(-L^*R^*RL)^j h - \sum_{j=0}^{n_0-1}(-L^*R^*RL)^{j+1} h + (-L^*R^*RL)^{n_0}h = \\
& & = \sum_{j=0}^{n_0-1}(-L^*R^*RL)^j h - \sum_{i=1}^{n_0}(-L^*R^*RL)^i h + (-L^*R^*RL)^{n_0}h = \\
& & = h -(-L^*R^*RL)^{n_0}h + (-L^*R^*RL)^{n_0}h = h\,,
\eeyy 
which concludes the proof.
\end{proof}

\smallskip
We are finally able to show that the operator $e^{A_Pt}B$ substantially `inherits' the regularity 
of $e^{At}B$, except for a constraint on the exponent of the allowed exponential weights.

\begin{proposition}[Statement~S6.~of Theorem~\ref{t:main}] \label{p:heritage}
For $t\ge 0$, the linear operator $e^{A_Pt}B$ is well defined as an operator from 
$U$ into $\cD({A^*}^\epsilon)]'$ and, in fact, provided $\delta \in (0,\omega\wedge \eta)$ is sufficiently small, we have 
\begin{equation} \label{e:inherited}
e^{\delta \cdot}e^{A_P\cdot}B\in \cL(U,L^p(0,\infty;[\cD({A^*}^\epsilon)]'))
\qquad \forall p\in [1,\frac1{\gamma})\,. 
\end{equation}
\end{proposition}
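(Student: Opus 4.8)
The plan is to start from the representation formula \eqref{e:phi-b}, which at this point of the paper is rigorous,
\[
e^{A_Pt}Bu = e^{At}Bu - \big[L\Lambda^{-1}L^*R^*R\,e^{A\cdot}Bu\big](t)\,,
\]
and to show that, after insertion of an exponential weight $e^{\delta t}$ with $\delta>0$ small, each of the two summands defines a function in $L^p(0,\infty;[\cD({A^*}^\epsilon)]')$ for every $p\in[1,1/\gamma)$. For the first summand there is nothing to prove: Lemma~\ref{l:reg-S(t)B} already gives $e^{\delta\cdot}e^{A\cdot}Bu\in L^s(0,\infty;[\cD({A^*}^\epsilon)]')$ for every $s\in[1,1/\gamma)$ and every $\delta\in[0,\omega\wedge\eta)$, hence in particular for $s=p$ and for $s=1$; the case $s=1$ is the one that feeds the second summand. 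It is also worth recalling at the outset that \eqref{e:phi-b} is meaningful exactly because each operator in the chain $e^{A\cdot}B\mapsto L^*R^*R(\cdot)\mapsto\Lambda^{-1}(\cdot)\mapsto L(\cdot)$ has, by the cited results, a well-defined action on the relevant weighted space, so that $t\mapsto e^{A_Pt}Bu$ is unambiguously an $[\cD({A^*}^\epsilon)]'$-valued function.

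For the second summand I would chase the regularity through the chain of operators, tracking the weight at each step. Fix $\delta''\in(0,\omega\wedge\eta)$, small enough that Theorem~\ref{t:invertible-on-X_q} is applicable at that level, and let $\delta'<\delta''$, $\delta:=\delta'-\kappa$ with $\kappa>0$ to be chosen small. By Lemma~\ref{l:reg-S(t)B} (with $s=1$), $e^{A\cdot}Bu\in L^1_{\delta''}(0,\infty;[\cD({A^*}^\epsilon)]')$; by \eqref{e:crucial} of Proposition~\ref{p:L*R*R}, $L^*R^*R\,e^{A\cdot}Bu\in L^q_{\delta''}(0,\infty;U)$; by Theorem~\ref{t:invertible-on-X_q}, $g:=\Lambda^{-1}L^*R^*R\,e^{A\cdot}Bu\in L^q_{\delta'}(0,\infty;U)$. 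Finally one applies $L$. Here the only genuine point is the integrability gain of $L$: one checks the elementary inequality $\frac{q}{1-(1-\gamma)q}\ge\frac1\gamma$ for every $q\ge 1$ with $q<\frac1{1-\gamma}$ (equivalent to $q\ge 1$), while for $q\ge\frac1{1-\gamma}$ the operator $L$ maps into all finite exponents; in either case Proposition~\ref{p:stimeL} together with Remark~\ref{r:extend-with-exponents} yields $Lg\in L^r_{\delta'}(0,\infty;Y)$ for every $r$ up to a threshold $\ge 1/\gamma$, with the weight preserved. Composing with the continuous embedding $Y\hookrightarrow[\cD({A^*}^\epsilon)]'$, choosing $r=p$ when $q\le p<1/\gamma$ and, when $p<q$, choosing $r=q$ and lowering the exponent to $p$ by H\"older at the expense of the weight drop $\kappa$, one gets $Lg\in L^p_{\delta}(0,\infty;[\cD({A^*}^\epsilon)]')$ for every $p\in[1,1/\gamma)$, with a single $\delta$. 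Since $\delta<\delta''$, the first summand also lies in $L^p_{\delta}(0,\infty;[\cD({A^*}^\epsilon)]')$, and \eqref{e:inherited} follows.

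The main obstacle is not any individual estimate but the simultaneous bookkeeping of the two scales of exponents. On the integrability side one must push $p$ up to $1/\gamma$ using the smoothing of $L$, which forces the case distinction $q<\frac1{1-\gamma}$ versus $q\ge\frac1{1-\gamma}$ and the verification of the inequality above; on the weight side one loses an arbitrarily small amount at the inversion step (Theorem~\ref{t:invertible-on-X_q}) and, possibly, once more when dropping from $L^q$ to $L^p$ with $p<q$. Because there are only finitely many such losses and each can be made arbitrarily small, they are absorbed by the single requirement that the final $\delta$ be sufficiently small ($\delta<\delta'<\delta''<\omega\wedge\eta$, with $\delta''$ admissible in Theorem~\ref{t:invertible-on-X_q}) — which is precisely the hypothesis in the statement.
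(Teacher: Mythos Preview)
Your proof is correct and follows essentially the same approach as the paper: both start from the representation \eqref{e:phi-b}, handle the first summand by Lemma~\ref{l:reg-S(t)B}, and trace the second summand through the chain $e^{A\cdot}B\to L^*R^*R\to\Lambda^{-1}\to L$ using Proposition~\ref{p:L*R*R}, Theorem~\ref{t:invertible-on-X_q}, and Proposition~\ref{p:stimeL}. The only difference is in the final application of $L$: the paper first drops $g\in L^q_{\theta}$ to $L^1_{\beta}$ (one H\"older, one weight loss) and then invokes Proposition~\ref{p:stimeL}(i), which directly yields $Lg\in L^r_\beta(0,\infty;[\cD({A^*}^\epsilon)]')$ for every $r\in[1,1/\gamma)$ without any case distinction; your route through parts (ii)--(iv), the embedding $Y\hookrightarrow[\cD({A^*}^\epsilon)]'$, and the separate treatment of $p<q$ is valid but slightly more laborious.
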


\begin{proof}
Let $u\in U$ be given.
We return to the representation \eqref{e:phi-b} for $e^{A_Pt}Bu$, and focus on its second summand.
Starting from $e^{At}Bu$, whose regularity is established in Lemma~\ref{l:reg-S(t)B},  
we utilize \eqref{e:crucial} of Proposition~\ref{p:L*R*R} first, and invoke
Theorem~\ref{t:invertible-on-X_q} next, thus obtaining---possibly choosing 
$\theta<\omega\wedge \eta$ sufficiently small---,
\begin{equation*}
\Lambda^{-1}L^*R^*R e^{A\cdot}Bu\in L_{\theta}^q(0,\infty;U)\,.
\end{equation*}
In particular, there exists $\beta<\theta$ such that 
\begin{equation*}
\Lambda^{-1}L^*R^*R e^{A\cdot}Bu\in L_{\beta}^1(0,\infty;U)\,;
\end{equation*}
consequently, Proposition~\ref{p:stimeL}(i) implies 
\begin{equation}  \label{e:pezzo}
L\Lambda^{-1}L^*R^*R e^{A\cdot}Bu\in L_{\beta}^r(0,\infty;[\cD({A^*}^\epsilon)]')
\quad \forall r\in [1,\frac1{\gamma})\,.
\end{equation}
Actually, the range of the summability exponent in \eqref{e:pezzo} is larger; namely, 
the membership in \eqref{e:pezzo} holds true for all $r$ in a suitable maximal interval 
$I\supset [1,1/\gamma)$, which is determined by the reciprocal relation between 
$1/(1-\gamma)$ and $q$; see the statements (ii)--(iv) of Proposition~\ref{p:stimeL}. 
Even an `improved version' of the regularity in \eqref{e:pezzo}, combined with the one 
in \eqref{e:reg-S(t)B}, yields anyhow
\begin{equation*}
e^{A_P\cdot}Bu=e^{A\cdot}Bu-[L\Lambda^{-1}L^*R^*R e^{A\cdot}Bu](\cdot)
\in L_{\beta}^p(0,\infty;[\cD({A^*}^\epsilon)]')
\quad \forall p\in [1,\frac1{\gamma})\,,
\end{equation*} 
for suitably small $\beta\in (0,\omega\wedge \eta)$, confirming \eqref{e:inherited}.
\end{proof}

\smallskip
The power of the (apparently weak) regularity result provided by Proposition~\ref{p:heritage} is enlightened in the following Corollary. 

\begin{corollary}\label{c:differentiability-on-domain_A}
The optimal state semigroup $\Phi(t)=e^{A_Pt}$ is strongly differentiable on $\cD(A)$: 
more precisely, if $x\in \cD(A)$ the map $t\mapsto \Phi(t)x=e^{A_Pt}x$ is strongly 
differentiable for almost any $t>0$, with 
\begin{equation} \label{e:formula-for-derivative}
\frac{d}{dt}e^{A_Pt}x=e^{A_Pt}Ax-e^{A_Pt}B\,B^*Px\,, 
\quad \textrm{for $x\in \cD(A)$ and a.e.~$t>0$,}
\end{equation}
and the equality holds true on $[\cD({A^*}^\epsilon)]'$.
In particular, the operator 
\begin{equation} \label{e:regularity-of-derivative}
\frac{d}{dt}e^{A_Pt}\; \textrm{is continuous:} \quad
\cD(A) \longrightarrow L_{\delta}^p(0,\infty;[\cD({A^*}^\epsilon)]')
\qquad \forall p\in [1,\frac1{\gamma})
\end{equation}
provided $\delta \in (0,\omega\wedge \eta\wedge \omega_1)$ is sufficiently small,
and the following estimate holds true almost everywhere in $(0,\infty)$:
\begin{equation} \label{e:corresponding-estimate}
\begin{aligned} 
\Big\|A^{-\epsilon}\frac{d}{dt}e^{A_Pt}\Big\|_Y 
& \le M_1 e^{-\omega_1t}\|x\|_{\cD(A)} 
\\[1mm]
& \quad + \big\|A^{-\epsilon}e^{A_Pt}B\big\|_{\cL(U,Y)}\,
\|B^*P\|_{\cL(\cD(A^\epsilon),U)}\,\|x\|_{\cD(A^\epsilon)}\,.
\end{aligned}
\end{equation}

\end{corollary}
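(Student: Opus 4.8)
The plan is to deduce Corollary~\ref{c:differentiability-on-domain_A} from Proposition~\ref{p:heritage} together with the feedback representation \eqref{e:feedback} and the description \eqref{e:dom-a_p-described} of $A_P$ on $\cD(A_P)$, extended to $\cD(A)$ via a density/approximation argument. First I would fix $x\in\cD(A)$ and recall the integral identity satisfied by the optimal state: from \eqref{e:introduces-Lambda} and the feedback formula we have, for $t>0$,
\begin{equation*}
\Phi(t)x = e^{At}x - \int_0^t e^{A(t-\tau)}B\,B^*P\Phi(\tau)x\,d\tau\,,
\end{equation*}
which (at the $[\cD({A^*}^\epsilon)]'$ level, using $A^{-1}B\in\cL(U,Y)$) may be rewritten after applying $A^{-1}$ and differentiating in $t$. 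The point is to justify that the map $t\mapsto\Phi(t)x$ is absolutely continuous with values in $[\cD({A^*}^\epsilon)]'$ and that its derivative is $\Phi(t)Ax - \Phi(t)BB^*Px$. For the first term this is just the standard semigroup differentiability of $e^{A_Pt}$ on its generator's domain combined with Proposition~\ref{p:berliner-inclusion} and the commutation $A_P\Phi(t)=\Phi(t)A_P$; for the second term we invoke Proposition~\ref{p:heritage}, which tells us precisely that $\tau\mapsto e^{A_P\tau}B$, hence $\tau\mapsto e^{A_P\tau}BB^*Px$ (note $B^*Px\in U$ is a fixed vector once $x\in\cD(A^\epsilon)\supseteq\cD(A)$, by Theorem~\ref{t:bounded-gain}), lies in $L^p_\delta(0,\infty;[\cD({A^*}^\epsilon)]')$.

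The key steps, in order: (1) observe $x\in\cD(A)\subseteq\cD(A^\epsilon)$, so $B^*Px\in U$ is well defined by Theorem~\ref{t:bounded-gain}(ii); (2) for $x\in\cD(A_P)$ the formula \eqref{e:formula-for-derivative} holds in the strong sense in $Y$ by \eqref{e:dom-a_p-described} and semigroup theory, so it holds a fortiori in $[\cD({A^*}^\epsilon)]'$; (3) approximate a generic $x\in\cD(A)$ — more precisely one writes $x=A^{-1}Ax$ and uses that $\cD(A_P)$ is a core-type dense subset, or better, one directly verifies the Duhamel-type identity $\Phi(t)x-\Phi(s)x=\int_s^t[\Phi(\tau)Ax-\Phi(\tau)BB^*Px]\,d\tau$ by testing against $z\in\cD({A^*}^\epsilon)$, using that both integrands are genuine $[\cD({A^*}^\epsilon)]'$-valued $L^1_{loc}$ functions (the first because $\Phi(\tau)Ax\in C_b([0,\infty);Y)$, the second by Proposition~\ref{p:heritage}); (4) conclude that $t\mapsto\Phi(t)x$ is differentiable a.e.\ with the stated derivative; (5) read off \eqref{e:regularity-of-derivative} by adding the two regularity statements — $e^{A_P\cdot}Ax\in C_b\cap L^p_\delta$ for $\delta<\omega_1$ from \eqref{e:exponential_1}, and $e^{A_P\cdot}BB^*Px\in L^p_\delta(0,\infty;[\cD({A^*}^\epsilon)]')$, $p\in[1,1/\gamma)$, from Proposition~\ref{p:heritage}, so the sum lies in $L^p_\delta$ for $\delta\in(0,\omega\wedge\eta\wedge\omega_1)$ small and $p\in[1,1/\gamma)$; (6) the pointwise bound \eqref{e:corresponding-estimate} then follows by applying $A^{-\epsilon}$ to \eqref{e:formula-for-derivative}, estimating $\|A^{-\epsilon}\Phi(t)Ax\|_Y\le M_1e^{-\omega_1 t}\|x\|_{\cD(A)}$ via \eqref{e:exponential_1} and $\|A^{-\epsilon}\Phi(t)BB^*Px\|_Y\le\|A^{-\epsilon}e^{A_Pt}B\|_{\cL(U,Y)}\|B^*P\|_{\cL(\cD(A^\epsilon),U)}\|x\|_{\cD(A^\epsilon)}$.

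The main obstacle I expect is step~(3): making rigorous the passage from the strong differentiability known only on $\cD(A_P)$ — whose precise domain is not characterized — to differentiability on $\cD(A)$, which as the authors stress is \emph{not} a natural domain for the perturbed evolution. One cannot simply approximate $x\in\cD(A)$ by elements of $\cD(A_P)$ in the graph norm of $A_P$ (that norm is not controlled by the $\cD(A)$-norm). The resolution must be to work throughout at the level of the weak/duality formulation in $[\cD({A^*}^\epsilon)]'$: one tests the Duhamel identity against $z\in\cD({A^*}^\epsilon)$, where all terms make sense thanks to Proposition~\ref{p:heritage} and Proposition~\ref{p:berliner-inclusion} ($\cD(A_P^*)\subseteq\cD({A^*}^\epsilon)$ is what lets the adjoint semigroup act), and invokes the fact that a function whose difference quotients converge weakly in a reflexive (or dual) space, with the limit being locally integrable, is absolutely continuous with that derivative. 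It is exactly here that the weighted-space bookkeeping of Theorem~\ref{t:invertible-on-X_q} pays off, since it guarantees the needed summability of $e^{A_P\cdot}B$ with an exponential weight, and hence the global-in-time version \eqref{e:regularity-of-derivative} rather than merely a local one.
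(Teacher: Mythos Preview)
Your proposal identifies all the right ingredients (Theorem~\ref{t:bounded-gain} for $B^*Px\in U$, Proposition~\ref{p:heritage} for the regularity of $e^{A_P\cdot}B$, Proposition~\ref{p:berliner-inclusion} for the inclusion $\cD(A_P^*)\subseteq\cD({A^*}^\epsilon)$) and reaches the correct conclusions; steps~(5) and~(6) are exactly how the paper obtains \eqref{e:regularity-of-derivative} and \eqref{e:corresponding-estimate}.

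The paper's route through the obstacle you flag in step~(3) is, however, more direct than either of your suggestions (approximation or verifying a Duhamel identity by testing against $z\in\cD({A^*}^\epsilon)$). It does not pass through $\cD(A_P)$ at all. Instead, for \emph{any} $x\in Y$ and $z\in\cD(A_P^*)$ one has, by elementary adjoint-semigroup calculus,
\[
\frac{d}{dt}(e^{A_Pt}x,z)_Y=(x,A_P^*e^{A_P^*t}z)_Y=(e^{A_Pt}A_Px,z)_{[\cD(A_P^*)]',\cD(A_P^*)}\,,
\]
so $t\mapsto e^{A_Pt}x$ is automatically differentiable with values in the \emph{large} dual space $[\cD(A_P^*)]'$, with derivative $e^{A_Pt}A_Px$ (where $A_P$ is read as the extension $Y\to[\cD(A_P^*)]'$). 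For $x\in\cD(A)$ one then rewrites $A_Px=A(I-A^{-1}BB^*P)x=Ax-BB^*Px$ in $[\cD(A^*)]'$, and Proposition~\ref{p:heritage} shows $e^{A_Pt}(Ax-BB^*Px)$ actually lies in the \emph{smaller} space $[\cD({A^*}^\epsilon)]'\subset[\cD(A_P^*)]'$ (this inclusion being dual to Proposition~\ref{p:berliner-inclusion}). So the derivative, a~priori in $[\cD(A_P^*)]'$, in fact takes values in $[\cD({A^*}^\epsilon)]'$. This sidesteps any approximation in the graph norm of $A_P$ and any separate verification of the Duhamel identity: the differentiability in the big dual space is free, and the work is only in upgrading the target space. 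Your Duhamel-testing approach would also work, but note that testing against $z\in\cD({A^*}^\epsilon)$ directly is awkward because $\Phi(t)^*$ is not known to preserve $\cD({A^*}^\epsilon)$; testing against $z\in\cD(A_P^*)$, as the paper does, is what makes the computation immediate.
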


\begin{proof}
Let $x\in \cD(A)$. We preliminary note that $e^{A_Pt}x$ is strongly differentiable
as an element of the dual space $[\cD(A^*_P)]'$.
In fact, if $z\in \cD(A^*_P)$, then $z={A_P^*}^{-1}w$ with $w\in Y$ and we may compute
\begin{align*}  
& \frac{d}{dt}(e^{A_Pt}x,z)_{[\cD(A^*_P)]',\cD(A^*_P)}
=\frac{d}{dt}(A_P^{-1}e^{A_Pt}x,w)_Y = (e^{A_Pt}x,w)_Y
\nonumber \\
& \qquad =(e^{A_Pt}x,A_P^*z)_Y = (A_Pe^{A_Pt}x,z)_{[\cD(A^*_P)]',\cD(A^*_P)}
=(x,e^{A_P^*t}A^*_Pz)_Y
\nonumber \\
& \qquad
=(x,A^*_Pe^{A_P^*t}z)_Y=(e^{A_Pt}A_Px,z)_{[\cD(A^*_P)]',\cD(A^*_P)}
\nonumber \\
& \qquad = (e^{A_Pt}A[I-A^{-1}B\,B^*P]x,z)_{[\cD(A^*_P)]',\cD(A^*_P)}\,,
\end{align*}
which shows that when $x\in \cD(A)$
\begin{equation} \label{e:preliminary-formula-for-derivative}
\exists \;\frac{d}{dt}e^{A_Pt}x=e^{A_Pt}A[I-A^{-1}B\,B^*P]x\,, 
\quad \textrm{for a.e.~$t>0$,}
\end{equation}
as an element of $[\cD(A^*_P)]'$.
In addition, since $e^{A_Pt}Ax\in Y$, \eqref{e:preliminary-formula-for-derivative}
yields---still for $x\in \cD(A)$ and a.e.~$t>0$---, 
\begin{equation*} 
e^{A_Pt}A\,[A^{-1}B\,B^*P]x=e^{A_Pt}Ax-e^{A_Pt}A[I-A^{-1}B\,B^*P]x\in [\cD(A^*_P)]'\,.
\end{equation*}

Next, we observe that $x\in \cD(A)$ implies $x\in \cD(A^\epsilon)$ for all $\epsilon\in (0,1)$, which ensures $B^*Px\in U$ in view of Theorem~\ref{t:bounded-gain}.
Then $BB^*Px\in [\cD(A^*)]'$ and also $A\,A^{-1}BB^*Px\in [\cD(A^*)]'$, because 
$AA^{-1}$ coincides with the identity operator on $[\cD(A^*)]'$.
We now recall \eqref{e:inherited} from Proposition~\ref{p:heritage} which establishes
$e^{A_Pt}B\,B^*Px[\cD({A^*}^\epsilon)]'$, along with the regularity (in time) result
\begin{equation} \label{e:membership-1}
e^{\delta\cdot}e^{A_P\cdot}B\,B^*Px\in L^p(0,\infty;[\cD({A^*}^\epsilon)]')
\qquad \forall p\in [1,\frac1{\gamma})\,,
\end{equation}
valid for sufficiently small $\delta>0$.
On the other hand, since the semigroup $e^{A_Pt}$ is exponentially stable (with the estimate
\eqref{e:exponential_1}, we find
\begin{equation} \label{e:membership-2}
e^{\delta\cdot}e^{A_P\cdot}Ax\in L^s(0,\infty;Y) \qquad \forall s\in [1,\infty]\,,
\end{equation}
for any $\delta<\omega_1$.
In view of the memberships \eqref{e:membership-1} and \eqref{e:membership-2} we see that 
\begin{equation} \label{membership_combined}
e^{A_P\cdot}A[I-A^{-1}B\,B^*P]x=e^{A_P\cdot}Ax-e^{A_P\cdot}B\,B^*Px
\in L^p_{\delta}(0,\infty;[\cD({A^*}^\epsilon)]')\,,
\end{equation}
provided $\delta$ is sufficiently small.

Thus, \eqref{membership_combined} shows that the derivative in \eqref{e:preliminary-formula-for-derivative}---{\em a priori} taking values on $[\cD(A^*_P)]'$---coincides with 
$e^{A_Pt}Ax-e^{A_Pt}A^{-1}B\,B^*Px\in [\cD({A^*}^\epsilon)]'$ for a.e. $t>0$
(We recall that the inclusion $[\cD(A^*_P)]'\supset [\cD({A^*}^\epsilon)]'$---which
holds true provided $\epsilon<1-\gamma$---is the dual statement of \eqref{e:inclusion-of-domains}
of Proposition~\ref{p:berliner-inclusion}, whereas here $\epsilon$ can be taken arbitrarily small.)
Therefore, \eqref{e:formula-for-derivative} actually makes sense on $[\cD({A^*}^\epsilon)]'$ 
for a.e. $t>0$, and the validity of \eqref{e:regularity-of-derivative} 
is established, provided $\delta$ is sufficiently small.

Finally, natural estimates for each summand in the right hand side of 
\eqref{e:formula-for-derivative} produce the bound in \eqref{e:corresponding-estimate}, 
thus completing the proof.
\end{proof}

\subsection{Well-posedness of the ARE}
We begin with the statement of a Lemma which provides boundedness of the 
operators $A^*P$ and $A_P^*P$ on appropriate spaces ($\cD(A)$ and $\cD(A_P)$, 
respectively), properties which constitute a prerequisite for well-posedness of the ARE.
Although the proof is fairly standard, it is given below for the reader's convenience.

\begin{lemma} \label{l:riccati-op}
The following statements pertain to the optimal cost operator $P$.

\begin{enumerate}
\item[(i)]
$A^*P\in \cL(\cD(A_P),Y)$, with
\begin{equation} \label{e:identity-1}
A^*Px=-R^*R x-PA_Px \qquad \forall x\in \cD(A_P)\,;
\end{equation}

\item[(ii)] 
$A_P^*P\in \cL(\cD(A),Y)$, with
\begin{equation} \label{e:identity-2}
A_P^*Px=-R^*R x-PAx \qquad \forall x\in \cD(A)\,.
\end{equation}

\end{enumerate}

\begin{proof}
(i) Let $x\in \cD(A_P)$. Write the formula \eqref{e:optimal-cost-op} which defines the 
Riccati operator, that is 
\begin{equation*}
Px = \int_0^\infty e^{A^*t}R^*R \Phi(t) x\,dt\,,
\end{equation*}
and integrate by parts in $t$, thus obtaining
\begin{equation*}
\begin{aligned}
Px &= \int_0^\infty {A^*}^{-1}A^*e^{A^*t}R^*R \Phi(t) x\, dt 
\\[1mm]
 & = {A^*}^{-1}e^{A^*t}R^*R \Phi(t) x\Big|_{t=0}^{t=\infty} 
- \int_0^\infty {A^*}^{-1} e^{A^*t}R^*R \Phi(t) A_P x\,dt
\\[1mm]
 & = -{A^*}^{-1}R^*R x - {A^*}^{-1}\int_0^\infty e^{A^*t}R^*R \Phi(t) A_P x\,dt
\\[1mm]
 & = -{A^*}^{-1}R^*R x - {A^*}^{-1}PA_P x\,.
\end{aligned}
\end{equation*}
The above identity shows that $P$ maps $\cD(A_P)$ into $\cD(A^*)$, and also that 
\eqref{e:identity-1} holds actually in $Y$, 
since $R^*R x - PA_P x\in Y$ for any $x\in \cD(A_P)$; the boundedness of $A^*P$ immediately follows.   

\smallskip
\noindent
(ii) We write 
\beyy
\lefteqn{(Px,z)_Y = \Big(\int_0^\infty e^{A^*t}R^*R e^{A_P t} x\,dt,z\Big)_Y 
=\int_0^\infty \big(e^{A^*t}R^*R e^{A_P t} x,z\big)_Y\, dt}
\\[1mm]
& & = \int_0^\infty \big(x,e^{{A_P}^*t}R^*R e^{At}z\big)_Y\, dt 
= \Big(x,\int_0^\infty e^{{A_P}^*t}R^*R e^{At}z\,dt \Big)_Y = (x,P^*z)_Y\,.
\eeyy
Since we know that $P=P^*$, we deduce the alternative formula
\begin{equation} \label{e:alternative-formula}
Px=\int_0^\infty e^{A_P^* t}R^*R e^{At}x\,dt\,, \qquad x\in Y\,. 
\end{equation}
Thus, the proof of \eqref{e:identity-2} follows almost precisely as in the proof
of \eqref{e:identity-1}, bringing about the equality
\begin{equation*}
Px= -{A_P^*}^{-1}R^*R x - {A_P^*}^{-1}PA x\, \qquad x\in \cD(A)\,,
\end{equation*}
which confirms that $A_P^*P$ is a bounded operator on $\cD(A)$ with values
in $Y$, and hence \eqref{e:identity-2} holds in $Y$.

\end{proof}
\end{lemma}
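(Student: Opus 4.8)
The plan is to obtain both identities by the same device --- inserting a resolvent factor into the defining integral \eqref{e:definition-of-riccatiop} for $P$ and integrating by parts in time --- the two parts differing only in which semigroup is treated as the ``forward'' evolution. No smoothing hypothesis on $R$ and none of the singular--estimate machinery of Section~\ref{s:two} is needed: the lemma is a pure semigroup--plus--duality computation.

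For part (i), I would fix $x\in\cD(A_P)$ and start from $Px=\int_0^\infty e^{A^*t}R^*R\Phi(t)x\,dt$. Since $A^{-1}\in\cL(Y)$ by Assumption~\ref{h:ip1}, write $e^{A^*t}={A^*}^{-1}\frac{d}{dt}e^{A^*t}$, so the integrand equals $\frac{d}{dt}\bigl[{A^*}^{-1}e^{A^*t}\bigr]\,R^*R\Phi(t)x$. Integrating by parts on $(0,\infty)$ and using that $x\in\cD(A_P)$ makes $t\mapsto\Phi(t)x$ strongly $C^1$ with $\frac{d}{dt}\Phi(t)x=\Phi(t)A_Px$, while the exponential decays $\|e^{A^*t}\|_{\cL(Y)}\le Me^{-\omega t}$ and $\|\Phi(t)\|_{\cL(Y)}\le M_1 e^{-\omega_1 t}$ (Assumption~\ref{h:ip1} and Proposition~\ref{p:optimal-state-semigroup}) annihilate the boundary term at $+\infty$, one is left with the boundary contribution $-{A^*}^{-1}R^*Rx$ at $t=0$ and the integral $-{A^*}^{-1}\int_0^\infty e^{A^*t}R^*R\Phi(t)A_Px\,dt=-{A^*}^{-1}PA_Px$. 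Hence $Px=-{A^*}^{-1}\bigl(R^*Rx+PA_Px\bigr)$, which shows $Px\in\cD(A^*)$, gives \eqref{e:identity-1} in $Y$, and --- since $R^*R\in\cL(Y)$, $P\in\cL(Y)$ and $A_P$ is closed --- yields boundedness of $A^*P$ on $\cD(A_P)$ in the graph norm.

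For part (ii), I would first record the ``adjoint'' representation $Px=\int_0^\infty e^{A_P^*t}R^*Re^{At}x\,dt$. This follows by computing, for $x,z\in Y$, $(Px,z)_Y=\int_0^\infty (x,e^{A_P^*t}R^*Re^{At}z)_Y\,dt=(x,P^*z)_Y$, whence $P^*z=\int_0^\infty e^{A_P^*t}R^*Re^{At}z\,dt$, and then invoking the self-adjointness $P=P^*$ already established after \eqref{e:quadratic-form}. With this formula in hand, the very same integration--by--parts argument, now with $e^{A_P^*t}={A_P^*}^{-1}\frac{d}{dt}e^{A_P^*t}$ and with $t\mapsto e^{At}x$ strongly $C^1$ (as $x\in\cD(A)$) of derivative $e^{At}Ax$, gives $Px=-{A_P^*}^{-1}\bigl(R^*Rx+PAx\bigr)$, hence $Px\in\cD(A_P^*)$, the identity \eqref{e:identity-2}, and boundedness of $A_P^*P$ on $\cD(A)$.

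The one point requiring care --- though it is not deep --- is the legitimacy of the integration by parts: every operator in play must be kept absorbed into the resolvent ${A^*}^{-1}$ (resp.\ ${A_P^*}^{-1}$), so that all factors are genuine $Y$-valued, integrable, strongly differentiable functions of $t$, and one must verify the vanishing of the boundary term at $+\infty$, which is exactly where the exponential stability of both $e^{At}$ and $\Phi(t)$ is used. Beyond Assumption~\ref{h:ip1}, the only ingredients are the self-adjointness of $P$ and the description \eqref{e:dom-a_p-described} of the generator $A_P$.
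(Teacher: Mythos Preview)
Your proposal is correct and follows essentially the same route as the paper's proof: both parts proceed by inserting the resolvent factor ${A^*}^{-1}$ (respectively ${A_P^*}^{-1}$) and integrating by parts in $t$, with part~(ii) relying on the adjoint representation $Px=\int_0^\infty e^{A_P^*t}R^*Re^{At}x\,dt$ obtained from $P=P^*$. The only minor over-statement is your listing of \eqref{e:dom-a_p-described} among the needed ingredients; in fact the argument uses only standard semigroup differentiation on $\cD(A_P)$ and $\cD(A)$, exponential stability of both semigroups, and the self-adjointness of $P$.
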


We are finally ready to show that the optimal cost operator is a solution of the the Algebraic Riccati equation corresponding to Problem~\ref{p:problem-0}.

\begin{theorem}[Statement~S7.~of Theorem~\ref{t:main}] \label{t:are-wellposed} 
The optimal cost operator $P$ defined in \eqref{e:optimal-cost-op} 
satisfies the following regularity properties:
\begin{equation*}
P\in \cL(\cD(A_P),\cD(A^*))\cap \cL(\cD(A),\cD(A^*_P)\,.
\end{equation*}
Moreover, $P$ is a solution to the Algebraic Riccati equation 
\begin{equation}
\begin{split}
& (Px,Az)_Y+(Ax,Pz)_Y-(B^*Px,B^*Pz)_U+(Rx,Rz)_Z=0 
\label{e:riccati-eq} \\[1mm]
& \myspace\myspace\myspace\textrm{for any $x,z\in \cD(A)$,}
\end{split}
\end{equation}
which reads as 
\begin{equation}
\begin{split}
& (A^*Px,z)_Y+(x,A^*Pz)_Y-(B^*Px,B^*Pz)_U+(Rx,Rz)_Z=0 
\label{e:riccati-eq_2} \\[1mm]
& \myspace\myspace\myspace\textrm{when $x,z\in \cD(A_P)$.}
\end{split}
\end{equation}

\end{theorem}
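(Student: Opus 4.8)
The plan is to settle the two regularity memberships in one stroke and then to establish the Riccati identity, first in the form \eqref{e:riccati-eq_2} on $\cD(A_P)$ (where it is cleanest) and afterwards in the form \eqref{e:riccati-eq} on $\cD(A)$. The inclusions $P\in\cL(\cD(A_P),\cD(A^*))$ and $P\in\cL(\cD(A),\cD(A^*_P))$ are precisely what Lemma~\ref{l:riccati-op} delivers, together with the two working identities
\[
A^*Px=-R^*Rx-PA_Px\quad(x\in\cD(A_P)),\qquad A_P^*Px=-R^*Rx-PAx\quad(x\in\cD(A)),
\]
obtained there by integration by parts in the two representations $Px=\int_0^\infty e^{A^*t}R^*Re^{A_Pt}x\,dt=\int_0^\infty e^{A_P^*t}R^*Re^{At}x\,dt$ of the optimal cost operator.

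For \eqref{e:riccati-eq_2}, fix $x,z\in\cD(A_P)$. By Proposition~\ref{p:berliner-inclusion} both lie in $\cD(A^\epsilon)$, hence $B^*Px,B^*Pz\in U$ by Theorem~\ref{t:bounded-gain}, while $Pz\in\cD(A^*)$ by Lemma~\ref{l:riccati-op}(i). Pairing the first working identity with $z$ and using $P=P^*$ gives $(A^*Px,z)_Y=-(Rx,Rz)_Z-(A_Px,Pz)_Y$. Now, by Proposition~\ref{p:statement-5}, $A_Px=(A-BB^*P)x$ as an equality in $[\cD(A^*)]'$, the two summands being understood as elements of that extrapolation space; pairing it with $Pz\in\cD(A^*)$ and invoking the defining relations $\langle Ax,w\rangle_{[\cD(A^*)]',\cD(A^*)}=(x,A^*w)_Y$ and $\langle Bv,w\rangle_{[\cD(A^*)]',\cD(A^*)}=(v,B^*w)_U$, one gets $(A_Px,Pz)_Y=(x,A^*Pz)_Y-(B^*Px,B^*Pz)_U$. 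Substituting back yields exactly \eqref{e:riccati-eq_2}.

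For \eqref{e:riccati-eq} on $\cD(A)$ one must bear in mind that here only $Px\in\cD(A^*_P)\subseteq\cD({A^*}^\epsilon)$ is known and in general $Px\notin\cD(A^*)$, so $(Px,Az)_Y$ has to be read as a plain $Y$-inner product. I would argue by integration by parts on the explicit formula. Writing $(Ax,Pz)_Y=\int_0^\infty\big(\tfrac{d}{dt}e^{At}x,\,R^*Re^{A_Pt}z\big)_Y\,dt$ from the first representation of $Pz$, an integration by parts in $t$ produces the boundary term $-(Rx,Rz)_Z$ (both $e^{At}$ and $e^{A_Pt}$ decay exponentially) and the integral $-\int_0^\infty\big(e^{At}x,\,R^*R\tfrac{d}{dt}e^{A_Pt}z\big)_Y\,dt$. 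The crucial step is to replace $\tfrac{d}{dt}e^{A_Pt}z$ by $e^{A_Pt}Az-e^{A_Pt}B\,B^*Pz$, legitimate as an identity in $[\cD({A^*}^\epsilon)]'$ for a.e.\ $t$ by Corollary~\ref{c:differentiability-on-domain_A} (here one uses $R^*R\in\cL(\cD(A^\epsilon),\cD({A^*}^\epsilon))$ so that $R^*Re^{At}x\in\cD({A^*}^\epsilon)$ and the pairings with $[\cD({A^*}^\epsilon)]'$-valued terms make sense). The piece carrying $e^{A_Pt}Az$ recombines, via the other representation of $P$, into $-(Px,Az)_Y$, while the piece carrying $e^{A_Pt}B\,B^*Pz$ — using the regularity $e^{\delta\cdot}e^{A_P\cdot}B\in\cL(U,L^p(0,\infty;[\cD({A^*}^\epsilon)]'))$ of Proposition~\ref{p:heritage} to justify the exchange of $B^*$ with the $t$-integral, exactly as in the weak-limit argument of Theorem~\ref{t:bounded-gain} — collapses to $(B^*Px,B^*Pz)_U$. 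Collecting the three contributions gives \eqref{e:riccati-eq}; finally, on $\cD(A_P)$ one has $Px,Pz\in\cD(A^*)$, the extrapolation pairings become inner products, $(Px,Az)_Y=(A^*Px,z)_Y$, and \eqref{e:riccati-eq} indeed reduces to \eqref{e:riccati-eq_2}.

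I expect the main obstacle to be exactly the replacement $\tfrac{d}{dt}e^{A_Pt}z=e^{A_Pt}Az-e^{A_Pt}B\,B^*Pz$ together with the ensuing manipulation of $\int_0^\infty e^{A_P^*t}R^*Re^{At}x\,dt$ against $B$: since $\cD(A)$ is not invariant for $e^{A_Pt}$ and $Px$ is not smooth enough to carry $A^*$, the whole computation has to take place in $[\cD({A^*}^\epsilon)]'$, which is precisely why Section~\ref{s:are} first had to build Corollary~\ref{c:differentiability-on-domain_A} and Proposition~\ref{p:heritage} on top of Theorem~\ref{t:invertible-on-X_q}. Granted those, the remaining steps are routine: Hölder's inequality, Tonelli's theorem, and the exponential decay of $e^{At}$ and $e^{A_Pt}$.
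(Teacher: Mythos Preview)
Your proposal is correct and follows essentially the same route as the paper: the regularity $P\in\cL(\cD(A_P),\cD(A^*))\cap\cL(\cD(A),\cD(A^*_P))$ is lifted from Lemma~\ref{l:riccati-op}, the identity on $\cD(A_P)$ is obtained by expanding $A_Px=A(x-A^{-1}BB^*Px)$ against $Pz\in\cD(A^*)$ (the paper phrases this via the operator $\Gamma$, which is the same thing), and the identity on $\cD(A)$ relies on Corollary~\ref{c:differentiability-on-domain_A} to differentiate $e^{A_Pt}z$ in $[\cD({A^*}^\epsilon)]'$. The only cosmetic differences are that the paper treats the $\cD(A)$ case first, and obtains it by differentiating the constant $t\mapsto\int_t^\infty(Re^{A_P(\tau-t)}x,Re^{A(\tau-t)}z)_Z\,d\tau$ rather than by integrating $(Ax,Pz)_Y=\int_0^\infty(\tfrac{d}{dt}e^{At}x,R^*Re^{A_Pt}z)_Y\,dt$ by parts; the computations and the identification of the $B^*P$ term via the weak-limit argument of Theorem~\ref{t:bounded-gain} are identical.
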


\begin{proof}
The proof splits into two parts. First, we establish the validity of \eqref{e:riccati-eq} 
for $x,z\in \cD(A)$. Beside Lemma~\ref{l:riccati-op}, Corollary~\ref{c:differentiability-on-domain_A} will provide the crucial tool.
Next, we show the validity of \eqref{e:riccati-eq} with $x,z\in \cD(A_P)$.
We will use once again Lemma~\ref{l:riccati-op}, along with the intrinsic representation
\eqref{e:dom-a_p-described} of $A_P$ in terms of the operator $\Gamma$.

1. We recall the alternative representation \eqref{e:alternative-formula}
of the Riccati operator $P$ obtained in the previous Lemma, and for $x,z\in Y$ write 
the inner product $(Px,z)_Y$ as a function of $t$: 
\begin{equation} \label{e:follows-from-alternative}
(Px,z)_Y= \int_0^\infty (Re^{A_Pt}x,Re^{At}z)_Z\,dt
=\int_t^\infty (Re^{A_P(\tau-t)}x,Re^{A(\tau-t)}z)_Z\,d\tau.
\end{equation}
Taking now $x,z\in \cD(A)$, in view of \eqref{e:formula-for-derivative} 
of Corollary~\ref{c:differentiability-on-domain_A}, we differentiate both sides 
of the obtained formula with respect to $t$ to find 
\begin{align}
0 &=\frac{d}{dt}(Px,z)_Y= -(Rx,Rz)_Z 
\nonumber \\[1mm]
& \qquad - \int_t^\infty 
\big(e^{A_P(\tau-t)}A_Px,R^*Re^{A(\tau-t)}z\big)_{[\cD({A^*}^\epsilon)]',\cD({A^*}^\epsilon)}\,d\tau
\label{e:line-0}\\[1mm]
& \qquad - \int_t^\infty \big(Re^{A_P(\tau-t)}x,Re^{A(\tau-t)}Az\big)_Z\,d\tau
\nonumber \\[1mm]
& = -(Rx,Rz)_Z -\int_t^\infty 
\big(e^{A_P(\tau-t)}Ax,R^*Re^{A(\tau-t)}z\big)_Y\,d\tau
\nonumber 
\\[1mm]
& \qquad +\int_t^\infty 
\big(e^{A_P(\tau-t)}BB^*Px,R^*Re^{A(\tau-t)}z\big)_{[\cD({A^*}^\epsilon)]',\cD({A^*}^\epsilon)}
\,d\tau
\nonumber 
\\[1mm]
& \qquad 
- \int_t^\infty \big(Re^{A_P(\tau-t)}x,Re^{A(\tau-t)}Az\big)_Z\,d\tau
\nonumber 
\\[1mm]
& = -(Rx,Rz)_Z-(Ax,Pz)_Y + (B^*Px,B^*Pz)_U-(Px,Az)_Y\,,
\nonumber 
\end{align}
so that 
\begin{equation} \label{e:line-6} 
(Ax,Pz)_Y + (Px,Az)_Y - (B^*Px,B^*Pz)_U + (Rx,Rz)_Z=0, \; x,z\in\cD(A)\,.
\end{equation}
Note carefully that the integrand in 
\eqref{e:line-0}
makes sense {\em a priori} as a duality pairing on $\cD({A^*}^\epsilon)$
(which is summable because of \eqref{e:regularity-of-derivative}),
whereas all the summands in \eqref{e:line-6} actually make sense as inner products in the 
spaces $Y$, $U$ or $Z$, since in view of Theorem~\ref{t:bounded-gain} $B^*P$ 
is bounded ({\em a fortiori}) on $\cD(A)$. 
Therefore, $P$ solves the Algebraic Riccati Equation \eqref{e:riccati-eq} on $\cD(A)$.

\smallskip
2. We preliminarly recall that if $x\in \cD(A_P)$, then $A_Px = A\,(x+\Gamma x)$, where 
the operator $\Gamma$---which is defined by the equivalent weak limits \eqref{e:weak-limit_1} and \eqref{e:weak-limit_2}---{\em coincides} on $\cD(A_P)$ with $-A^{-1}B\,B^*P$; see
Remark~\ref{r:operator-gamma}.

Given now $x,z \in \cD(A_P)$, we compute 
\begin{align} \label{e:willbe-are}
& (A^*Px,z)_Y + (x,A^*Pz)_Y =\textrm{(by Lemma~\ref{l:riccati-op})}
\nonumber \\
& \qquad = -(Rx,Rz)_Y - (PA_Px,z)_Y + (x,A^*Pz)_Y =\nonumber\\
& \qquad= -(Rx,Rz)_Y - (PA(x+\Gamma x),z)_Y + (x,A^*Pz)_Y =\nonumber\\
& \qquad= -(Rx,Rz)_Y - (x+\Gamma x,A^*Pz)_Y + (x,A^*Pz)_Y =\nonumber\\ 
& \qquad= -(Rx,Rz)_Y - (\Gamma x,A^*Pz)_U\,.
\end{align}
On the other hand, we readily have 
\begin{equation} \label{e:quadratic-term} 
\begin{split}
(\Gamma x,A^*Pz)_Y &=-(A^{-1}B\,B^*Px,A^*Pz )_Y
\\
& = -(B\,B^*Px,Pz )_{[\cD(A^*)]',\cD(A^*)}= -(B^*Px,B^*Pz )_Y\,,
\end{split}
\end{equation}
with $B^*P\in \cL(\cD(A_P),U)$.
Thus, inserting \eqref{e:quadratic-term} in \eqref{e:willbe-are} we find that $P$ satisfies 
the Algebraic Riccati equation \eqref{e:riccati-eq_2} for any $x,z\in \cD(A_P)$,
with $B^*P\in \cL(\cD(A_P),U)$, thus concluding the proof. 
\end{proof}

\section{Illustrations} \label{s:examples}
In this section we give a significant illustration of the applicability of the 
infinite time horizon optimal control theory provided by Theorem~\ref{t:main}.
The boundary control problem under examination is the thermoelastic plate model 
studied in \cite{bucci-las-thermo} and \cite{abl-1}. 
Recall that this specific PDE problem constituted the prime motivation for the introduction 
of the novel class of control systems characterized by the abstract 
assumptions listed in \cite[Hypotheses~2.2]{abl-2}.

\subsection{A thermoelastic plate model with boundary thermal control}
We consider a classical (linear) PDE model for the determination of displacements and the temperature distribution in a thin plate; see \cite{lagnese-lions,lagnese}.
Let $\Omega$ be a bounded domain of $\mathbb{R}^2$, with smooth boundary 
$\Gamma$.
The PDE system comprises a Kirchhoff elastic equation for the vertical displacement 
$w(x,t)$ of the plate and the heat equation for the temperature distribution $\theta(x,t)$.
%
The plate equation is supplemented with {\em clamped} boundary conditions, whereas a control action on the temperature, represented by the function $u(x,t)$, is exercised through $\Gamma$. 
Thus, the PDE problem reads as follows (the constant $\rho$ is positive, $\nu$ denotes the unit outward normal to the curve $\Gamma$):
\begin{equation} \label{e:thermo-system_0}
\begin{cases}
w_{tt} - \rho \Delta w_{tt} + \Delta^2 w + \Delta\theta=0 
&  \mbox{in } \,\Omega \times (0,\infty)
\\[1mm] 
\theta_t - \Delta \theta - \Delta w_t = 0
& \mbox{in } \,\Omega \times (0,\infty)
\\[1mm] 
w=\frac{\partial w}{\partial\nu}=0 
\quad 
& \mbox{on } \Gamma \times (0,\infty)
\\[1mm] 
\theta = u 
& \mbox{on } \Gamma \times (0,\infty)
\\[1mm]
w(0,\cdot) = w^0, \; w_t(0,\cdot) = w^1; \quad
\theta(0,\cdot) = \theta^0 & \mbox{in } \Omega\,.
\end{cases}
\end{equation}
With \eqref{e:thermo-system_0} we associate the natural quadratic functional 
\begin{equation} \label{e:thermo-functional}
\int_0^\infty\!\!\!\int_\Omega 
\big(|\Delta w(x,t)|^2+ |\nabla w_t(x,t)|^2+|\theta(x,t)|^2\big)\, dx\,dt
+\,\int_0^\infty\!\!\!\int_\Gamma |u(x,t)|^2 \, ds\,dt
\end{equation}
to be minimized overall $u\in L^2(\Gamma \times (0,\infty))$, where $w$ solves
the boundary control problem \eqref{e:thermo-system_0}.
Note that when $u\equiv 0$ the functional \eqref{e:thermo-functional} is nothing but the integral 
over $(0,\infty)$ of the physical energy $E(t)$ of the system.

We recall that the finite time horizon optimal control problem for the controlled PDE system 
\eqref{e:thermo-system_0} was first studied in \cite{bucci-las-thermo}, and then 
fully solved according to the novel abstract theory set forth in \cite{abl-2}.
The preliminary PDE analysis carried out in \cite{bucci-las-thermo}, combined with the key boundary regularity result established in \cite[Theorem~1.1.]{abl-1} provided the proof.
\\
Here we aim to complete the study of the associated optimal control problems including the 
infinite time horizon case.
Specifically, we will show that the model under investigation fits as well into the abstract 
framework designed by Assumptions~\ref{h:ip1} and \ref{h:ip2}, thereby ensuring the applicability of Theorem~\ref{t:main}.

\smallskip
We already know that the boundary control problem \eqref{e:thermo-system_0} can be recast as an 
abstract control system of the form \eqref{e:state-eq} in the state variable 
$y=(w,w_t,\theta)$, with appropriate (and explicitly derived) dynamics and control 
operators $(A,B)$; see \cite[\S~2]{bucci-las-thermo} for all details.
A thorough analysis of the semigroup formulation of the uncontrolled problem is provided
by \cite{las-trig-thermo}, where the predominant hyperbolic character of the coupled 
PDE system (in the case $\rho>0$) was first pointed out.
The state and control spaces are given by 
\begin{equation*}
Y= H^2_0(\Omega)\times H^1_0(\Omega)\times L^2(\Omega)\,, \quad\qquad U=L^2(\Gamma)\,,
\end{equation*}
respectively. 
The plan is thus to check the complex of requirements contained in Assumptions~\ref{h:ip1}--\ref{h:ip2}.

\smallskip
\noindent
{\bf Verification of Assumption~\ref{h:ip1}. }
The validity of the basic Assumption~\ref{h:ip1} has been already discussed in 
\cite{bucci-las-thermo}, yielding the explicit statements of 
\cite[Proposition~2.1.]{bucci-las-thermo}.
We recall from \cite[Remark~2.2]{bucci-las-thermo} that well-posedness of the uncontrolled model 
was proved in \cite{las-trig-thermo}, while an easy computation provides boundedness of the linear operator $A^{-1}B$; see (2.23) in \cite{bucci-las-thermo}. 
Instead, the exponential stability of the underlying semigroup---by far a more challenging
issue---was established in \cite{avalos-las-trieste}.

\smallskip
\noindent
{\bf Verification of Assumptions~\ref{h:ip2}. }
We must verify that all the requirements listed in Assumption~\ref{h:ip2} are fulfilled.
Accordingly, we recall from \cite[\S~5]{abl-1} that given $z_0=(w^0,w^1,\theta^0)\in \cD(A^*)$,
one has
\begin{equation}
B^*e^{A^*t}z_0= \frac{\partial\theta}{\partial\nu}\Big|_\Gamma\,,
\end{equation}  
where now $\theta(x,t)$ is the thermal component of the solution $y(t)=(w(t),w_t(t),\theta(t))$ 
to an initial/boundary value problem which comprises the same thermoelastic system of 
\eqref{e:thermo-system_0}, yet with {\em homogeneous} boundary conditions and with a slightly 
different initial condition, that is $y(0)=(w^0,-w^1,\theta^0)$. 
The change of sign on a component of initial data is not influential, and it justifies the estimates performed on the solution to the original PDE problem \eqref{e:thermo-system_0} 
with $u\equiv 0$.

For the reader's convenience we record from \cite[\S~5]{abl-1} the essential steps of the computations leading to the sought decomposition \eqref{e:key-hypo} of $B^*e^{A^*t}y_0$, with suitable $F$ and $G$ that will be shown to satisfy the series of assumptions in \eqref{h:ip2}.

\noindent
{\bf 1. Notation. }
In the formulas below the symbol $-A_D$ denotes the (unbounded) linear operator which is the 
realization of the Laplace operator $\Delta$ in $H=L^2(\Omega)$, when supplemented with
(homogeneous) Dirichlet boundary condition, i.e. 
\begin{equation*}
A_Dw= -\Delta w\,, \quad w\in \cD(A_D)=H^2(\Omega)\cap H^1_0(\Omega)\,.
\end{equation*}
It is well known that $-A_D$ is the generator of a strongly continuous semigroup $e^{-A_Dt}$
in $H$, which moreover is analytic. 
The fractional powers $A_D^\alpha$ are well defined for all $\alpha\in (0,1)$, and there exist a
positive constant $\omega_L$ and constants $L_\alpha\ge 1$ such that the following estimates hold true:
\begin{equation} \label{e:parabolic-estimates}
\|A_D^{\alpha}e^{-A_Dt}\|_{\cL(H)}\le L_\alpha \frac{e^{-\omega_L t}}{t^\alpha}\qquad \alpha\in [0,1]\,.
\end{equation} 
The related (positive) operator $\cM=I+\rho A_D$ is employed in the abstract formulation 
of the elastic equation and hence will occur in the computations below.
Instead, $\cA$ will denote the realization of the bilaplacian $\Delta^2$ in $\Omega$
with {\em homogeneous} clamped boundary conditions.

Finally, let $D$ be the map which associates to any function in $L^2(\Gamma)$ its harmonic 
extension in $\Omega$. 
Classical trace theory (\cite{lions-magenes}) yields
\begin{equation*}
D \; \textrm{continuous\,:} \; L^2(\Gamma) \longrightarrow H^{1/2}(\Omega)\subset \cD(A_D^{1/4-\sigma})\,, 
\quad 0<\sigma< \frac14\,,
\end{equation*} 
which implies
\begin{equation}\label{e:fractional-bound}
A_D^{1/4-\sigma}D \quad \textrm{continuous\,:} \; L^2(\Gamma) \longrightarrow L^2(\Omega)\,, 
\quad 0<\sigma< \frac14\,.
\end{equation} 
In addition, the following well known result will be used throughout: 
\begin{equation}\label{e:d^*a}
D^*A_Dh= \frac{\partial h}{\partial\nu}\Big|_{\Gamma}\,, 
\qquad h\in H^{3/2+\sigma}(\Omega)\cap H^1_0(\Omega)\,,
\quad \sigma>0\,;
\end{equation}  
see, e.g., \cite[Lemma~3.1.1, p.~181]{las-trig-encyclopedia}.

\medskip
\noindent
{\bf 2. Explicit decomposition. }
We take the explicit expression of $\theta(t)$ and utilize \eqref{e:d^*a} 
to rewrite 
\begin{equation*}
\frac{\partial\theta}{\partial\nu}\equiv D^*A_D \theta(t)
= D^*A_D \Big[ e^{-A_Dt} \theta^0 - A_D\int_0^t e^{-A_D(t-s)}w_t(s)\,ds\Big]\,;
\end{equation*}
we next integrate by parts, obtaining first (since according to the clamped boundary conditions
$\frac{\partial w_t}{\partial\nu}=0$ on $\Gamma$): 
\begin{equation*}
\begin{split}
\frac{\partial\theta}{\partial\nu} 
& = \underbrace{D^*A_D e^{-A_Dt} \theta^0}_{F_1(t)y_0} 
+ \underbrace{D^*A_D e^{-A_Dt} w^1}_{F_2(t)y_0}
- \underbrace{D^*A_D\int_0^t e^{-A_D(t-s)}w_{tt}(s)\,ds}_{\psi(t;y_0)}.
\end{split}
\end{equation*}
Thus, we utilize the elastic equation $w_{tt}=-\cM^{-1}\cA w(s)+\cM^{-1}A_D \theta(s)$
to find 
\begin{equation}
\begin{aligned}
\psi(t;y_0) &= \underbrace{-D^*A_D\int_0^t e^{-A_D(t-s)}\cM^{-1}\cA w(s)\,ds}_{\psi_1(t;y_0)} 
\\ &\qquad\qquad
+ \underbrace{D^*A_D\int_0^t e^{-A_D(t-s)}\cM^{-1}A_D\theta(s)\,ds}_{-F_3(t)y_0}
\\
& = \underbrace{D^*A_D\int_0^t e^{-A_D(t-s)}\cM^{-1}A_D \Delta w(s)\,ds}_{\psi_{11}(t;y_0)}
\\  &\qquad\qquad
\underbrace{-D^*A_D\int_0^t e^{-A_D(t-s)}\cM^{-1}A_D\,\Delta w(s)\big|_{\Gamma}\,ds}_{\psi_{12}(t;y_0)}- F_3(t;y_0)
\end{aligned}
\end{equation}
where the further splitting of $\psi_1(t;y_0)=\psi_{11}(t;y_0)+\psi_{12}(t;y_0)$
is a consequence of 
\begin{equation*}
\cM^{-1}\cA w= -\cM^{-1} A_D\big(\Delta w-D\Delta w|_{\Gamma}\big)\,;
\end{equation*}
see \cite[\S5, formula (5.9)]{bucci-las-thermo}.
We will see that the integral $\psi_{11}(t;y_0)$ eventually contribute to the term $F(t)y_0$, while it was shown in \cite{bucci-las-thermo} that $\psi_{12}(t;y_0)$ does satisfy Assumption~\ref{h:ip2}(ii) and (iii)(a); hence, $\psi_{12}(t;y_0)$ is identified with $G(t)y_0$.

Summarizing, we found the following decomposition: 
\begin{equation*}
\frac{\partial\theta}{\partial\nu}= \sum_{i=1}^4 F_i(t)y_0 + G(t)y_0\,,
\end{equation*}
where 
\begin{align*}
F_1(t)y_0 &= D^*A_D e^{-A_Dt} \theta^0\,,
\quad
F_2(t)y_0 = D^*A_D e^{-A_Dt} w^1
\\
F_3(t)y_0 &= -D^*A_D\int_0^t e^{-A_D(t-s)}\cM^{-1}A_D\theta(s)\,ds\,,
\\
F_4(t)y_0 &=D^*A_D\int_0^t e^{-A_D(t-s)}\cM^{-1}A_D \Delta w(s)\,ds\,,
\end{align*}
and 
\begin{equation}\label{e:component-g}
G(t)y_0= D^*A_D\int_0^t e^{-A_D(t-s)}\cM^{-1}A_D\,\Delta w(s)\big|_{\Gamma}\,ds\,.
\end{equation}

\medskip
\noindent
{\bf 3. Estimates. }
We proceed to check the validity of the first among Assumptions~\ref{h:ip2} on either term
$F_i(t)y_0$, $i=1,\dots,4$.
Employing the various properties recorded above---in particular, using repeatedly 
the analytic estimates \eqref{e:parabolic-estimates}---and that 
\begin{equation*}
\|(w(t),w_t(t),\theta(t))\|_Y=\|e^{At}(w^0,-w^1,\theta^0)\|_Y\le Me^{-\omega t}\|y_0\|_Y\,,
\end{equation*}
we find 
\begin{align}
\|F_1(t)y_0\| & 
\le \|D^*A_D^{1/4-\sigma}\|\,L_{3/4+\sigma}\frac{e^{-\omega_L t}}{t^{3/4+\sigma}}\|\theta^0\|
\le C_{1,\sigma}\,t^{-3/4-\sigma} e^{-\omega_L t}\|y_0\|\,;
\label{e:bound-f_1}\\[2mm]
\|F_2(t)y_0\| & = \|D^*A_D^{1/2}\|\,\|e^{-A_Dt} A_D^{1/2}w^1\|
\nonumber \\[1mm]
& \le 
\|D^*A_D^{1/4-\sigma}\|\,L_{1/4+\sigma}\frac{e^{-\omega_L t}}{t^{1/4+\sigma}}\|A_D^{1/2}w^1\|
= C_{2,\sigma}\,t^{-1/4-\sigma}e^{-\omega_L t}\|w^1\|_{1,\Omega}
\nonumber \\[1mm]
& \le C_{2,\sigma}\,e^{-\omega_L t}\max\{t^{-3/4-\sigma},1\}\|y_0\|\,;
\label{e:bound-f_2}\\[2mm]
\|F_3(t)y_0\| & \le \|D^*A_D^{1/4-\sigma}\| L_{3/4+\sigma}\,
\int_0^t\frac{e^{-\omega_L(t-s)}}{(t-s)^{3/4+\sigma}} \|\cM^{-1}A_D\| \|\theta(s)\|\, ds
\nonumber \\[1mm]
& \le \|D^*A_D^{1/4-\sigma}\| L_{3/4+\sigma}\,\|\cM^{-1}A_D\|
\int_0^t\frac{e^{-\omega_L(t-s)}}{(t-s)^{3/4+\sigma}}\,M e^{-\omega s}\|y_0\|\,ds 
\nonumber \\ 
& \le C_{3,\sigma}\, t^{1/4-\sigma} e^{-\overline{\omega} t}\|y_0\|\,, 
\qquad \overline{\omega}=\min\{\omega,\omega_L\}\,;
\label{e:bound-f_3}\\[2mm]
\|F_4(t)y_0\| & \le \|D^*A_D^{1/4-\sigma}\| L_{3/4+\sigma}\,
\int_0^t\frac{e^{-\omega_L(t-s)}}{(t-s)^{3/4+\sigma}}\|\cM^{-1}A_D\| \|w(s)\|_{2,\Omega}\,ds
\nonumber \\[1mm]
& \|D^*A_D^{1/4-\sigma}\| L_{3/4+\sigma}\,
\int_0^t\frac{e^{-\omega_L(t-s)}}{(t-s)^{3/4+\sigma}}\|\cM^{-1}A_D\| \,M e^{-\omega s}\|y_0\|\,ds
\nonumber \\[1mm]
& \le C_{4,\sigma}\,t^{1/4-\sigma} e^{-\overline{\omega} t}\|y_0\|\,,
\qquad \overline{\omega}=\min\{\omega,\omega_L\}\,.
\label{e:bound-f_4}
\end{align}
Thus, if $\sigma$ is fixed, the bounds obtained in \eqref{e:bound-f_3} and \eqref{e:bound-f_4}
imply that 
\begin{equation}\label{e:bound-f_34}
\exists C, \,\eta>0: \quad \|F_i(t)y_0\| \le C\,e^{-\eta t}\|y_0\|\,,
\qquad i=1,2\,.
\end{equation}
(We note that, more precisely, for any $\eta<\min\{\omega,\omega_L\}$ there exists a constant 
$C_\eta>0$ such that the estimate in \eqref{e:bound-f_34} holds.
However, the formulation \eqref{e:bound-f_34} will suffice.)

Consequently, with a fixed $\sigma$ the estimates \eqref{e:bound-f_1} and \eqref{e:bound-f_2} combined with \eqref{e:bound-f_34} show that Assumption~\ref{h:ip2}(i) is satisfied with 
$\gamma>3/4$ and $\eta<\min\{\omega,\omega_L\}$.

\smallskip
Finally, that the component $G(t)y_0$ in \eqref{e:component-g} fulfils the requirements of Assumptions~\ref{h:ip2}(ii) and (iii)(a) was proved in \cite{bucci-las-thermo}.
The more challenging Assumption~\ref{h:ip2}(iii)---whose PDE counterpart is a suitable 
regularity result for the boundary traces of $\theta_t$---has been established in \cite{abl-1}.   

Therefore, provided the observation operator $R$ meets Assumption~\ref{h:ip2}(iii)(b),
all the hypotheses of Theorem~\ref{t:main} are satisfied; consequently, all its statements follow.
Indeed, in the present case the observation operator which occurs in the functional 
\eqref{e:thermo-functional} is the {\em identity}, 
and it is not difficult to ascertain that Assumption~\ref{h:ip2}(iii)(b) holds true
if $\epsilon$ is taken sufficiently small; 
see \cite[\S 3]{bucci-las-thermo} and \cite[Remark~2.5]{abl-2}.
   
\appendix
\section{Appendix} \label{a:lonely}
%
\begin{proof}[Proof of Proposition~\ref{p:statement-eight}]
We shall use the representation \eqref{e:phi} for $\Phi(t)x$.
Let $x\in\cD(A^\epsilon)$, with $\epsilon$ given by Assumption~\ref{h:ip2}(iii).
By standard semigroup theory we know that for any $\delta<\omega$, 
\begin{equation} \label{e:reg-start}
e^{A\cdot}x \in L_{\delta}^p(0,\infty;\cD(A^\epsilon)) \qquad \forall p\in [1,\infty]\,;
\end{equation}
then, owing to Assumption~\ref{h:ip2}(iii)(b), we obtain as well
\begin{equation*}
R^*Re^{A\cdot}x \in L_{\delta}^p(0,\infty;\cD({A^*}^\epsilon)) \qquad \forall p\in [1,\infty]\,.
\end{equation*}
In particular, $R^*Re^{A\cdot}x \in L_{\delta}^p(0,\infty;\cD({A^*}^\epsilon))$ with 
$p>1/(1-\gamma)$ and Proposition~\ref{p:stimeL*}(iv) establishes
\begin{equation*}
L^*R^*Re^{A\cdot}x \in L_{\delta}^\infty(0,\infty;U)\,.
\end{equation*}

We now make the following Claim.
\begin{claim} \label{claim-lambda}
The operator $\Lambda = I + L^*R^*RL$ admits a bounded inverse
\begin{equation*}
\Lambda^{-1}: L^\infty_\delta(0,\infty;U) 
\longrightarrow L^\infty_{\delta-\sigma_0}(0,\infty;U)\,,
\end{equation*}
with appropriate $\sigma_0\in (0,\delta)$.
\end{claim}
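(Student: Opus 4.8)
The plan is to climb the summability scale, in the spirit of Theorem~\ref{t:invertible-on-X_q} but now reaching $L^\infty$ at the top. First I would record the elementary embedding that makes the problem tractable: if $e^{\delta\cdot}h\in L^\infty(0,\infty;U)$ then, for every $\delta'\in(0,\delta)$, $\|e^{\delta' t}h(t)\|_U\le e^{-(\delta-\delta')t}\|h\|_{L^\infty_\delta(0,\infty;U)}$, so $h\in L^p_{\delta'}(0,\infty;U)$ for all finite $p$, with norm bounded by $\|h\|_{L^\infty_\delta}$ times a constant depending on $\delta-\delta'$. Fixing $\delta'\in(0,\delta)\cap(0,\omega\wedge\eta)$ small enough for Lemma~\ref{l:invertible-on-L2delta} to apply, there is a unique $g\in L^2_{\delta'}(0,\infty;U)$ with $\Lambda g=h$; and since $L^\infty_{\delta-\sigma_0}(0,\infty;U)\hookrightarrow L^2(0,\infty;U)$ for any $\sigma_0\in(0,\delta)$ while $\Lambda$ is injective there (coercivity), this $g$ is necessarily \emph{the} function we must produce as $\Lambda^{-1}h$, so only its regularity is at stake.

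Second, I would bootstrap on the identity $g=h-L^*R^*RLg$, keeping the weight $\delta'$ fixed and using the weighted analogues (Remark~\ref{r:extend-with-exponents}) of Propositions~\ref{p:stimeL} and \ref{p:stimeL*}: $L$ raises the time-summability exponent from $L^p_{\delta'}(0,\infty;U)$ to $L^r_{\delta'}(0,\infty;Y)$, $R^*R$ preserves it, $L^*$ raises it again, and $h$ lies in $L^p_{\delta'}(0,\infty;U)$ for every finite $p$. The exponents increase by a uniform amount at each step — via the very iteration $p_0=2$, $p_{n+1}=p_n/(1-(1-\gamma)p_n)$, with gap $p_{n+1}-p_n\ge 4(1-\gamma)/(2\gamma-1)$, used in the proof of Proposition~\ref{p:reg-uy} — so after finitely many steps $g\in L^r_{\delta'}(0,\infty;U)$ for every finite $r$, in particular for $r=q'$, continuously with respect to $h$.

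Third, I would cash this in through Proposition~\ref{p:L*R*R}: from $g\in L^{q'}_{\delta'}(0,\infty;U)$, the estimate \eqref{e:for-crucial} gives $R^*RLg\in L^\infty_{\delta'}(0,\infty;\cD({A^*}^\epsilon))$. Choosing now any $\delta''\in(0,\delta')$ we get $R^*RLg\in L^p_{\delta''}(0,\infty;\cD({A^*}^\epsilon))$ for all finite $p$, and taking $p>1/(1-\gamma)$ together with the weighted version of Proposition~\ref{p:stimeL*}(iv) yields $L^*R^*RLg\in L^\infty_{\delta''}(0,\infty;U)$. Hence $g=h-L^*R^*RLg\in L^\infty_{\delta''}(0,\infty;U)$, since $h\in L^\infty_\delta\subseteq L^\infty_{\delta''}$; setting $\sigma_0:=\delta-\delta''\in(0,\delta)$ gives the claimed inclusion, and boundedness of $\Lambda^{-1}:L^\infty_\delta(0,\infty;U)\to L^\infty_{\delta-\sigma_0}(0,\infty;U)$ follows by tracking the finitely many constants accumulated along the chain.

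The main obstacle is bookkeeping rather than a genuine analytic difficulty: one must manage the successive infinitesimal losses in the exponential weight ($\delta\to\delta'\to\delta''$), verify that each intermediate weight remains admissible both for Lemma~\ref{l:invertible-on-L2delta} and for the weighted versions of the regularity propositions (which forces $\delta',\delta''<\omega\wedge\eta$), and confirm that the bootstrap terminates after finitely many steps — guaranteed by the uniform exponent gap already exploited in the paper. I would also make explicit the (trivial but necessary) injectivity of $\Lambda$ on $L^\infty_\delta(0,\infty;U)$, obtained from the embedding into $L^2$ and coercivity of $\Lambda$ there, which is precisely what legitimizes identifying the constructed $g$ with $\Lambda^{-1}h$.
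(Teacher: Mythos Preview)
Your proposal is correct and follows essentially the same strategy as the paper: embed $L^\infty_\delta$ into $L^2_{\delta'}$ with a smaller weight, invoke Lemma~\ref{l:invertible-on-L2delta} for existence and uniqueness of $g$, then bootstrap the summability exponent via the identity $g=h-L^*R^*RLg$ together with the smoothing of $L$ and $L^*$ until reaching $L^\infty$. The paper organizes the bootstrap slightly differently---it first isolates the abstract fact that $(L^*R^*RL)^{n_1}$ maps $L^2_\theta$ into $L^\infty_\theta$ and then telescopes $g=\sum_{j=0}^{n_1-1}(-1)^j(L^*R^*RL)^j h+(L^*R^*RL)^{n_1}g$---and it reaches $L^\infty_\theta$ without your second weight drop $\delta'\to\delta''$ (since Proposition~\ref{p:stimeL*}(iv) already allows $p=\infty$), but these are cosmetic differences.
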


\smallskip
\noindent
Assuming that Claim~\ref{claim-lambda} is valid, it follows that 
\begin{equation*}
\Lambda^{-1} L^*R^*Re^{A\cdot}x \in L_{\theta}^\infty(0,\infty;U)
\quad \textrm{for some $\theta<\delta<\omega\wedge\eta$.}
\end{equation*}
Thus, using now Proposition~\ref{p:stimeL}(v) we see that 
\begin{equation} \label{e:reg-end}
L\Lambda^{-1} L^*R^*Re^{A\cdot}x \in L_{\theta}^\infty(0,\infty;\cD(A^\epsilon))
\subset L_{\beta}^p(0,\infty;\cD(A^\epsilon))
\end{equation}
for any $\beta$ such that $0<\beta<\theta$ and for all $p\in [2,\infty]$.

The regularity in \eqref{e:reg-end}, combined with the one in \eqref{e:reg-start},
shows that \eqref{e:goal_1} holds true.
Then, using the boundedness of the gain operator $B^*P$ on $\cD(A^\epsilon)$
in the feedback representation \eqref{e:feedback} of the optimal control, 
we easily obtain that \eqref{e:goal_1} implies \eqref{e:goal_2}, thus completing the proof.
\end{proof}

\begin{proof}[Proof of Claim~\ref{claim-lambda}]
We proceed pretty much in the same way as in the proof of Theorem~\ref{t:invertible-on-X_q}.
The tools which play a major role are, once more, the smoothing properties
of the operators $L$ and $L^*$, as well as the inclusions  
$L_{\alpha}^r(0,\infty;U)\subset L_{\beta}^s(0,\infty;U)$ for $\beta\in (0,\alpha)$
and $r<s$ (including $s=+\infty$).

\smallskip
\noindent
1. We seek to solve uniquely the equation
\begin{equation} \label{e:goal-2}
g + L^*R^*RLg = h\in L_{\delta}^\infty(0,\infty;U)\,,
\end{equation}
with arbitrary $\delta \in (0,\omega\wedge\eta)$.
An elementary calculation shows that $h\in L_{\theta}^2(0,\infty;U)$ as well, 
for any $0<\theta <\delta$.
%
Thus, since by Lemma~\ref{l:invertible-on-L2delta} $\Lambda$ is boundedly invertible in 
$L_{\theta}^2(0,\infty;U)$ provided $\theta$ is taken sufficiently small, 
there exists a unique function $g\in L_{\theta}^2(0,\infty;U)$ such that 
\eqref{e:goal-2} is satisfied.
We will show that---possibly taking a smaller $\theta$---in fact 
\begin{equation} \label{e:goal-3}
g\in L_{\theta}^\infty(0,\infty;U)\,.
\end{equation}

\smallskip
\noindent
2. Preliminarly, we prove that 
\begin{equation} \label{e:intermediate-goal-2}
\exists n_1\in \mathbb{N}: \quad  (L^*R^*RL)^{n_1} \; 
\textrm{continuous: $L_{\theta}^2(0,\infty;U)\to L_{\theta}^\infty(0,\infty;U)$}\,.
\end{equation}
Let $f\in L_{\theta}^2(0,\infty;U)$. 
Since the successive application of the operators $L$ and $L^*$ improve the
(time regularity) summability exponents, there exists an integer $\overline{n}$ such that
\begin{equation*} 
(L^*R^*RL)^{\overline{n}}f\in L_{\theta}^{q'}(0,\infty;U)\,,
\end{equation*}
which in view of Proposition~\ref{p:stimeL}(v) implies 
$e^{\theta\cdot}L(L^*R^*RL)^{\overline{n}}f\in C_b([0,\infty);\cD(A^\epsilon))$.
Then, $e^{\theta\cdot}R^*RL(L^*R^*RL)^{\overline{n}}f\in C_b([0,\infty);\cD({A^*}^\epsilon))$
and by Proposition~\ref{p:stimeL*}(v)
\begin{equation*} 
(L^*R^*RL)^{\overline{n}+1}f= L^*R^*RL(L^*R^*RL)^{\overline{n}}f
\in L_{\theta}^\infty(0,\infty;U)\,.
\end{equation*}
Thus, \eqref{e:intermediate-goal-2} is satisfied with $n_1= \overline{n}+1$.

\smallskip
\noindent
3. We return to \eqref{e:goal-2} and argue as follows. 
The solution $g$ of \eqref{e:goal-2} is such that
\begin{equation}\label{e:goal-2-equivalent} 
g = h-L^*R^*RLg\,;
\end{equation}
then, if $n_1=1$ in \eqref{e:intermediate-goal-2}, since $h$ and $L^*R^*RLg$ in
\eqref{e:goal-2-equivalent} both belong to $L_{\theta}^\infty(0,\infty;U)$, we immediately
obtain \eqref{e:goal-3}.
The case $n_1>1$ is treated as follows.

We apply the operator $L^*R^*RL^*$ to both members of \eqref{e:goal-2-equivalent},
thus obtaining  
\begin{equation} \label{e:passo-2}
L^*R^*RL^*g  = L^*R^*RL^*h-(L^*R^*RL^*)^2 g\,.
\end{equation}
Observe now that $L^*R^*RL^*h\in L_{\theta}^\infty(0,\infty;U)$, as the given regularity of $h$ is maintained by the application of $L^*R^*RL$; if, in addition, $n_1=2$ in 
\eqref{e:intermediate-goal-2}, the identity \eqref{e:passo-2} yields 
$L^*R^*RL^*g\in L_{\theta}^\infty(0,\infty;U)$.
Using the obtained regularity for $L^*R^*RL^*g$ in \eqref{e:goal-2-equivalent},
we find that $g\in L_{\theta}^\infty(0,\infty;U)$, that is \eqref{e:goal-3}.

If, instead, $n_1>2$, inserting \eqref{e:passo-2} into \eqref{e:goal-2-equivalent} we find
the novel identity
\begin{equation} \label{e:passo-3}
g=h-L^*R^*RL^*h+(L^*R^*RL^*)^2 g\,.
\end{equation}
To pinpoint the regularity of the summand $(L^*R^*RL^*)^2 g$, we return to 
\eqref{e:passo-2} and apply the operator $L^*R^*RL^*$ to both members. 
Plugging the obtained expression for $(L^*R^*RL^*)^2 g$ into \eqref{e:passo-3} yields 
\begin{equation*}
g=\sum_{j=0}^2 (-1)^j (L^*R^*RL^*)^j h+ (L^*R^*RL^*)^3 g\,.
\end{equation*}
The iteration of this argument eventually yields  
\begin{equation*}
g=\sum_{j=0}^{n_1-1} (-1)^j (L^*R^*RL^*)^j h+ (L^*R^*RL^*)^{n_1} g\,,
\end{equation*}
where both summands on the right hand side belong to $L_{\theta}^\infty(0,\infty;U)$,
which establishes \eqref{e:goal-3} and concludes the proof.

\end{proof}


\end{document}